\documentclass[11pt,a4paper]{amsart}

\usepackage{amsthm}

\usepackage[all]{xy}

\allowdisplaybreaks

\usepackage[utf8]{inputenc}
\usepackage{bbm,amssymb,mathtools,mathrsfs}
\usepackage[usenames,dvipsnames]{xcolor}


\usepackage[colorlinks,linkcolor=BrickRed,citecolor=OliveGreen,urlcolor=black,hypertexnames=true]{hyperref}
\usepackage[margin=3.5cm]{geometry}

\usepackage{enumerate}

\makeatletter
\renewcommand{\tocsubsection}[3]{%
  \indentlabel{\@ifnotempty{#2}{\hspace*{2.3em}\makebox[2.8em][l]{%
    \ignorespaces#1 #2.\hfill}}}#3}
\makeatother

\setcounter{tocdepth}{2}


\numberwithin{equation}{section}

\DeclareMathOperator{\id}{id}
\DeclareMathOperator{\Sym}{Sym}
\DeclareMathOperator{\tr}{tr}

\DeclareMathOperator{\Trd}{Trd}
\DeclareMathOperator{\Real}{Re}

\DeclareMathOperator{\sign}{sign}
\DeclareMathOperator{\Nil}{Nil}

\DeclareMathOperator{\Int}{Int}
\DeclareMathOperator{\Span}{Span}

\newcommand{\N}{\mathbb{N}}

\newcommand{\Q}{\mathbb{Q}}
\newcommand{\R}{\mathbb{R}}
\newcommand{\C}{\mathbb{C}}

\newcommand{\CS}{\mathcal{S}}

\newcommand{\CA}{\mathscr{A}}
\newcommand{\CB}{\mathscr{B}}
\newcommand{\CQ}{\mathscr{Q}}
\newcommand{\CM}{\mathscr{M}}
\newcommand{\CN}{\mathscr{N}}
\newcommand{\CP}{\mathscr{P}}

\newcommand{\s}{\sigma}
\newcommand{\ox}{\otimes}
\newcommand{\x}{\times}

\newcommand{\ve}{\varepsilon}
\newcommand{\vt}{\vartheta}

\newcommand{\ovl}{\overline}
\newcommand{\ul}{\underline}

\newcommand{\qf}[1]{\langle #1\rangle}

\newcommand{\uhr}{\upharpoonright}

\newtheorem{thm}{Theorem}[section]
\newtheorem{lemma}[thm]{Lemma}

\newtheorem{cor}[thm]{Corollary}
\newtheorem{prop}[thm]{Proposition}

\theoremstyle{definition}
\newtheorem{defi}[thm]{Definition}

\theoremstyle{remark}
\newtheorem{rem}[thm]{Remark}

\def\m#1{\begin{pmatrix}#1\end{pmatrix}}

\def\of{\text{of}}

\def\rcf{\text{rcf}}

\def\CSA{\text{CSA}}
\def\CSAI{\text{CSA-I}}

\def\OCSAI{\text{OCSA-I}}
\def\OCSAOI{\text{OCSA-OI}}
\def\OCSASI{\text{OCSA-SI}}
\def\OCSAUI{\text{OCSA-UI}}

\def\HS{\text{HS}}
\def\PSD{\text{PSD}}
\def\NSD{\text{NSD}}

\newcommand{\lF}{\ul{F}}
\newcommand{\ls}{\ul{\s}}
\newcommand{\lCP}{\ul{\CP}}
\newcommand{\lP}{\ul{P}}
\newcommand{\lTrd}{\ul{\Trd}}

\newcommand{\la}{\ul{a}}



\begin{document}

\title[Quantifier elimination for algebras with involution]{Model completeness and quantifier elimination for (ordered) central simple algebras
with involution}

\author{V. Astier}

\begin{abstract}
  We show that the theories of some (ordered) central simple algebras with
  involution over real closed fields are model-complete or admit quantifier
  elimination, and characterize positive cones in terms of morphisms into models
  of some of these theories.
\end{abstract}

\maketitle

\tableofcontents

\section{Introduction and preliminaries}

In a series of papers (\cite{au14, au15, au18, au20, au22}) the author and T.
Unger investigated some properties of central simple algebras with involution
that are linked to orderings on the base field and have strong similarities to
classical notions in real algebra: Signatures of hermitian forms, ``ideals'' in
the Witt group, ``orderings'' (positive cones) and valuations (gauges) on the
central simple algebra with involution.

It is therefore natural to wonder if some model-theoretic properties, similar to
the ones of ordered fields, could also be found in (ordered) central simple
algebras with involution. This paper is a first investigation in this direction.

\subsection{Algebras with involution}\label{awi}
All fields in this paper will have characteristic different from $2$.
Our main reference for central simple algebras with involutions is \cite{BOI},
and we simply recall what will be needed in the paper.

By central simple algebra over a field $K$ we mean an $K$-algebra $A$ with 1 that is
finite-dimensional over $K$ and such that $K = Z(A)$. Such an algebra is
isomorphic to a matrix algebra $M_\ell(D)$, for a unique $\ell \in \N$ and a
$K$-division algebra $D$ that is unique up to $K$-isomorphism. A splitting field $L$
of $A$ is an extension $L$ of $K$ such that $A \ox_K L \cong M_m(L)$ for some
$m$. Such a splitting field always exists (for instance the algebraic closure of
$K$), and $\deg A := m = \sqrt{\dim_{K} A}$ is called the degree of
$A$.\medskip

If $A$ is a ring and $\s$ is an involution on $A$, we denote by
\[\Sym(A,\s) := \{a \in A \mid \s(a) = a\}\]
the set of symmetric elements of $A$.\medskip

In this paper, $F$ will always denote a field. By a central simple algebra with
involution over $F$ we mean a pair $(A,\s)$ where $A$ is a finite-dimensional
$F$-algebra with 1, whose centre $Z(A)$ is a field, and where $\s$ is an
involution on $A$ such that $F = Z(A) \cap \Sym(A,\s)$.  Note that $\s$ is then
$F$-linear, and that $[Z(A):F] \le 2$. We call $F$ the base field of $(A,\s)$.

The involution $\s$ is said to be of the first kind if $F = Z(A)$, and of the
second kind if $[Z(A):F] = 2$. A finer classification of involutions is given by
their type:

Involutions of the first kind can have two types, described as follows (with
the notation $m := \deg A$): orthogonal if $\dim_F
\Sym(A,\s) = \dfrac{m(m+1)}{2}$, or symplectic if $\dim_F \Sym(A,\s) =
\dfrac{m(m-1)}{2})$, cf. \cite[Proposition~2.6]{BOI}. Involutions of the second
kind are also called of unitary type.\medskip

Recall that, by the Skolem-Noether theorem:
\begin{prop}[{\cite[Propositions~2.7 and 2.18]{BOI}}]\label{SK-invo}
  If $\s$ and $\gamma$ are two $F$-linear involutions on $A$ and are of the same
  kind, then there is $a \in A^\x$ such that $\s = \Int(a) \circ \gamma$.
\end{prop}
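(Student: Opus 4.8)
The plan is to reduce the statement to the Skolem--Noether theorem by composing the two involutions. First I would note that since $\s$ and $\gamma$ are anti-automorphisms of $A$, the composite $\s \circ \gamma$ is an ($F$-algebra) automorphism of $A$, and that since $\gamma$ is an involution we have $\gamma^{-1} = \gamma$; hence any decomposition $\s \circ \gamma = \Int(a)$ with $a \in A^\x$ yields at once $\s = \Int(a) \circ \gamma$, which is the desired conclusion. So everything comes down to showing that the automorphism $\s \circ \gamma$ is inner.

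Next I would analyse the action of $\s \circ \gamma$ on $Z(A)$, distinguishing the two cases according to the common kind of $\s$ and $\gamma$. If both are of the first kind, then $Z(A) = F$ and $\s$, $\gamma$ fix $Z(A)$ pointwise, so $\s \circ \gamma$ is an automorphism of the central simple $F$-algebra $A$. If both are of the second kind, then $[Z(A):F] = 2$ and neither $\s$ nor $\gamma$ can fix $Z(A)$ pointwise: if $\s$ did, then $Z(A) \subseteq \Sym(A,\s)$, whence $Z(A) = Z(A) \cap \Sym(A,\s) = F$, a contradiction. Thus $\s|_{Z(A)}$ and $\gamma|_{Z(A)}$ are both the unique nontrivial $F$-automorphism of the quadratic extension $Z(A)/F$, so $(\s \circ \gamma)|_{Z(A)}$ is its square, i.e. the identity. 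Moreover, a short computation shows that $\s \circ \gamma$, being the composite of two $Z(A)/F$-semilinear maps, is $Z(A)$-linear; hence it is a $Z(A)$-algebra automorphism of the central simple $Z(A)$-algebra $A$.

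Finally, in either case Skolem--Noether (over $F$ in the first case, over $Z(A)$ in the second, i.e. the version asserting that an automorphism of a central simple algebra fixing its centre pointwise is inner) gives $a \in A^\x$ with $\s \circ \gamma = \Int(a)$, completing the argument. I do not expect a genuine obstacle here; the only point needing a little care is the second-kind case, where one must check both that $\s \circ \gamma$ restricts to the identity on $Z(A)$ and that it is $Z(A)$-linear (not merely $F$-linear), so that the centre-fixing form of Skolem--Noether actually applies.
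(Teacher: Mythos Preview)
Your argument is correct and is precisely the standard reduction to Skolem--Noether. Note, however, that the paper does not give its own proof of this proposition: it is stated as a citation of \cite[Propositions~2.7 and 2.18]{BOI}, and the argument you outline is essentially the one found there.
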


We will be particularly interested in central simple algebras with involution
whose base field $F$ is real closed. As recalled above, they are (up to
isomorphism) of the form $M_\ell(D)$ where $D$ is a finite-dimensional division
algebra over $F$. Since $F$ is real closed $D$ is one of $F$, $F(\sqrt{-1})$, or
$(-1,-1)_F$ (where $(-1,-1)_F$ denotes the quaternion algebra over $F$ with
usual basis $\{1,i,j,k\}$ such that $i^2=j^2=-1$ and $ij=-ji=k$). We will denote
the canonical $F$-linear involutions on $F(\sqrt{-1})$ and $(-1,-1)_F$ by $-$ in
both cases.\medskip

\begin{rem}\label{F-div-alg}
  Recall that, for a field $F$:
  \begin{itemize}
    \item $F(\sqrt{-1})$ is a field if and only if $a^2+b^2=0$ implies $a=b=0$ for
      every $a,b \in F$, if and only if the quadratic form $\qf{1,1}$ is
      anisotropic.
    \item $(-1,-1)_F$ is a division algebra if and only if $a^2+b^2+c^2+d^2=0$
      implies $a=b=c=d=0$ for every $a,b,c,d \in F$, if and only if the quadratic
      form $\qf{1,1,1,1}$ is anisotropic.
  \end{itemize}
\end{rem}

Let $F$ be a field such that $(D, \vt) \in \{(F, \id), (F(\sqrt{-1}),-),
((-1,-1)_F,-)\}$ is a division algebra with involution. Let $n \in \N$.
The involution $\vt^t$ on $M_n(D)$ is
\[\begin{cases}
  \text{orthogonal} & \text{if } (D, \vt) = (F, \id) \\
  \text{symplectic} & \text{if } (D, \vt) = ((-1,-1)_F, -) \\
  \text{unitary} & \text{if } (D, \vt) = (F(\sqrt{-1}), -).
\end{cases}\]
In any of these three situations, $\PSD(M_n(D), \vt^t)$ will denote the set of
symmetric positive-definite matrices. We will often simply write $\PSD$ if the
algebra is clear from the context.

\subsection{Model-theoretic notation}
We will use the standard notation for $L$-structures (see for instance
\cite[p~8]{marker02}), but will not distinguish between a structure and its base
set: If $\CM$ is an $L$-structure, we will denote the base set of $\CM$ also by
$\CM$.

If $S$ is a symbol in a language $L$ and $\CM$ is an $L$-structure, $S^\CM$ will
denote the interpretation of $S$ in $\CM$.

We will work with algebras with involution, and will be interested in various
maps and relations that are naturally considered in this context, for instance
the involution (often denoted $\s$), the base
field (often denoted $F$), the reduced trace map (denoted $\Trd$) etc. Our
languages will contain symbols that will be interpreted by such an
involution, field, reduced trace map\ldots In order to make them easily recognizable, we
will use the same symbols in the language, but underlined (so: $\ls$, $\lF$,
$\lTrd$, etc).

Frequently, an algebra with involution $(A,\s)$ will be model of a theory
whose language contains more symbols than just $\s$. We will still denote it by
$(A,\s)$, or even by $A$, when no confusion will seem likely to arise.

In this paper, ``formula'' means first-order formula and ``theory'' means
first-order theory.

\subsection{Axiomatization of finite-dimensional central simple algebras}

\begin{lemma}\label{ax-csa}
  Let $F$ be a field and let $A$ be a finite-dimensional $F$-algebra whose
  centre is a field. Then $A$ is a central simple algebra over its centre if and
  only if $A$ is von Neumann regular.
\end{lemma}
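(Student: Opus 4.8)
The plan is to reduce the statement to Wedderburn--Artin theory, using that a finite-dimensional $F$-algebra is automatically left (and right) Artinian.

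For the implication from left to right, suppose $A$ is a central simple algebra over its centre $K := Z(A)$, so that by the structure recalled above $A \cong M_\ell(D)$ for some $\ell \in \N$ and some $K$-division algebra $D$. It then suffices to observe that any matrix algebra $M_\ell(D)$ over a division ring is von Neumann regular: given $M \in M_\ell(D)$, Gaussian elimination over $D$ produces invertible $P, Q \in M_\ell(D)^\x$ with $PMQ = E$, where $E$ is the diagonal idempotent with $1$ in its first $r$ diagonal positions ($r = \operatorname{rank} M$) and $0$ elsewhere; since $E^3 = E$, the matrix $X := QEP$ satisfies $MXM = M$. (Alternatively one may simply quote that every semisimple ring is von Neumann regular.)

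For the converse, suppose $A$ is von Neumann regular. The first step is to check that the Jacobson radical $J(A)$ is zero: if $a \in J(A)$ and $x \in A$ satisfies $axa = a$, then $xa \in J(A)$, hence $1 - xa \in A^\x$, and multiplying the identity $a(1-xa) = 0$ on the right by $(1-xa)^{-1}$ gives $a = 0$. Since $A$ is finite-dimensional over $F$ it is left Artinian, and a left Artinian ring with trivial Jacobson radical is semisimple; by Wedderburn--Artin we thus obtain $A \cong \prod_{i=1}^{k} M_{n_i}(D_i)$ for some $k \ge 1$ and some division algebras $D_i$. Passing to centres, $Z(A) \cong \prod_{i=1}^{k} Z(D_i)$; but a direct product of two or more rings with identity $\ne 0$ has nontrivial idempotents and hence is never a field, so the hypothesis that $Z(A)$ is a field forces $k = 1$. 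Therefore $A \cong M_{n_1}(D_1)$, which, being finite-dimensional over its centre $Z(D_1) = Z(A)$, is a central simple algebra over its centre.

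I do not expect a genuine obstacle here; the argument is just an assembly of standard facts. The one point worth isolating is the role of finite-dimensionality: for a general von Neumann regular ring one only obtains that $A$ is a subdirect product of simpler pieces, and it is the Artinian property --- available for free from finite-dimensionality --- that upgrades $J(A) = 0$ to full semisimplicity, after which the hypothesis that the centre is a field finishes the job by collapsing the number of Wedderburn factors to one.
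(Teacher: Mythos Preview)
Your proof is correct and follows essentially the same route as the paper: both directions go through semisimplicity and Wedderburn--Artin, with the centre-is-a-field hypothesis collapsing the product to a single factor. The only cosmetic difference is that the paper quotes ``von Neumann regular $+$ Noetherian $\Rightarrow$ semisimple'' directly from \cite[Theorem~4.25]{lam91}, whereas you first show $J(A)=0$ by hand and then invoke ``Artinian $+$ $J=0$ $\Rightarrow$ semisimple''; these are interchangeable standard facts.
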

\begin{proof}
  Assume that $A$ is a central simple algebra. We know that $A \cong M_n(D)$ for
  some $n \in \N$ and some $Z(A)$-division algebra $D$. Then $A$ is semisimple,
  and thus von Neumann regular (see for instance \cite[Corollary 4.24]{lam91}).

  Conversely, let $A$ be von Neumann regular. By \cite[Theorem~4.25]{lam91} $A$
  is semisimple (it is Noetherian since $\dim_F A < \infty$), so is a finite
  product of matrix rings over division rings (\cite[Theorem~3.5]{lam91}). Since
  its centre is a field and thus not a product of more than one field, $A$ is a
  single matrix ring over a division ring, so is central simple.
\end{proof}

For $m \in \N$ we define the following two theories:
\begin{align*}
  \CSA_m :=&\text{ the theory of von Neumann regular rings} \\
  &\cup \{\text{the centre is a field}\}\\
  &\cup \{\text{the dimension over the centre is } m\},
\end{align*}
in the language $L_R$ of rings and, in the language $L_\CSAI := L_R \cup \{\lF,
\ls\}$:
\begin{align*}
  \CSAI_m :=& \text{ the theory of von Neumann regular rings } \\
    & \cup \{\text{the centre is a field, $\ls$ is an involution}\} \\
    & \cup \{\text{$\lF$ is the field of all symmetric elements in the centre}\} \\
    & \cup \{\text{the dimension over $\lF$ is $m$}\}
\end{align*}
($\CSA$ stands for central simple algebra and $\CSAI$ for central simple algebra
with involution).
Lemma~\ref{ax-csa} immediately gives:
\begin{cor}\label{axiom-csa}
  \begin{enumerate}
    \item The models of $\CSA_m$ are exactly the central simple algebras of
      dimension $m$ over their centres.
    \item The models of $\CSAI_m$ are exactly the central simple algebras with
      involution over $F$ of dimension $m$ over $F$ (where $F$ denotes the
      interpretation of $\lF$ in the model).
  \end{enumerate}
\end{cor}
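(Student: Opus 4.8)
The plan is to unwind the definitions of the two theories and apply Lemma~\ref{ax-csa}; the two parts have the same shape, so I would handle them in parallel. First I would note that each informal axiom listed in the definitions of $\CSA_m$ and $\CSAI_m$ is indeed expressible by a first-order sentence in the relevant language (von Neumann regularity is $\forall x\,\exists y\; xyx=x$; ``the centre is a field'' says that every nonzero element of the definable subset $Z(A)$ has an inverse; ``the dimension over the centre is $m$'' asserts the existence of $a_1,\dots,a_m$ that are $Z(A)$-linearly independent and $Z(A)$-span $A$, both of which are first-order since $Z(A)$ is definable; ``$\ls$ is an involution'' and ``$\lF$ is the field of symmetric elements of the centre'' are equally routine), so the two theories make sense.

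For part~(1): a structure $\CM$ in $L_R$ is a model of $\CSA_m$ precisely when $\CM$ is a von Neumann regular ring, $Z(\CM)$ is a field, and $\dim_{Z(\CM)}\CM=m$. The last clause in particular forces $\CM$ to be finite-dimensional over $Z(\CM)$, so Lemma~\ref{ax-csa} applies with the lemma's ``$F$'' taken to be $Z(\CM)$: under these hypotheses $\CM$ is von Neumann regular if and only if $\CM$ is central simple over its centre. Hence the models of $\CSA_m$ are exactly the rings that are central simple over their centre and of dimension $m$ over it. For the converse direction, if $A$ is such an algebra then $Z(A)$ is a field by definition, $\dim_{Z(A)}A=m$, and $A$ is von Neumann regular again by Lemma~\ref{ax-csa}, so $A\models\CSA_m$.

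For part~(2): a structure $(A,\s)$ in $L_\CSAI$ is a model of $\CSAI_m$ precisely when $A$ is a von Neumann regular ring, $Z(A)$ is a field, $\s:=\ls^A$ is an involution of $A$, the interpretation $F:=\lF^A$ equals $Z(A)\cap\Sym(A,\s)$, and $\dim_F A=m$. Since $F\subseteq Z(A)$, the last clause again forces $A$ to be finite-dimensional over $Z(A)$, so Lemma~\ref{ax-csa} (with its ``$F$'' equal to $Z(A)$) gives that $A$ is central simple over $Z(A)$; together with the conditions that $\s$ is an involution and $F=Z(A)\cap\Sym(A,\s)$, this is exactly the statement that $(A,\s)$ is a central simple algebra with involution over $F$ of dimension $m$ over $F$ (recall that $\s$ is then automatically $F$-linear and $[Z(A):F]\le 2$, as noted in Section~\ref{awi}). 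The converse direction is immediate from the definition of central simple algebra with involution and Lemma~\ref{ax-csa}. There is no genuine obstacle here; the only point requiring a little care is to observe that the dimension clause is precisely what supplies the finite-dimensionality hypothesis of Lemma~\ref{ax-csa}, and that in the involutorial case the lemma is applied with the centre $Z(A)$ — not the distinguished base field $\lF^A$ — in the role of the ground field.
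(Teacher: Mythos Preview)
Your proposal is correct and is exactly the approach the paper takes: the paper simply states that the corollary follows immediately from Lemma~\ref{ax-csa}, and you have spelled out the details of that immediate deduction. Your extra care about first-order expressibility and about which field plays the role of the lemma's ground field is welcome but not strictly needed (in part~(2) you could equally apply Lemma~\ref{ax-csa} with the lemma's ``$F$'' taken to be $\lF^A$ itself, since $\dim_{\lF^A}A=m$ already gives finite-dimensionality).
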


\subsection{The reduced trace, the $*$ operation, and words of matrices}

We will consider two different traces.
\begin{enumerate}
  \item The usual matrix trace on $M_n(D)$, where $D$ is a division algebra:
    \[\tr((a_{ij})_{i,j=1, \ldots, n}) = \sum_{i=1}^n a_{ij}.\]
  \item The reduced trace, $\Trd_A$, where $A$ is a central simple algebra
    (see \cite[Section~1A]{BOI}; we will often simply write $\Trd$ instead of
    $\Trd_A$ if the algebra is clear from the context).

    If $A$ is a central simple algebra over $K := Z(A)$, the reduced trace is a
    $K$-linear map from $A$ to $K$.
    It is obtained by extending the scalars to a splitting field $L$ of $A$,
    (i.e., $K \subseteq L$ and there is an isomorphism of $L$-algebras $f:
    A \ox_{K} L \rightarrow M_m(L)$ for some $m$), and then by taking the
    usual trace in $M_m(L)$. It can be shown that the result does not depend on the
    choice of $L$ or of the isomorphism $f$ (in particular the reduced trace is
    invariant under isomorphisms of $F$-algebras). So, for $a \in A$:
    \[\Trd_A(a) := \tr(f(a \ox 1)).\]
\end{enumerate}

\begin{rem}\label{equal-field}
  These two traces produce different results for a central simple algebra of the
  form $M_\ell(D)$ where $D$ is a division algebra that is not a field (the
  reduced trace will have values in $Z(M_\ell(D)) = Z(D)$), but are equal if $D$
  is a field since there is no need to extend scalars to split the algebra.
\end{rem}

\begin{lemma}\label{axiom-Trd}
  Let $A$ be a central simple algebra over $K$ and let $f : A \rightarrow K$ be
  $K$-linear such that $f(xy) = f(yx)$ for every $x,y \in A$ and $f(1) = \deg
  A$. Then $f = \Trd_A$.
\end{lemma}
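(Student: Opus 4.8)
The plan is to reduce to the split case by extending scalars, where the statement becomes the familiar characterization of the ordinary matrix trace. First I would fix a splitting field $L$ of $A$ and an isomorphism $f_0 : A \ox_K L \to M_m(L)$, where $m = \deg A$. Both $\Trd_A$ and the given map $f$ extend $L$-linearly to maps $A \ox_K L \to L$; transporting along $f_0$, it suffices to show that any $L$-linear functional $g : M_m(L) \to L$ with $g(xy) = g(yx)$ for all $x,y$ and $g(1) = m$ must equal the usual trace $\tr$. The reason the reduction is legitimate is that $\Trd_A$ is by definition $\tr \circ f_0$ (restricted to $A \ox 1$), and its $L$-linear extension to all of $A \ox_K L$ is again $\tr \circ f_0$; similarly the hypotheses $f(xy)=f(yx)$ and $f(1)=m$ are preserved under $L$-linear extension (using that $1 \ox 1$ is the identity and that $K$-bilinearity of the trace-like identity extends to $L$-bilinearity).

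The core computation is then the split case. Let $E_{ij}$ denote the matrix units in $M_m(L)$. For $i \ne j$ we have $E_{ij} = E_{ik}E_{kj}$ and $E_{kj}E_{ik} = 0$ (choosing $k \ne i,j$ when $m \ge 3$, or handling $m \le 2$ directly), so $g(E_{ij}) = 0$. For the diagonal units, $E_{ii} = E_{ij}E_{ji}$ and $E_{jj} = E_{ji}E_{ij}$ give $g(E_{ii}) = g(E_{jj})$ for all $i,j$; call this common value $c$. Then $g(1) = g(\sum_i E_{ii}) = mc = m$ forces $c = 1$. By $L$-linearity $g = \sum_{i,j} g(E_{ij}) \cdot (\text{coordinate}_{ij})$, which is exactly $\sum_i (\text{coordinate}_{ii}) = \tr$. (When $m = 1$ there is nothing to prove, and when $m = 2$ one checks $E_{12} = E_{12}E_{22}$, $E_{22}E_{12} = E_{12}$ — so that argument fails — but instead uses $E_{12} = E_{11}E_{12}$ and $E_{12}E_{11} = 0$; the off-diagonal vanishing still goes through.)

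Finally I would note that uniqueness of $\Trd_A$ among such functionals does not depend on the choice of $L$ or $f_0$, which is already part of the setup recalled in the excerpt, so no separate verification is needed there. The only mildly delicate point — the step I expect to be the main obstacle to a fully rigorous write-up — is the bookkeeping in the scalar extension: one must be careful that the condition $f(xy) = f(yx)$, which a priori is only assumed for $x, y \in A$, really does propagate to all $x, y \in A \ox_K L$ after extending $f$ to be $L$-linear. This follows because both sides, as functions of $(x,y)$, are $K$-bilinear on $A \times A$ and hence their $L$-bilinear extensions to $(A \ox_K L) \times (A \ox_K L)$ agree; spelling this out on elementary tensors $a \ox \lambda$, $b \ox \mu$ is routine but is the one place where care is required.
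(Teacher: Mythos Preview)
Your proposal is correct and follows essentially the same approach as the paper: reduce to the split case by extending scalars to a splitting field, then verify on the matrix units $E_{ij}$ that any $L$-linear trace-like functional with $g(1)=m$ must be the usual trace. The only cosmetic differences are that the paper handles the off-diagonal vanishing uniformly via $f(E_{rs}) = f(E_{rr}E_{rs}) = f(E_{rs}E_{rr}) = 0$ (no case distinction on $m$ needed) and obtains $f(E_{rr}) = f(E_{ss})$ by conjugating with permutation matrices rather than via $E_{ij}E_{ji}$, but these are minor variations on the same argument.
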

\begin{proof}
  Let $L$ be a splitting field of $A$. A direct verification shows that $(f \ox
  \id)(xy) = (f \ox \id)(yx)$ for every $x,y \in A \ox_K L \cong M_m(L)$ (for
  some $m \in \N$), and we still have $(f \ox \id)(1) = \deg A = \deg M_m(L)$.
  Therefore, by definition of the reduced trace, it suffices to show that
  $f \ox \id$ is the reduced trace on $M_n(L)$, i.e., it suffices to show the
  result for $A = M_n(L)$.

  For $r,s \in \{1, \ldots, m\}$, let $E_{rs}$ be the matrix with $1$ at entry
  $(r,s)$ and $0$ elsewhere. We have, for $r \not = s$:
  \[f(E_{rs}) = f(E_{rr}E_{rs}) = f(E_{rs}E_{rr}) = f(0) = 0.\]
  Furthermore, $f(xyx^{-1}) = f(y)$ by hypothesis, for every $y \in M_m(L)$ and
  $x \in M_m(L)^\x$. Therefore, if $P_{r,s}$ is the permutation matrix corresponding to the
  transposition $(r\ s)$, we have $P_{r,s}E_{rr}{P_{r,s}}^{-1} = E_{ss}$, so that
  \[f(E_{ss}) = f(P_{r,s}E_{rr}{P_{r,s}}^{-1}) = f(E_{rr}).\]
  In particular $f(I_m) = \sum_{r=1}^m E_{rr} = mE_{ss}$ for any $s \in \{1,
  \ldots, m\}$.

  Since $f(I_m) = m = \sum_{r=1}^m f(E_{rr})$, it follows that $f(E_{rr}) = 1$
  for every $r$, proving that $f = \Trd$ on $\{E_{rs}\}_{r,s=1, \ldots, m}$,
  which is a basis of $M_m(L)$ over $L$.
\end{proof}

\begin{lemma}\label{trace-morphism}
  Let $A$ and $B$ be two $F$-algebras such that $A$ is central simple over
  $Z(A)$ and there is an isomorphism of $F$-algebras $f : A \rightarrow B$.
  Then, for every $a \in A$, $f(\Trd_A(a)) = \Trd_B(f(a))$.
\end{lemma}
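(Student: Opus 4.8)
The plan is to reduce everything to the characterization of the reduced trace given in Lemma~\ref{axiom-Trd}. Since $f : A \to B$ is an isomorphism of $F$-algebras and $A$ is central simple over $Z(A)$, the image $B$ is also central simple, and $f$ restricts to an $F$-algebra isomorphism $Z(A) \to Z(B)$; in particular $\deg B = \deg A$ (the degree is read off from the dimension over the centre, which $f$ preserves). Now consider the map $g := f^{-1} \circ \Trd_B \circ f : A \to Z(A)$. I would first check that $g$ takes values in $Z(A)$: indeed $\Trd_B$ has values in $Z(B)$ and $f^{-1}(Z(B)) = Z(A)$.

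Next I would verify that $g$ satisfies the three hypotheses of Lemma~\ref{axiom-Trd}, viewing $A$ as central simple over $K := Z(A)$. First, $g$ is $K$-linear: it is $F$-linear as a composite of $F$-linear maps, and for $\lambda \in Z(A)$ and $a \in A$ we have $g(\lambda a) = f^{-1}(\Trd_B(f(\lambda)f(a))) = f^{-1}(f(\lambda)\Trd_B(f(a))) = \lambda\, g(a)$, using that $f(\lambda) \in Z(B)$ and that $\Trd_B$ is $Z(B)$-linear. Second, for $x,y \in A$ we have $g(xy) = f^{-1}(\Trd_B(f(x)f(y))) = f^{-1}(\Trd_B(f(y)f(x))) = g(yx)$, using the trace property of $\Trd_B$ together with the multiplicativity of $f$. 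Third, $g(1) = f^{-1}(\Trd_B(1_B)) = f^{-1}(\deg B) = \deg B = \deg A$, where I use that $\Trd_B(1_B) = \deg B$ (a standard property of the reduced trace) and that $\deg B = \deg A$ as noted above, and that $f^{-1}$ fixes $F$ hence fixes the integer $\deg B$.

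By Lemma~\ref{axiom-Trd} it follows that $g = \Trd_A$, i.e.\ $f^{-1}(\Trd_B(f(a))) = \Trd_A(a)$ for every $a \in A$, which upon applying $f$ gives the desired identity $f(\Trd_A(a)) = \Trd_B(f(a))$. I do not expect any genuine obstacle here; the only points requiring a word of care are that $B$ is again central simple (so that $\Trd_B$ is even defined) and that $f$ preserves the degree — both of which are immediate from the preservation of the centre and of dimensions over the centre under an $F$-algebra isomorphism.
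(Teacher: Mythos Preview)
Your proof is correct, and it takes a genuinely different route from the paper's. The paper factors the map through the scalar extension $A' := A \otimes_{Z(A)} Z(B)$ (formed via $f_0 := f \uhr Z(A)$): the map $A' \to B$, $a \otimes z \mapsto f(a)z$, is a $Z(B)$-algebra isomorphism, so $\Trd_{A'}(a \otimes 1) = \Trd_B(f(a))$; the remaining identity $f_0(\Trd_A(a)) = \Trd_{A'}(a \otimes 1)$ is then quoted from \cite[Theorem~4.3 e)]{saltman99}. Your argument instead pulls $\Trd_B$ back along $f$ and invokes the paper's own characterization Lemma~\ref{axiom-Trd} directly. The payoff of your approach is that it is entirely self-contained within the paper (no external reference needed) and slightly shorter; the paper's approach, on the other hand, isolates the base-change step $\Trd_A \leadsto \Trd_{A'}$ as a separate ingredient, which is conceptually cleaner if one already has that fact available and would adapt more readily to non-isomorphism situations.
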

\begin{proof}
  Let $f_0 := f \uhr Z(A) : Z(A) \rightarrow Z(B)$. Using the action of $Z(A)$
  on $Z(B)$ induced by $f_0$, we can build the map $g : A' := A \ox_{Z(A)} Z(B)
  \rightarrow B$, $a \ox z \mapsto f(a) z$, which is an isomorphism of
  $Z(B)$-algebras (it is injective since $A'$ is simple, and $\dim_{Z(B)} A' =
  \dim_{Z(A)} A = \dim_{Z(B)} B$). In particular, if $a \in A$ then $\Trd_{A'}(a
  \ox 1) = \Trd_B(f(a))$.

  We now consider the morphism of rings $f_0 : Z(A) \rightarrow Z(B)$. By
  \cite[Theorem~4.3 e)]{saltman99} we have $f_0(\Trd_A (a)) = \Trd_{A'}(a \ox
  1)$. The result follows.
\end{proof}

We consider more closely the case of the reduced trace on $M_n(D)$, where $D =
(-1,-1)_F$ is a quaternion division algebra over $F$. A splitting extension of
$D$ is given by $L:=F(\sqrt{-1})$, and the map
\[f_0: D \ox_F L \rightarrow M_2(L),\]
\[i \ox 1 \mapsto \m{0&1\\-1&0},\  
    j \ox 1 \mapsto \m{0&\sqrt{-1}\\\sqrt{-1}&0}, \ 
    k \ox 1 \mapsto \m{\sqrt{-1}&0\\0&-\sqrt{-1}}\]
is an isomorphism of $L$-algebras. It induces an isomorphism $f: M_\ell((-1,-1)_F)
\rightarrow M_{2\ell}(L)$ such that, if $a = (a_{rs})$, then $f(a) =
(f_0(a_{rs}))$. Therefore,
\[\Trd(a) = \sum_{r=1}^\ell \tr(f_0(a_{rr})).\]

Writing $a_{rr} = u_{r,1} + iu_{r,2} + ju_{r,3} + ku_{r,4}$ with $u_{r,1},
\ldots, u_{r,4} \in F$, we have
\begin{align*}
  \tr(f_0(a_{rr})) &= u_{r,1} \tr(I_2) + u_{r,2}\tr(f(i)) + u_{r,3}\tr(f(j)) +
  u_{r,4}\tr(f(k)) \\
    &= 2u_{r,1}.
\end{align*}
Thus:
\begin{equation}\label{reduce-real-trace}
  \Trd(a) = 2 \sum_{i=1}^n u_{r,1} = 2 \Real(\tr(a)) = 2\tr(\Real(a)).
\end{equation}\medskip

We now introduce the $*$ operation, following \cite{lee49} and \cite{wiegmann62}
(we use Wiegmann's notation $*$ from \cite{wiegmann62} since we will mostly
refer to this paper; Lee denotes it by the function $f$): Let $F$ be a field
such that $(-1,-1)_F$ is a division algebra, and let $M \in M_n((-1,-1)_F)$,
written as $M = M_1 + jM_2$ where $M_1, M_2 \in M_n(F(\sqrt{-1}))$. We define:
\[M^* := \m{M_1 & -\ovl{M_2} \\ M_2 & \ovl{M_1}}\]
From \cite[Section~4]{lee49}, we have
\begin{prop}\label{*-morphism}
  The map $X \mapsto X^*$ is an injective morphism of rings with involution from
  $(M_n((-1,-1)_F), -^t)$ to $(M_{2n}(F(\sqrt{-1})), -^t)$.
\end{prop}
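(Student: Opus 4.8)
The plan is to work through the standard description of $D:=(-1,-1)_F$ as a $2$-dimensional right vector space over $L:=F(\sqrt{-1})$, identified with the subfield $F(i)\subseteq D$, with basis $\{1,j\}$, so that everything reduces to the three identities $zj=j\ovl{z}$ (for $z\in L$), $j^2=-1$, and $\ovl{j}=-j$ for the canonical involution. Passing to matrices, each $M\in M_n(D)$ is written uniquely as $M=M_1+jM_2$ with $M_1,M_2\in M_n(L)$ (where $j$ abbreviates $jI_n$), and the first identity upgrades to $A(jI_n)=(jI_n)\ovl{A}$ for every $A\in M_n(L)$, with $\ovl{A}$ the entrywise conjugate. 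Additivity of $X\mapsto X^*$ is immediate from uniqueness of the decomposition; $I_n=I_n+j\cdot 0$ gives $I_n^*=I_{2n}$; and injectivity is clear since $M^*=0$ forces $M_1=M_2=0$ (alternatively, $M_n(D)$ is simple so any nonzero ring morphism is injective).

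For multiplicativity I would expand $MN=(M_1+jM_2)(N_1+jN_2)$ and push each $j$ to the left past the $M_i$ using $A(jI_n)=(jI_n)\ovl{A}$ and $j^2=-1$; this yields $MN=(M_1N_1-\ovl{M_2}N_2)+j(M_2N_1+\ovl{M_1}N_2)$, so that the four blocks of $(MN)^*$ are $M_1N_1-\ovl{M_2}N_2$, $-\ovl{M_2N_1+\ovl{M_1}N_2}$, $M_2N_1+\ovl{M_1}N_2$, $\ovl{M_1N_1-\ovl{M_2}N_2}$. A plain $2\times 2$ block multiplication of $M^*=\m{M_1 & -\ovl{M_2}\\ M_2 & \ovl{M_1}}$ with the analogous matrix for $N$ is then compared termwise with these; the matches reduce to elementary simplifications such as $\ovl{\ovl{M_1}N_2}=M_1\ovl{N_2}$, giving $(MN)^*=M^*N^*$.

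For compatibility with the involutions — call $\sigma$ the conjugate-transpose on $M_n(D)$ and $\tau$ the conjugate-transpose on $M_{2n}(L)$ — I would first note that transposing $M=M_1+jM_2$ gives $M^t=M_1^t+jM_2^t$, and that entrywise quaternion conjugation, which by the decomposition sends $z_1+jz_2$ to $\ovl{z_1}-jz_2$, then gives $\sigma(M)=\ovl{M^t}=\ovl{M_1}^t-jM_2^t$. Feeding this into the $*$-construction and simplifying (using $\ovl{M_2^t}=\ovl{M_2}^t$ and $\ovl{\ovl{M_1}}^t=M_1^t$) produces $\m{\ovl{M_1}^t & \ovl{M_2}^t\\ -M_2^t & M_1^t}$; a direct computation of $\tau(M^*)=\ovl{(M^*)^t}$ block by block gives the same matrix, so $\sigma(M)^*=\tau(M^*)$, as required for a morphism of rings with involution.

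The only real difficulty here is bookkeeping: three distinct operations — transposition of matrices, entrywise conjugation in $L$, and quaternion conjugation in $D$ — interact with the $2\times 2$ block layout, and it is easy to drop a sign or a bar. I would keep this under control by channeling every step through the single matrix identity $A(jI_n)=(jI_n)\ovl{A}$ together with the rule $\ovl{z_1+jz_2}=\ovl{z_1}-jz_2$, which turns each of the three verifications into a short mechanical computation.
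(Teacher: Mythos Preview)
Your proof is correct. The paper itself does not give a proof but simply cites \cite[Section~4]{lee49}; your direct verification via the decomposition $M=M_1+jM_2$ and the identity $Aj=j\ovl{A}$ is precisely the standard computation underlying that reference, and all your block calculations check out (multiplicativity, the involution compatibility $\sigma(M)^*=\tau(M^*)$, and injectivity).
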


Considering $a \in M_n((-1,-1)_F)$ written as $a = a_1 + ja_2$ with $a_1, a_2 \in
M_n(F(\sqrt{-1}))$, a direct computation shows that
\[\tr(a^*) = \tr(a_1) + \tr(\ovl{a_1}) = 2\tr(\Real(a)).\]
Putting this together with \eqref{reduce-real-trace}, we obtain:
\begin{lemma}\label{Trd-tr}
  Let $F$ be a field such that $(-1,-1)_F$ is a division algebra, and let $a \in
  M_n((-1,-1)_F)$. Then
  \[\Trd(a) = \tr(a^*).\]
\end{lemma}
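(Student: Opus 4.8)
The plan is to avoid unwinding a splitting twice and instead pin down $\tr(a^*)$ as the reduced trace by the characterization in Lemma~\ref{axiom-Trd}. Set $A := M_n((-1,-1)_F)$, so $Z(A) = F$, and consider the map $f : A \to F$, $f(a) := \tr(a^*)$. One first notes that $f$ does land in $F$: writing $a = a_1 + ja_2$ with $a_1, a_2 \in M_n(F(\sqrt{-1}))$, the block form $a^* = \m{a_1 & -\ovl{a_2} \\ a_2 & \ovl{a_1}}$ gives $\tr(a^*) = \tr(a_1) + \tr(\ovl{a_1})$, and $z + \bar z \in F$ for every $z \in F(\sqrt{-1})$. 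Then I would verify the three hypotheses of Lemma~\ref{axiom-Trd}. For $F$-linearity: for $\lambda \in F$ one has $(\lambda a)_1 = \lambda a_1$, $(\lambda a)_2 = \lambda a_2$ and $\ovl{\lambda} = \lambda$, so $(\lambda a)^* = \lambda a^*$; additivity of $*$ is immediate from its definition. For the trace property $f(xy) = f(yx)$: by Proposition~\ref{*-morphism} the map $X \mapsto X^*$ is a ring morphism, so $(xy)^* = x^* y^*$, and $\tr$ on $M_{2n}(F(\sqrt{-1}))$ is a trace, whence $\tr(x^*y^*) = \tr(y^*x^*) = \tr((yx)^*)$. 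For the normalization: $I_n = I_n + j\cdot 0$ gives $I_n^* = \m{I_n & 0 \\ 0 & I_n} = I_{2n}$, so $f(1) = 2n = \sqrt{\dim_F A} = \deg A$. Lemma~\ref{axiom-Trd} then forces $f = \Trd_A$, which is exactly the assertion.

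An alternative — essentially the route already prepared in the text just before the statement — is to combine \eqref{reduce-real-trace}, which reads $\Trd(a) = 2\tr(\Real(a))$, with the identity $\tr(a^*) = 2\tr(\Real(a))$. For the latter, the block computation above gives $\tr(a^*) = \tr(a_1) + \tr(\ovl{a_1}) = 2\Real(\tr(a_1))$, and if $a_{rr} = u_{r,1} + iu_{r,2} + ju_{r,3} + ku_{r,4}$ with the $u_{r,s} \in F$, the decomposition $a_{rr} = (u_{r,1}+iu_{r,2}) + j(u_{r,3}-iu_{r,4})$ identifies $\Real((a_1)_{rr}) = u_{r,1}$, so $\Real(\tr(a_1)) = \sum_r u_{r,1} = \tr(\Real(a))$, and the two displayed quantities agree.

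Neither route involves a genuine obstacle; the only thing requiring care is bookkeeping, namely checking that the ``real part'' of a quaternion entry — the coefficient of $1$ in the basis $\{1,i,j,k\}$ — is transported consistently through the splitting map $f_0$ used for \eqref{reduce-real-trace} on one hand and through the $j$-coordinate decomposition $a = a_1 + ja_2$ feeding the $*$-operation on the other. In the first route this comparison never has to be made, which is why I would present that one.
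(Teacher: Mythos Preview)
Both routes you outline are correct. Your ``alternative'' is precisely the paper's argument: the text preceding the lemma computes $\tr(a^*) = \tr(a_1) + \tr(\ovl{a_1}) = 2\tr(\Real(a))$ and then invokes \eqref{reduce-real-trace}. Your preferred route via Lemma~\ref{axiom-Trd} is a genuinely different and arguably cleaner argument: instead of matching two explicit splitting computations (the one behind \eqref{reduce-real-trace} and the block form of $*$), you characterize $\Trd$ axiomatically and verify that $a \mapsto \tr(a^*)$ satisfies the axioms, using only that $*$ is a ring homomorphism (Proposition~\ref{*-morphism}) and that the ordinary trace on $M_{2n}(F(\sqrt{-1}))$ is symmetric. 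The gain is that you never touch the explicit isomorphism $f_0$ or the bookkeeping you flag at the end; the cost is that you must separately check the target is $F$, which you do via the same block computation the paper uses anyway. Either presentation is fine.
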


We use these observations to reformulate in a uniform way some results from
several authors (see the proof for the references) on unitary similarity of
tuples of matrices. These results are already recalled in
\cite[Theorem~2.2.2]{kt25} for the real and complex cases.

\begin{thm}\label{Specht-property}
  Let $(D,\vt) \in \{(F,\id), (F(\sqrt{-1}),-), ((-1,-1)_F, -)\}$ where $F$ is a
  real closed field, and let $d \in \N$. Then the following are equivalent, for
  any $X, Y  \in M_n(D)^d$:
  \begin{enumerate}
    \item\label{SP3} There is $O \in M_n(D)$ such that $\vt(O)^tO = I_n$ and
      $\vt(O)^tX_iO = Y_i$ for $i=1, \ldots, d$.
    \item\label{SP1} For every word $w$ in $x_1, \ldots, x_d, \vt(x_1)^t, \ldots,
      \vt(x_d)^t$ we have $\Trd(w(X,\vt(X)^t)) = \Trd(w(Y,\vt(Y)^t))$.
    \item\label{SP2} For every word $w$ in $x_1, \ldots, x_d, \vt(x_1)^t,
      \ldots, \vt(x_d)^t$ of length at most $n^2$, we have
      $\Trd(w(X,\vt(X)^t)) = \Trd(w(Y,\vt(Y)^t))$.
  \end{enumerate}
\end{thm}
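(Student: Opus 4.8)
The plan is to reduce all three cases to the classical result over $\R$ and $\C$ via the embeddings provided earlier in the excerpt, and then to extract the bound $n^2$ from a dimension argument on the algebra generated by the matrices. First I would observe that $(\ref{SP3})\Rightarrow(\ref{SP1})$ is immediate: if $Y_i = \vt(O)^t X_i O$ with $\vt(O)^t O = I_n$, then for every word $w$ we have $w(Y,\vt(Y)^t) = \vt(O)^t\, w(X,\vt(X)^t)\, O$ (using that $\vt(\,\cdot\,)^t$ is an anti-automorphism fixing $\vt(O)^t O = I_n$, so conjugation by $O$ commutes with forming such words), and the reduced trace is invariant under this conjugation since $\Trd(\vt(O)^t Z O) = \Trd(O\,\vt(O)^t Z) = \Trd(Z)$ by the trace property of $\Trd$ (Lemma~\ref{axiom-Trd}) together with $O\,\vt(O)^t = I_n$. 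The implication $(\ref{SP1})\Rightarrow(\ref{SP2})$ is trivial. So the real content is $(\ref{SP2})\Rightarrow(\ref{SP3})$, and the bound in $(\ref{SP2})$.

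For the case $(D,\vt) = (F,\id)$ with $F$ real closed: since all the relevant statements (existence of an orthogonal $O$, and equality of traces of words) are first-order over the field $F$, and $F\equiv\R$ as ordered fields (real closed fields with the same theory), by transfer it suffices to prove the equivalence over $\R$, where it is exactly the classical theorem on simultaneous unitary/orthogonal similarity of tuples of real matrices (Specht's theorem and its refinements — I would cite Specht, Pearcy, and for the length bound the work of Pappacena or Freedman–Gupta–Guralnick giving the $n^2$ bound; these are the references that \cite[Theorem~2.2.2]{kt25} collects). For $(D,\vt) = (F(\sqrt{-1}),-)$: if $F(\sqrt{-1})$ is a field then $F$ is real closed non-? — actually $F$ real closed forces $F(\sqrt{-1})$ algebraically closed $\equiv\C$, and again $\Trd = \tr$ (Remark~\ref{equal-field}), so the statement transfers from the complex case (Specht's original setting). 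For $(D,\vt) = ((-1,-1)_F,-)$: here I would use the $*$-operation. By Proposition~\ref{*-morphism}, $X\mapsto X^*$ embeds $(M_n((-1,-1)_F),-^t)$ into $(M_{2n}(F(\sqrt{-1})),-^t)$ as a ring-with-involution morphism, and by Lemma~\ref{Trd-tr}, $\Trd(Z) = \tr(Z^*)$ for all $Z\in M_n((-1,-1)_F)$. Hence condition $(\ref{SP1})$ for $(X,Y)$ over the quaternions translates into: $\tr(w(X^*,(X^*)^{-t}\text{-type entries})) = \tr(\cdots)$ for all words, i.e.\ the complex condition for the tuples $X^* , Y^*$ — because $w(X,\vt(X)^t)^* = w(X^*, (X^*)^{-t})$... more precisely $(\vt(X_i)^t)^* = \overline{X_i^*}^t$ since $*$ is a morphism of rings with involution. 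So $(X,Y)$ satisfy $(\ref{SP1})$ iff $(X^*,Y^*)$ satisfy the complex analogue, which by the $\C$-case gives a unitary $U\in M_{2n}(F(\sqrt{-1}))$ with $\overline U^t X_i^* U = Y_i^*$. The remaining — and I expect main — obstacle is descent: one must show this complex unitary $U$ can be chosen in the image of the $*$-map (equivalently, of the form $V^*$ for a quaternionic $V$ with $\overline V^t V = I_n$), so that conjugating back yields the desired $O$. I would handle this either by the standard averaging/polar-decomposition trick adapted to the $*$-operation, or by noting that the set of such $U$ is a real algebraic (semialgebraic over $F$) variety, nonempty over $\R$-points, hence nonempty over $F$ by transfer, choosing it inside the quaternionic locus.

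For the length bound $n^2$ in $(\ref{SP2})$, the key point is that the $F$-subalgebra (in each case, the $Z(A)$-subalgebra of the split form) generated by $X_1,\dots,X_d,\vt(X_1)^t,\dots,\vt(X_d)^t$ sits inside an algebra of dimension $\le n^2$ over the relevant center ($M_n(F)$, $M_n(F(\sqrt{-1}))$, or $M_{2n}(F(\sqrt{-1}))\cap\text{image}$ which still has $F(\sqrt{-1})$-dimension $\le (2n)^2$, but the quaternionic algebra has $Z$-dimension $n^2$); so every word is an $F$-linear (resp.\ $Z(A)$-linear) combination of words of length $< n^2$, by the usual argument that once the span stabilizes, longer words add nothing. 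Since $\Trd$ is linear, matching traces on all words of length $\le n^2$ forces matching on all words. I would phrase this cleanly once and apply it in all three cases, being slightly careful in the quaternionic case to count dimensions in the split algebra after applying $*$, where the generated algebra has dimension $\le n^2$ over $F(\sqrt{-1})$ as well because the image of $M_n((-1,-1)_F)$ has $F$-dimension $4n^2$, i.e.\ $F(\sqrt{-1})$-dimension... this is the one spot needing care, and I would reconcile it by working with the reduced-trace/center picture rather than the naive split dimension.
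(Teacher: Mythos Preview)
Your overall strategy coincides with the paper's: prove the equivalences over $\R$ in each of the three cases by citing the classical Specht--Wiegmann--Sibirski\u{\i} results, handle the quaternion case by pushing through the $*$-embedding into complex matrices and then descending, and finally transfer to an arbitrary real closed $F$ using that $(\ref{SP2})\Leftrightarrow(\ref{SP3})$ is a first-order statement. For the descent step (your ``main obstacle'') the paper invokes precisely \cite[Theorem~1]{wiegmann62}: if $X^*$ and $Y^*$ are unitarily similar in $M_{2n}(\C)$, then the unitary can be taken of the form $O^*$ for some quaternionic $O$. Your ``averaging/polar-decomposition'' remark gestures at the right mechanism, but you should locate and cite Wiegmann's result rather than leave this as a sketch.

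There is, however, a genuine gap in your handling of the length bound. Your argument runs: the algebra generated by the $X_i,\vt(X_i)^t$ has dimension at most $n^2$ over the centre, so by stabilisation of the word-length filtration every word in the $X_i$'s is a linear combination of words of length $<n^2$; since $\Trd$ is linear, matching traces on short words forces matching on all words. The flaw is that the coefficients in such a linear combination depend on the tuple being evaluated. If $w(X,\vt(X)^t)=\sum_i c_i\,w_i(X,\vt(X)^t)$ with $|w_i|<n^2$, there is no reason the \emph{same} scalars $c_i$ give $w(Y,\vt(Y)^t)=\sum_i c_i\,w_i(Y,\vt(Y)^t)$; the relation is a fact about the particular subalgebra $\langle X_i,\vt(X_i)^t\rangle$, not a formal identity. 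So from $\Trd(w_i(X,\ldots))=\Trd(w_i(Y,\ldots))$ you cannot deduce $\Trd(w(X,\ldots))=\Trd(w(Y,\ldots))$. One can partially repair this using nondegeneracy of the trace form on the $*$-subalgebra (a relation $\sum c_i w_i(X,\ldots)=0$ with $|w_i|\le k$ is witnessed by vanishing of traces of words of length $\le 2k$, hence transfers to $Y$), but that route only yields a bound of order $2n^2$. The sharp bound $n^2$ is a nontrivial theorem of invariant theory due to Procesi and Razmyslov, and this is exactly what the paper does: it cites \cite[Theorem~7.3]{procesi76} and \cite[Razmyslov's Theorem, p.~451]{procesi07} for the real and complex cases rather than attempting an elementary dimension count. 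You should do the same; the difficulty you flag in the quaternion case (``the one spot needing care'') is then absorbed into the reduction to the complex $2n\times 2n$ situation together with Wiegmann's descent.
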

\begin{proof}
  We first assume that $F = \R$. Recall from Remark~\ref{equal-field} that
  $\Trd_{M_n(D)} = \tr$ when $D = \R$ or $\C$.  The equivalence of \eqref{SP3}
  and \eqref{SP1} is \cite[Theorem~4]{wiegmann62} for $D = \C$, and
  \cite[Lemma~2 ]{sibirskii68} for $D = \R$. The degree bounds for $D=\R$ and
  $D=\C$ are from \cite[Theorem~7.3]{procesi76}, \cite[Razmyslov's Theorem,
  p.~451]{procesi07}.

  If $(D, \vt) = ((-1,-1)_\R, -)$, the result is a consequence of 
  \cite[Theorem~4]{wiegmann62} together with \cite[Theorem~1]{wiegmann62}, as
  briefly explained in the final paragraph on the same
  page as \cite[Theorem~4]{wiegmann62}, since $\Trd = 2 (\tr \circ \Real)$ in
  this case. We give some details since not many are given, and in order to
  make it clear that \eqref{SP2} is also covered:

  \eqref{SP3}$\Rightarrow$\eqref{SP1}: From $\bar{O}^tX_iO = Y_i$ we get
  $\bar{O}^t\bar{X_i}^tO = \bar{Y_i}^t$ for every $i$. Therefore we have $\bar{O}^t w(X,
  \bar{X}^t) O = w(\bar{O}^tXO, \bar{O}^t\bar{X}^tO) = w(Y,\bar{Y}^t)$, and the
  result follows since the reduced trace is invariant under $F$-algebra
  isomorphisms.

  \eqref{SP1}$\Rightarrow$\eqref{SP2} is clear.

  \eqref{SP2}$\Rightarrow$\eqref{SP3}: By Lemma~\ref{Trd-tr} we have
  $\tr(w(X,\vt(X)^t)^*) = \tr(w(Y,\vt(Y)^t)^*)$ for every word $w$ of length at
  most $n^2$. Since $*$ is a morphism of algebras with involution (cf.
  Proposition~\ref{*-morphism}) we get $\tr(w(X^*,\ovl{X^*}^t)) = \tr(w(Y^*,
  \ovl{Y^*}^t))$, i.e., $\Trd(w(X^*,\ovl{X^*}^t)) = \Trd(w(Y^*, \ovl{Y^*}^t))$
  since we are taking the trace of complex matrices, cf.
  Remark~\ref{equal-field}. By the complex case, we get
  a unitary matrix $U$ such that $\ovl{U}^t {X_i}^* U = {Y_i}^*$ for every $i =
  1, \ldots, d$. By \cite[Theorem~1]{wiegmann62} there is a unitary matrix $V$
  in $M_{2n}(\C)$ such that $V=O^*$ for some $O \in M_n((-1,-1)_\R)$ and
  $\ovl{V}^t {X_i}^* V = {Y_i}^*$ for every $i = 1, \ldots, d$ (the proof of
  \cite[Theorem~1]{wiegmann62} shows that $V$ only depends on $U$, so is the
  same for every $i$), i.e., $\ovl{O^*}^t {X_i}^* O^*= {Y_i}^*$ for every $i =
  1, \ldots, d$. It is direct to check that $O$ is unitary and, using that $*$
  is an injective morphism of algebras with involution, we obtain first
  $(\ovl{O}^t X_i O)^* = Y_i^*$ and then $\ovl{O}^t X_i O = Y_i$ for every $i =
  1, \ldots, d$.

  For the general case where $F$ is a real closed field, observe that the
  equivalence of \eqref{SP3} and \eqref{SP2} can be expressed as a first-order
  formula in the language of fields, and thus follows from the case $F = \R$.
  And it is clear that \eqref{SP3} $\Rightarrow$ \eqref{SP1} (with the same
  proof as above) and that \eqref{SP1} $\Rightarrow$ \eqref{SP2}.
\end{proof}

\begin{rem}
  We need to use the reduced trace instead of the usual trace for
  matrices with quaternion coefficients, since there are matrices $A,B,U \in
  M_2((-1,-1)_\R)$ such that $\ovl{U}^t U = I_2$, $B = \ovl{U}^t A U$ but
  $\tr(B) \not = \tr(A)$, see \cite[Example 7.2]{zhang97}.
\end{rem}

\section{Model completeness}

\subsection{Matrix bases}\label{matrix-bases}
The matrix algebras $M_n(F)$, $M_n(F(\sqrt{-1}))$ and $M_n((-1,-1)_F)$, where
$F$ is formally real (or even real closed), will play an important role in this
paper.

The essential properties of their canonical bases can be expressed by
quantifier-free formulas. We present these formulas below, as well as two
immediate consequences (Lemmas~\ref{lin-indep} and \ref{centre}). The results in
this section are from \cite[Section~2.1.2]{kt25} for $M_n(F)$ and
$M_n(F(\sqrt{-1}))$:

\begin{enumerate}
  \item Case 1: The algebra $M_n(F)$. Let $\CB := \{E_{r,s}\}_{r,s=1}^n$ be its
    canonical basis (the matrix $E_{r,s}$ is the matrix with zeroes everywhere,
    except for a 1 at coordinates $(r,s)$). We have
    \[M_n(F) \models \delta^{(1)}(E_{r,s})_{r,s \in \{1,\ldots,n\}},\] 
    where
    \begin{align*}
      \delta^{(1)}(X_{r,s})_{r,s \in \{1,\ldots,n\}} := \bigwedge_{r,s,t} X_{r,s} \cdot X_{s,t} =
      X_{r,t} \not = 0 \wedge \bigwedge_{r,s,t,\ell s \not = t} X_{r,s} \cdot
      X_{t,\ell} = 0.
    \end{align*}

  \item Case 2: The algebra $M_n(F(\sqrt{-1}))$. Writing $i := \sqrt{-1}$,
    the set $\CB := \{E_{r,s}, E_{r,s}i\}_{r,s=1}^n$ is a basis of $M_n(F(\sqrt{-1}))$,
    and we have
    \[M_n(F(\sqrt{-1}) \models \delta^{(2)}(E_{r,s}, E_{r,s}i)_{r,s \in \{1,\ldots,n\}},\]
    where
    \begin{align*}
      \delta^{(3)}(X_{r,s}^{(1)}, &X_{r,s}^{(i)})_{r,s \in \{1,\ldots,n\}}
        := \\
        & \bigwedge_{\substack{x,y,z \in \{1,i\} \\ \delta \in \{-1,1\},\
        xy=\delta z}}\ \bigwedge_{r,s,t}
        X_{r,s}^{(x)} \cdot X_{s,t}^{(y)} = \delta X_{r,t}^{(z)} \not = 0 \wedge
        \bigwedge_{r,s,t,\ell, s \not = t} X_{r,s}^{(x)} \cdot X_{t,\ell}^{(y)}
        = 0.
    \end{align*}
  \item Case 3: The algebra $M_n((-1,-1)_F)$. We denote by $\{1,i,j,k\}$ the usual
    basis of $(-1,-1)_F$. The set $\CB := \{E_{r,s}, E_{r,s}i,
    E_{r,s}j, E_{r,s}k\}_{r,s=1}^n$ is a basis of $M_n((-1,-1)_F)$ and we
    have
    \[M_n((-1,-1)_F) \models \delta^{(3)}(E_{r,s}, E_{r,s}i, E_{r,s}j,
    E_{r,s}k)_{r,s \in \{1,\ldots,n\}},\]
    where
    \begin{align*}
      \delta^{(3)}(X_{r,s}^{(1)}, &X_{r,s}^{(i)}, X_{r,s}^{(j)}, X_{r,s}^{(k)})_{r,s \in \{1,\ldots,n\}}
        := \\
        &\bigwedge_{\substack{x,y,z \in \{1,i,j,k\} \\ \delta \in \{-1,1\},\
        xy=\delta z}}\ \bigwedge_{r,s,t}
        X_{r,s}^{(x)} \cdot X_{s,t}^{(y)} = \delta X_{r,t}^{(z)} \not = 0 \wedge
        \bigwedge_{r,s,t,\ell, s \not = t} X_{r,s}^{(x)} \cdot X_{t,\ell}^{(y)}
        = 0.
    \end{align*}
\end{enumerate}

\begin{lemma}\label{lin-indep}
  Let $A$ be an $F$-algebra.
  \begin{enumerate}
    \item \label{lin-indep1} If $\{e_{r,s}\}_{r,s \in \{1,\ldots,n\}}
      \subseteq A$ is such that $A \models \delta^{(1)}(e_{r,s})_{r,s \in
      \{1,\ldots,n\}}$. Then the set $\{e_{r,s}\}_{r,s \in \{1,\ldots,n\}}$ is linearly
      independent over $F$.
    \item Assume that $F(\sqrt{-1})$ is a field (cf.
      Remark~\ref{F-div-alg}) and
      that we have elements $e_{r,s}^{(1)},
      e_{r,s}^{(i)} \in A$ (for ${r,s \in \{1,\ldots,n\}}$) such that $A
      \models \delta^{(2)}(e_{r,s}^{(1)}, e_{r,s}^{(i)})_{r,s \in
      \{1,\ldots,n\}}$. Then the set $\{e_{r,s}^{(1)}, e_{r,s}^{(i)}\}_{r,s \in
      \{1,\ldots,n\}}$ is linearly independent over $F$.
    \item Assume that $(-1,-1)_F$ is a division algebra (cf.
      Remark~\ref{F-div-alg})
      and that we have
      elements $e_{r,s}^{(1)}$,
      $e_{r,s}^{(i)}$, $e_{r,s}^{(j)}$, $e_{r,s}^{(k)} \in A$ (for ${r,s \in \{1,\ldots,n\}}
      $) such that\\
      $A \models \delta^{(3)}(e_{r,s}^{(1)},
      e_{r,s}^{(i)}, e_{r,s}^{(j)}, e_{r,s}^{(k)})_{r,s \in \{1,\ldots,n\}}$.
      Then the set $\{e_{r,s}^{(1)}, e_{r,s}^{(i)}, e_{r,s}^{(j)},
      e_{r,s}^{(k)}\}_{r,s \in \{1,\ldots,n\}}$ is linearly independent over
      $F$.
  \end{enumerate}
\end{lemma}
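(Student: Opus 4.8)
The plan is to adapt the classical proof that matrix units are $F$-linearly independent, using the relations encoded in $\delta^{(1)}$, $\delta^{(2)}$, $\delta^{(3)}$ to ``cut down'' an arbitrary linear dependence to a single matrix position. For the first assertion, suppose $\sum_{r,s}\lambda_{r,s}\,e_{r,s}=0$ with all $\lambda_{r,s}\in F$, and fix $p,q$. From $\delta^{(1)}$ one reads off $e_{p,p}\,e_{r,s}=0$ for $r\ne p$, $e_{p,p}\,e_{p,s}=e_{p,s}$, $e_{p,s}\,e_{q,q}=0$ for $s\ne q$, and $e_{p,q}\,e_{q,q}=e_{p,q}\ne 0$. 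Multiplying the dependence on the left by $e_{p,p}$ and on the right by $e_{q,q}$ annihilates every term but one, leaving $\lambda_{p,q}\,e_{p,q}=0$; since $e_{p,q}\ne 0$ and $\lambda_{p,q}$ lies in the field $F$, this forces $\lambda_{p,q}=0$. As $p,q$ are arbitrary, the family is independent.

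For the second and third assertions I would run the same sandwiching. Starting from $\sum_{r,s}\bigl(\lambda_{r,s}e_{r,s}^{(1)}+\mu_{r,s}e_{r,s}^{(i)}\bigr)=0$ (resp.\ the analogous four-term identity with coefficients for $e^{(1)},e^{(i)},e^{(j)},e^{(k)}$), left multiplication by $e_{p,p}^{(1)}$ and right multiplication by $e_{q,q}^{(1)}$, using the instances of $\delta^{(2)}$ (resp.\ $\delta^{(3)}$) coming from $1\cdot x=x$ and $x\cdot 1=x$, collapses the sum to $\lambda_{p,q}e_{p,q}^{(1)}+\mu_{p,q}e_{p,q}^{(i)}=0$ (resp.\ $\sum_{x}c_{p,q}^{(x)}e_{p,q}^{(x)}=0$). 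The crucial observation is then that restricting the relations in $\delta^{(2)}$ (resp.\ $\delta^{(3)}$) to $r=s=t=p$ yields exactly the multiplication table of $F(\sqrt{-1})$ (resp.\ $(-1,-1)_F$), so the $F$-linear map
\[\varphi\colon a+bi\longmapsto a\,e_{p,p}^{(1)}+b\,e_{p,p}^{(i)}\qquad\bigl(\text{resp. }a+bi+cj+dk\longmapsto a\,e_{p,p}^{(1)}+b\,e_{p,p}^{(i)}+c\,e_{p,p}^{(j)}+d\,e_{p,p}^{(k)}\bigr)\]
is multiplicative and sends $1$ to $e_{p,p}^{(1)}\ne 0$. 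Since $e_{p,p}^{(x)}e_{p,q}^{(1)}=e_{p,q}^{(x)}$, the leftover identity reads precisely $\varphi(u)\,e_{p,q}^{(1)}=0$, where $u$ is the element $\lambda_{p,q}+\mu_{p,q}i$ (resp.\ $\lambda_{p,q}+\mu_{p,q}i+\nu_{p,q}j+\rho_{p,q}k$).

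To conclude, suppose $u\ne 0$. The hypothesis of the relevant part is exactly that $F(\sqrt{-1})$ is a field (resp.\ $(-1,-1)_F$ is a division algebra), so $u$ is invertible, and left multiplying $\varphi(u)\,e_{p,q}^{(1)}=0$ by $\varphi(u^{-1})$ gives $0=\varphi(u^{-1})\varphi(u)\,e_{p,q}^{(1)}=\varphi(1)\,e_{p,q}^{(1)}=e_{p,p}^{(1)}e_{p,q}^{(1)}=e_{p,q}^{(1)}$, contradicting the ``$\ne 0$'' conjunct of $\delta^{(2)}$ (resp.\ $\delta^{(3)}$). Hence $u=0$, i.e.\ all coefficients at position $(p,q)$ vanish, and independence follows as before. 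Alternatively, without invoking invertibility directly, multiplying by $\varphi(\bar u)$ yields $N(u)\,e_{p,q}^{(1)}=0$ with $N$ the norm form $\qf{1,1}$ (resp.\ $\qf{1,1,1,1}$); since $F$ is a field and $e_{p,q}^{(1)}\ne 0$ this forces $N(u)=0$, and anisotropy of $N$, i.e.\ Remark~\ref{F-div-alg}, gives $u=0$.

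The only genuinely non-routine point is the separation of the coefficients of $e^{(1)},e^{(i)},\dots$ in the second-kind and quaternionic cases; everything else is bookkeeping with finitely many explicit matrix-unit relations. The way this is handled above is to observe that the elements $e_{p,p}^{(x)}$ span a copy of the relevant division algebra inside the corner $e_{p,p}^{(1)}\,A\,e_{p,p}^{(1)}$, which is exactly where the division-ring hypotheses (equivalently, the anisotropy of $\qf{1,1}$ or $\qf{1,1,1,1}$) enter; for the case $M_n(F)$ no such hypothesis is needed.
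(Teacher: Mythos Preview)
Your proof is correct and follows essentially the same route as the paper. The paper proves part~(3) only, reduces to a single position by right-multiplying by $e_{s_0,t}^{(1)}$ and left-multiplying by $e_{t,r_0}^{(x)}$ for each $x\in\{1,i,j,k\}$ to obtain four equations in $e_{t,t}^{(1)},\dots,e_{t,t}^{(k)}$, and then takes the explicit linear combination $u(E1)-v(E2)-w(E3)-z(E4)$ to force $(u^2+v^2+w^2+z^2)e_{t,t}^{(1)}=0$; this is precisely your ``multiply by $\varphi(\bar u)$'' step written out coordinatewise. Your packaging via the corner homomorphism $\varphi$ is a bit cleaner and makes the role of the division-algebra hypothesis more transparent, but the content is the same.
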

\begin{proof}
  We prove the third statement, since it is the most involved.
  Assume that, for some $u_{r,s}, v_{r,s}, w_{r,s}, z_{r,s} \in F$, we have
  \[\sum_{r,s} u_{r,s}e_{r,s}^{(1)} + v_{r,s}e_{r,s}^{(i)} +
  w_{r,s}e_{r,s}^{(j)} + z_{r,s}e_{r,s}^{(k)}= 0.\]
  Let $s_0, t \in \{1, \ldots, n\}$. Multiplying on the right by $e_{s_0,t}^{(1)}$
  we obtain
  \[\sum_{r} u_{r,s_0}e_{r,t}^{(1)} + v_{r,s_0}e_{r,t}^{(i)} +
  w_{r,s_0}e_{r,t}^{(j)} + z_{r,s_0}e_{r,t}^{(k)} = 0.\]
  Multiplying this line on the left by $e_{t,r_0}^{(1)}$,
  $e_{t,r_0}^{(i)}$, $e_{t,r_0}^{(j)}$, or $e_{t,r_0}^{(k)}$, we obtain the
  following four equations
  \begin{align}
    \label{E1} u_{r_0,s_0}e_{t,t}^{(1)} + v_{r_0,s_0}e_{t,t}^{(i)} +
    w_{r_0,s_0}e_{t,t}^{(j)} + z_{r_0,s_0}e_{t,t}^{(k)} = 0 \\
    \label{E2} u_{r_0,s_0}e_{t,t}^{(i)} - v_{r_0,s_0}e_{t,t}^{(1)} +
    w_{r_0,s_0}e_{t,t}^{(k)} - z_{r_0,s_0}e_{t,t}^{(j)} = 0 \\
    \label{E3} u_{r_0,s_0}e_{t,t}^{(j)} - v_{r_0,s_0}e_{t,t}^{(k)} -
    w_{r_0,s_0}e_{t,t}^{(1)} + z_{r_0,s_0}e_{t,t}^{(i)} = 0 \\
    \label{E4} u_{r_0,s_0}e_{t,t}^{(k)} + v_{r_0,s_0}e_{t,t}^{(j)} -
    w_{r_0,s_0}e_{t,t}^{(i)} - z_{r_0,s_0}e_{t,t}^{(1)} = 0.
  \end{align}
  Computing $u_{r_0,s_0} \eqref{E1} - v_{r_0,s_0} \eqref{E2} - w_{r_0,s_0}
  \eqref{E3} - z_{r_0,s_0} \eqref{E4}$, we obtain
  \[u_{r_0,s_0}^2 + v_{r_0,s_0}^2 + w_{r_0,s_0}^2 + w_{r_0,s_0}^2 = 0,\]
  and the result follows by hypothesis on $F$. 
\end{proof}

\begin{lemma}\label{centre}
  Let $D \in \{F, F(\sqrt{-1}), (-1,-1)_F\}$ with $F$ a field such that $D$ is a
  division algebra (see Remark~\ref{F-div-alg}), and let $A = M_n(D)$. Let $B$
  be an $L$-algebra for some field $L$, such that $\dim_F A = \dim_L B$ and $A$
  is a subring of $B$.  Then $Z(A) = Z(B) \cap A$.
\end{lemma}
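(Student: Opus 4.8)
The plan is to prove the two inclusions $Z(B) \cap A \subseteq Z(A)$ and $Z(A) \subseteq Z(B) \cap A$ separately. The first is immediate: if $x \in A$ commutes with every element of $B$, then in particular it commutes with every element of $A$, so $x \in Z(A)$; thus $Z(B) \cap A \subseteq Z(A)$. The content is the reverse inclusion, namely that a central element of $A$ remains central in the (a priori much larger) ring $B$.

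For the reverse inclusion, I would exploit that $A = M_n(D)$ carries the canonical basis $\mathcal{B}$ described in Section~\ref{matrix-bases}, which satisfies the quantifier-free formula $\delta^{(1)}$, $\delta^{(2)}$, or $\delta^{(3)}$ according to whether $D$ is $F$, $F(\sqrt{-1})$, or $(-1,-1)_F$. Since this basis sits inside $A \subseteq B$, and since $\dim_L B = \dim_F A$ equals the cardinality of $\mathcal{B}$, Lemma~\ref{lin-indep} shows that $\mathcal{B}$ is linearly independent over $L$ in $B$, hence is in fact an $L$-basis of $B$. The point is now that the multiplication rules encoded in the $\delta$-formulas force $B$ to behave, as an $L$-algebra on this basis, exactly like a matrix algebra: writing the $e_{r,s}$ (the ``$(1)$''-part of the basis) one gets $e_{r,s}e_{s,t} = e_{r,t}$, $e_{r,s}e_{t,\ell} = 0$ for $s \neq t$, and in particular the $e_{r,r}$ are orthogonal idempotents summing to $1$. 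Any $b \in B$ can then be written $b = \sum_{r,s} e_{r,r}\, b\, e_{s,s}$, and $e_{r,r} B e_{s,s}$ is spanned over $L$ by the basis elements ``supported'' at position $(r,s)$, i.e.\ by $e_{r,s}$ in Case~1, by $e_{r,s}, e_{r,s}i$ in Case~2, and by $e_{r,s}, e_{r,s}i, e_{r,s}j, e_{r,s}k$ in Case~3. Hence $e_{r,r}Be_{s,s} = e_{r,1}\,(e_{1,1}Be_{1,1})\,e_{1,s}$ and $B \cong M_n(D')$ where $D' := e_{1,1}Be_{1,1}$ is an $L$-algebra containing $e_{1,1}Ae_{1,1} \cong D$ with $\dim_L D' = \dim_F D$.

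Now take $x \in Z(A)$. Writing $x = \sum_{r,s} x_{r,s}$ with $x_{r,s} \in e_{r,r}Ae_{s,s}$, centrality of $x$ in $A$ (in particular commuting with all the $e_{r,s}$ of the $(1)$-basis, which lie in $A$) forces, by the usual matrix-unit computation, that $x_{r,s} = 0$ for $r \neq s$ and that the ``diagonal blocks'' $e_{r,1} x_{1,1} e_{1,r}$ all agree with $x_{r,r}$; moreover commuting with $D \cong e_{1,1}Ae_{1,1}$ forces $x_{1,1} \in Z(e_{1,1}Ae_{1,1}) = Z(D)$. But then exactly the same computation, now carried out in $B$ with the same basis elements $e_{r,s}$ and the same $D \subseteq D'$, shows that such an element $x$ commutes with all matrix units and with all of $D' = e_{1,1}Be_{1,1}$: indeed $Z(D) \subseteq Z(D')$ since $D' $ is a division algebra (being a finite-dimensional algebra which, as $e_{1,1}Be_{1,1}$ with $B \cong M_n(D')$, is forced to be one whenever $D$ is — here one uses that $\dim_L D' = \dim_F D \le 4$ and that $D' \supseteq D$, so $D'$ is commutative iff $D$ is, and is a quaternion algebra over $L$ iff $D$ is over $F$, the anisotropy conditions of Remark~\ref{F-div-alg} transferring because $D \subseteq D'$). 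Hence $x \in Z(B)$, which gives $Z(A) \subseteq Z(B) \cap A$ and completes the proof.

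The main obstacle I anticipate is the identification $D' = e_{1,1}Be_{1,1}$ as a division algebra of the right shape and, with it, the inclusion $Z(D) \subseteq Z(D')$: everything else is the standard ``matrix units $\Rightarrow$ $B = M_n(D')$'' bookkeeping, transported verbatim from $A$ to $B$ because the relevant basis elements all live in $A$ and satisfy the same quantifier-free identities there and in $B$. In the three concrete cases $D' $ has $L$-dimension $1$, $2$, or $4$ and contains an isomorphic copy of $D$ spanning it after base change; a short case analysis using Remark~\ref{F-div-alg} (the relevant sums of squares being nonzero is inherited from $A \subseteq B$) pins down $D'$ and yields $Z(D) = Z(D')$ in each case, which is all that is needed.
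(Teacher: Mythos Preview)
Your argument contains a gap, and the fix for it reveals that most of your machinery is unnecessary.

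The gap is the claim that $D' := e_{1,1} B e_{1,1}$ is a division algebra. The lemma imposes no hypothesis on $L$ beyond $\dim_L B = \dim_F A$, so the anisotropy conditions of Remark~\ref{F-div-alg} need not hold over $L$, and the inclusion $D \subseteq D'$ does nothing to force them. Concretely: take $F = \R$, $D = (-1,-1)_\R = \H$, $A = M_n(\H)$, $L = \C$, and $B = A \otimes_\R \C$; then $D' \cong \H \otimes_\R \C \cong M_2(\C)$, which is not a division algebra. Your conclusion $Z(D) \subseteq Z(D')$ does survive in this example (since $\R \subseteq \C$), but your stated justification for it is invalid.

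The correct justification is the observation you keep circling but never isolate: the $L$-basis $e_{1,1}^{(1)}, e_{1,1}^{(i)}, \ldots$ of $D'$ lies inside $e_{1,1} A e_{1,1} \cong D$, so any element central in $D$ commutes with this basis and hence, by $L$-linearity, with all of $D'$. But this is precisely the paper's argument, carried out in the corner $e_{1,1} B e_{1,1}$ instead of in $B$ itself --- and once you see it, the entire $B \cong M_n(D')$ detour is superfluous. The paper's proof is one line: by Lemma~\ref{lin-indep} and the dimension hypothesis, the canonical basis $\mathcal{B} \subseteq A$ is an $L$-basis of $B$, so for $x \in A$
\[
  x \in Z(A) \ \Longleftrightarrow\ x b = b x \text{ for all } b \in \mathcal{B} \ \Longleftrightarrow\ x \in Z(B),
\]
the middle condition being literally the same whether read in $A$ or in $B$. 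No structure theory of $B$, no identification of $D'$, and no division-algebra claim are needed.
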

\begin{proof}
  We prove the case $D=F$, the other two are similar.
  With notation as at the start of this section, we have $A \models
  \delta^{(1)}(E_{r,s})_{r,s \in \{1, \ldots, n\}}$ and therefore $B \models
  \delta^{(1)}(E_{r,s})$, since
  $\delta^{(1)}$ is quantifier-free. By Lemma~\ref{lin-indep}(\ref{lin-indep1}) and since
  $\dim_L B = \dim_F A$, $\{E_{r,s}\}_{r,s \in \{1,\ldots, n\}}$ is a basis of $B$
  over $L$. In particular, for $x \in A$ we have 
  \begin{align*}
    x \in Z(A) &\Leftrightarrow \forall r,s\ xE_{r,s} = E_{r,s}x \text{ in } A
    \\
    &\Leftrightarrow \forall r,s\ xE_{r,s} = E_{r,s}x \text{ in } B \\
    &\Leftrightarrow x \in Z(B). \qedhere
  \end{align*}
\end{proof}

\subsection{Model completeness}
The objective of this section is Proposition~\ref{mod-comp}, which states that
the theories $\CSA_m$ and $\CSAI_m$ are model-complete if we ask that the
centre, respectively the base field, is real closed.

\begin{lemma}\label{inclusion}
  Let $D \in \{F, F(\sqrt{-1}), (-1,-1)_F\}$ where $F$ is a formally real field.
  Let $A$ be an $F$-algebra and $f : A \rightarrow M_n(D)$ be an isomorphism of
  $F$-algebras. Let $B$ be an $L$-algebra for some formally real field $L$, such
  that $F \subseteq L$, $A$ is a subring of $B$, and $\dim_F A = \dim_L B$.
  Then:
  \begin{enumerate}
    \item There is an isomorphism of $L$-algebras $g$ such that the following
      diagram is commutative:
      \[\xymatrix{
         A \ar[r]^-f \ar[d]_{\subseteq} & M_n(D) \ar[d]^{\subseteq} \\
         B \ar[r]_-g & M_n(E)
       }\]
      where 
      $E := \begin{cases} L & \text{if } D=F \\
        L(\sqrt{-1}) & \text{if } D = F(\sqrt{-1}) \\
        (-1,-1)_F & \text{if } D = (-1,-1)_F
      \end{cases}$ is a division algebra over $L$, and the inclusion on the
      right is the canonical one induced by $F \subseteq L$.
    \item\label{inclusion3} If $F \prec L$ as fields, then the inclusion of $A$ in $B$ is
      elementary in the language $L_R \cup \{\lTrd\}$ (where $\lTrd$ is
      interpreted by the reduced trace in $A$ and $B$).
  \end{enumerate}
\end{lemma}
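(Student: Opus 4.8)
The plan is to handle the two parts in order, using the quantifier-free axiomatization of matrix bases from Section~\ref{matrix-bases} as the main tool.

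\emph{Part (1).} First I would fix the canonical basis $\CB$ of $M_n(D)$ and pull it back along $f$ to get elements in $A$ satisfying the appropriate formula $\delta^{(1)}$, $\delta^{(2)}$, or $\delta^{(3)}$ (depending on $D$); since these formulas are quantifier-free and $A \subseteq B$, the pulled-back elements still satisfy the same formula in $B$. By Lemma~\ref{lin-indep} they are linearly independent over $L$, and since $\dim_L B = \dim_F A$ equals the number of basis elements, they form an $L$-basis of $B$. Now I define $g : B \to M_n(E)$ by sending each of these basis elements of $B$ to the corresponding canonical matrix (i.e.\ $E_{r,s}$, $E_{r,s}i$, etc.), extended $L$-linearly. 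The relations encoded in $\delta^{(\cdot)}$ say precisely that multiplication of these basis elements matches multiplication of the canonical matrices, so $g$ is a ring homomorphism; it is bijective by the dimension count, hence an isomorphism of $L$-algebras. Commutativity of the square holds on the basis of $A$ (where $g$ agrees with the canonical inclusion composed with $f$) and hence everywhere by $F$-linearity. I should also note $E$ is a division algebra over $L$: for $D = F$ this is trivial, and for the other two cases $L$ is formally real, so $\qf{1,1}$ (resp.\ $\qf{1,1,1,1}$) is anisotropic over $L$, whence Remark~\ref{F-div-alg} applies; strictly, for $(-1,-1)_F$ one needs $\qf{1,1,1,1}$ anisotropic over $L$, which again follows from $L$ formally real.

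\emph{Part (2).} Assuming $F \prec L$, I want the inclusion $A \subseteq B$ to be elementary in $L_R \cup \{\lTrd\}$. Via the commutative diagram from Part~(1), it suffices to show $M_n(D) \prec M_n(E)$ as $L_R \cup \{\lTrd\}$-structures (transporting along the isomorphisms $f$ and $g$, and using that the reduced trace is preserved by $F$-algebra isomorphisms, Lemma~\ref{trace-morphism}, together with the description of $\Trd$ via the given splitting isomorphisms). The key point is that everything in $M_n(E)$ is coordinatized over $E$, and $E$ is interpretable (uniformly, with the same interpreting formulas) inside $M_n(D)$ and $M_n(E)$ from the fixed basis: a tuple of $\dim_F D$ scalars from the "field part" corresponds to an element of $D$ (resp.\ $E$), and matrix entries are recovered as coefficients in the canonical basis. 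Moreover, the reduced trace is, by the explicit formulas in the excerpt, a fixed $F$-linear ($L$-linear) combination of these scalar coordinates — for $D = F$ or $F(\sqrt{-1})$ it is the sum of diagonal entries (suitably interpreted), and for $(-1,-1)_F$ it is $2\sum_r u_{r,1}$ by \eqref{reduce-real-trace}. So any $L_R \cup \{\lTrd\}$-formula about $M_n(E)$ translates, via this coordinatization, into a formula about the base field $E$, i.e.\ about $L$; the analogous formula for $M_n(D)$ translates into the same formula about $F$. Since $F \prec L$, these agree, giving $M_n(D) \prec M_n(E)$.

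\emph{Main obstacle.} The delicate part is making the "coordinatization" argument precise enough to conclude \emph{elementarity} rather than just preservation of quantifier-free formulas: one must check that quantifiers over $M_n(E)$ can be replaced by (bounded tuples of) quantifiers over the scalar field, uniformly and in a way that does not depend on $L$ versus $F$. Concretely, I would argue that for each formula $\varphi(\bar x)$ in $L_R \cup \{\lTrd\}$ there is a formula $\varphi^\sharp(\bar y)$ in the language of fields such that, for any formally real $L$ and the associated $M_n(E)$, $M_n(E) \models \varphi(\bar a)$ iff $L \models \varphi^\sharp(\text{coords of }\bar a)$ — this is a routine but slightly tedious induction on formulas, where the atomic case uses the $\delta^{(\cdot)}$ relations and the explicit $\Trd$ formula, the Boolean cases are immediate, and the quantifier case uses that the universe of $M_n(E)$ is in definable bijection with a power of $L$. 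Once $\varphi^\sharp$ is in hand, elementarity of $M_n(D) \subseteq M_n(E)$ follows immediately from $F \prec L$, and then Part~(1)'s diagram transfers this to $A \subseteq B$.
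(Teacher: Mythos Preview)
Your proposal is correct and follows essentially the same approach as the paper. For Part~(1) your argument is identical to the paper's: pull back the canonical basis via $f$, use that $\delta^{(i)}$ is quantifier-free to transfer it to $B$, apply Lemma~\ref{lin-indep} and the dimension hypothesis to get an $L$-basis, and define $g$ on this basis. For Part~(2) the paper compresses the whole argument into the single observation that $M_n(D)$ and $M_n(E)$ are interpretable in the same way in $F$ and $L$, whence $F \prec L$ gives $M_n(D) \prec M_n(E)$; your inductive translation $\varphi \mapsto \varphi^\sharp$ is exactly what this interpretability statement unpacks to, so you are spelling out what the paper leaves implicit rather than taking a different route.
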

\begin{proof}
  \begin{enumerate}
    \item Let $E$ be defined as in the statement, and let $i=1$ if $D=F$, $i=2$
      if $D = F(\sqrt{-1})$, and $i=3$ if $D = (-1,-1)_F$. Since $L$ is formally real,
      $E$ is a division algebra. We consider the basis $\CB$ of $M_n(D)$
      introduced in cases 1, 2, and 3 at the start of Section~\ref{matrix-bases}. Then $A \models
      \delta^{(i)}(f^{-1}(\CB))$, and thus $B \models
      \delta^{(i)}(f^{-1}(\CB))$ since $\delta^{(i)}$ is quantifier-free. By Lemma~\ref{lin-indep},
      $f^{-1}(\CB)$ is linearly independent in $B$ over $L$ and is thus a basis
      of $B$ over $L$ (since $\dim_F A = \dim_L B$).

      The structure constants of $M_n(D)$ for the basis $\CB$ over $F$, $M_n(E)$
      for the basis $\CB$ over $L$, $A$
      for the basis $f^{-1}(\CB)$ over $F$, and $B$ for the basis $f^{-1}(\CB)$
      over $L$ are all specified by the
      formula $\delta^{(i)}$, so are all the same. Since $B$ and $M_n(E)$ are $L$-algebras,
      it follows that the map $g: B \rightarrow M_n(E)$, $f^{-1}(X) \mapsto X$
      for every $X \in \CB$, is an isomorphism and makes the diagram of the
      statement commutative.
    \item Since the $L_R$-structures $M_n(D)$ and $M_n(E)$ are interpretable
      in the same way in $F$ and in $L$, we have $M_n(D) \prec M_n(E)$. The
      result follows because of the commutativity of the diagram. \qedhere
  \end{enumerate}
\end{proof}

We need a version of Lemma~\ref{inclusion} for algebras with involution when the
base field is real closed. It requires first a preliminary lemma.

\begin{lemma}\label{kind-rcf}
  Let $(A,\s)$ be a central simple algebra with involution over $F$ real closed,
  and let $n \in \N$ and $(D,\vt) \in \{(F, \id), (F(\sqrt{-1}), -),
  ((-1,-1),-)\}$ be such that $A \cong M_n(D)$.  Then:
  \begin{enumerate}
    \item\label{kind-rcf1} $\s$ is of the first kind if and only if $A \cong M_n(F)$ or $A \cong
      M_n((-1,-1)_F)$;
    \item\label{kind-rcf3} $\s$ is of the second kind if and only if $A \cong M_n(F(\sqrt{-1}))$.
  \end{enumerate}
  In particular $\s$ is of the same kind as $\vt$ and $\vt^t$.
\end{lemma}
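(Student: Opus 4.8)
The plan is to recall the dimension formulas for the three candidate algebras and to distinguish them by the kind and type of the involution, which is a purely dimension-theoretic invariant over a real closed field. First I would note that over a real closed field $F$ the three algebras $M_n(F)$, $M_n(F(\sqrt{-1}))$, $(-1,-1)_F$ are pairwise non-isomorphic as $F$-algebras except for the trivial coincidences at small degree, and more importantly their centres differ: $Z(M_n(F)) = F$ and $Z(M_n((-1,-1)_F)) = F$, while $Z(M_n(F(\sqrt{-1}))) = F(\sqrt{-1})$, which has degree $2$ over $F$. So the classification of $\s$ as first or second kind is governed by $[Z(A):F]$, and by the definition of a central simple algebra with involution recalled in Section~\ref{awi} we have $F = Z(A) \cap \Sym(A,\s)$ with $[Z(A):F] \le 2$; hence $\s$ is of the second kind precisely when $Z(A) \ne F$, i.e. precisely when $Z(A) = F(\sqrt{-1})$, i.e. precisely when $A \cong M_n(F(\sqrt{-1}))$. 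This yields statement~\eqref{kind-rcf3}, and statement~\eqref{kind-rcf1} is then just the complementary case, since these are the only two possibilities for the kind of $\s$ and the only three possibilities for $A$ up to isomorphism.

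For the final sentence, I would argue that $\vt$ and $\vt^t$ are themselves $F$-linear involutions on $M_n(D)$ of the respective kinds specified in the table in Section~\ref{awi}: $\id^t$ is orthogonal (first kind), $-^t$ on $(-1,-1)_F$ is symplectic (first kind), and $-^t$ on $F(\sqrt{-1})$ is unitary (second kind). Now given the abstract $(A,\s)$ with $A \cong M_n(D)$, transport $\vt^t$ across this isomorphism to get an $F$-linear involution $\tau$ on $A$. Since $\tau$ corresponds under the $F$-algebra isomorphism to $\vt^t$ on $M_n(D)$, and the isomorphism fixes $F$ pointwise, $\tau$ is of the same kind as $\vt^t$; in particular $F = Z(A) \cap \Sym(A,\tau)$, so $(A,\tau)$ is again a central simple algebra with involution over $F$. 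By Proposition~\ref{SK-invo} (Skolem--Noether), if $\s$ and $\tau$ were of the same kind there would be nothing to prove, but the point is precisely that the kind of any $F$-linear involution on $A$ is determined by $A$ alone over a real closed field: by the first two parts, $\s$ is of the second kind iff $A \cong M_n(F(\sqrt{-1}))$ iff $\tau$ is of the second kind. Hence $\s$, $\tau$, and therefore $\vt^t$ and $\vt$ are all of the same kind.

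I do not expect a serious obstacle here; the content is essentially bookkeeping with the definitions. The one point that deserves a line of care is the claim that the kind of an $F$-linear involution on $A$ depends only on $A$ (over a real closed base field), which is where the hypothesis that $F$ is real closed genuinely enters: it forces $D$ into the short list $\{F, F(\sqrt{-1}), (-1,-1)_F\}$, and among these only $M_n(F(\sqrt{-1}))$ has a non-trivial centre over $F$. Equivalently one can phrase this via the dimension count: an involution on $A$ is of the second kind iff $\dim_F \Sym(A,\s) = \tfrac{1}{2}\dim_F A$, and this number is an invariant of $A$ as an $F$-algebra. Either phrasing closes the argument; I would probably use the centre-based one since it is the most direct and reuses only the structure theory already recalled at the start of Section~\ref{awi}.
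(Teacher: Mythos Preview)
Your proof is correct and follows the same idea as the paper's: both arguments reduce the question to computing $Z(A)$ for each of the three possible $D$ and observing that the kind of $\s$ is determined by whether $Z(A)=F$ or $[Z(A):F]=2$. Your write-up is more discursive than necessary (the Skolem--Noether detour and the dimension-count aside are not needed once you have the centre argument), but the core reasoning matches the paper's.
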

\begin{proof}
  \begin{enumerate}
    \item Assume that $\s$ is of the first kind, so that $Z(A) = F$ and thus
      $Z(A)$ is ordered. Therefore $Z(A) \not \cong F(\sqrt{-1})$, and $A \not
      \cong M_n(F(\sqrt{-1}))$.

      Assume that $A \cong M_n(F)$ or $A \cong M_n((-1,-1)_F)$. Then $Z(A) = F$.
      If $\s$ is of the second kind, then $F$ has a subfield of index 2, which
      is impossible since $F$ is real closed.
    \item It is a reformulation of \eqref{kind-rcf1}. \qedhere
  \end{enumerate}
\end{proof}

\begin{lemma}\label{inclusion-invo}
  Let $(D, \vt) \in \{(F, \id), (F(\sqrt{-1}), -), ((-1,-1)_F, -)\}$ with $F$
  real closed. Let $(A,\s)$ be an $F$-algebra with involution and $f : A
  \rightarrow M_n(D)$ be an isomorphism of $F$-algebras. Let $(B, \tau)$ be an
  $L$-algebra with involution for some real closed field $L$, such that
  $(A,\s)$ is an $L_\CSAI$-substructure of $(B,\tau)$ (i.e., $F \subseteq L$, and
  $(A, \s) \subseteq (B, \tau)$), and $\dim_F A = \dim_L B$. Then:
  \begin{enumerate}
    \item\label{inclusion-invo1} The involutions $\s$ and $\tau$ are of the same kind.
    \item\label{inclusion-invo2} There are an isomorphism of $L$-algebras $g$ and $a \in M_n(D)^\x$
      such that, if
      \[(E,\vt') := \begin{cases} (L, \id) & \text{if } (D, \vt) = (F, \id) \\
        (L(\sqrt{-1}), -) & \text{if } (D, \vt) = (F(\sqrt{-1}),-) \\
        ((-1,-1)_F, -) & \text{if } (D, \vt) = ((-1,-1)_F, -)
      \end{cases}\]
      then the following diagram is commutative
      \[\xymatrix{
        (A,\s) \ar[r]^-f \ar[d]_{\subseteq} & (M_n(D), \Int(a) \circ \vt^t) \ar[d]^{\subseteq} \\
        (B, \tau) \ar[r]_-g & (M_n(E), \Int(a) \circ (\vt')^t) 
      }\]
      where the inclusion on the
      right is the canonical one induced by $F \subseteq L$, and all maps
      respect the reduced trace $\Trd$.
    \item\label{inclusion-invo3} The inclusion of $(A, \s)$ in $(B,\tau)$ is
      elementary in the language $L_\CSAI \cup \{\lTrd\}$ (where $\lTrd$ is
      interpreted in each structure by the reduced trace).
  \end{enumerate}
\end{lemma}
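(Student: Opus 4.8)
The plan is to transport everything to the split matrix algebras via Lemma~\ref{inclusion}, insert the Skolem--Noether theorem to deal with the involutions, and finish with model-completeness of the theory of real closed fields. So first, for \eqref{inclusion-invo1} and the underlying-algebra part of \eqref{inclusion-invo2}, I forget the involutions: $f : A \to M_n(D)$ together with the inclusion $A \subseteq B$ satisfies the hypotheses of Lemma~\ref{inclusion} (both $F$ and $L$ are real closed, hence formally real), so I obtain an isomorphism of $L$-algebras $g : B \to M_n(E)$ restricting to $f$ on $A$, with $M_n(D) \subseteq M_n(E)$ the canonical inclusion. In particular $A \cong M_n(D)$ and $B \cong M_n(E)$, so Lemma~\ref{kind-rcf} applies to both $(A,\s)$ and $(B,\tau)$: $\s$ is of the first kind iff $D \in \{F, (-1,-1)_F\}$, and $\tau$ is of the first kind iff $E \in \{L, (-1,-1)_L\}$; since these single out the same case, $\s$ and $\tau$ are of the same kind, which is also the kind of $\vt^t$ and of $(\vt')^t$.

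Next I handle the involutions and the reduced trace. The map $\s_f := f \circ \s \circ f^{-1}$ is an $F$-linear involution on $M_n(D)$ of the same kind as $\vt^t$, so Proposition~\ref{SK-invo} yields $a \in M_n(D)^\x$ with $\s_f = \Int(a) \circ \vt^t$; thus $f$ is an isomorphism of algebras with involution $(A,\s) \to (M_n(D), \Int(a)\circ \vt^t)$. I claim the \emph{same} $a$ turns $g$ into an isomorphism $(B,\tau) \to (M_n(E), \Int(a)\circ(\vt')^t)$. Indeed, both $g \circ \tau \circ g^{-1}$ and $\Int(a)\circ(\vt')^t$ are $L$-linear self-maps of $M_n(E)$, and they agree on $M_n(D) = f(A)$: for $x \in A$, using $g\uhr A = f$, $\tau\uhr A = \s$ and $(\vt')^t \uhr M_n(D) = \vt^t$, one computes $g(\tau(g^{-1}(f(x)))) = g(\s(x)) = f(\s(x)) = (\Int(a)\circ\vt^t)(f(x))$. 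Since $M_n(D)$ contains the basis $\CB$ of $M_n(E)$ over $L$ (Section~\ref{matrix-bases}), the two maps coincide on all of $M_n(E)$. That $f$ and $g$ respect $\Trd$ is Lemma~\ref{trace-morphism}; that the inclusion $M_n(D) \hookrightarrow M_n(E)$ --- and hence, by commutativity and injectivity of $g$, the inclusion $A \hookrightarrow B$ --- respects $\Trd$ follows from $\Trd = \tr$ when the division algebra is a field (Remark~\ref{equal-field}) and from formula~\eqref{reduce-real-trace} in the quaternion case. This establishes \eqref{inclusion-invo2}.

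For \eqref{inclusion-invo3}, observe that the two $L_\CSAI \cup \{\lTrd\}$-structures $(M_n(D), \Int(a)\circ\vt^t, \Trd)$ and $(M_n(E), \Int(a)\circ(\vt')^t, \Trd)$ are interpretable, by the \emph{same} formulas, in $F$ and in $L$ respectively, using as parameters the coordinates of $a$ in the basis $\CB$ (which lie in $F$): the ring operations via the structure constants encoded in $\delta^{(i)}$, the symbol $\lF$ as the symmetric part of the centre, $\ls$ as $\Int(a)$ post-composed with the parameter-free map $\vt^t$, and $\lTrd$ via the reduced-trace formula. As $F$ and $L$ are real closed with $F \subseteq L$, model-completeness of the theory of real closed fields gives $F \prec L$, whence $(M_n(D), \dots) \prec (M_n(E), \dots)$ as $L_\CSAI \cup \{\lTrd\}$-structures. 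Composing with the isomorphisms $f$, $g$ of the commutative diagram of \eqref{inclusion-invo2}, which respect all the relevant structure and satisfy $g\uhr A = f$, shows that the inclusion $(A,\s) \hookrightarrow (B,\tau)$ is elementary in $L_\CSAI \cup \{\lTrd\}$.

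The main obstacle is the claim that a single conjugating element $a$ serves for both $M_n(D)$ and $M_n(E)$: $a$ is produced from $A$ alone by Skolem--Noether, and it is propagated to $B$ by combining the $L$-linearity of all the maps in play with the fact that $M_n(D)$ spans $M_n(E)$ over $L$. Everything else is bookkeeping on top of Lemmas~\ref{inclusion}, \ref{kind-rcf} and \ref{trace-morphism}; the one other point requiring a little care is the compatibility of the reduced traces along $M_n(D) \hookrightarrow M_n(E)$, which is why the quaternion case is treated through the explicit formula~\eqref{reduce-real-trace} rather than through the characterization of Lemma~\ref{axiom-Trd} directly.
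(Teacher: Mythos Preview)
Your proof is correct and follows essentially the same route as the paper: transport via Lemma~\ref{inclusion}, Skolem--Noether for the involution, the basis argument to propagate the conjugating element, and interpretability in real closed fields for elementarity. The only notable differences are cosmetic: for \eqref{inclusion-invo1} the paper argues directly via Lemma~\ref{centre} and a dimension count rather than invoking Lemma~\ref{kind-rcf} after building $g$, and for \eqref{inclusion-invo2} the paper applies Skolem--Noether a second time on $M_n(E)$ to obtain some $b$ and then shows $\Int(a)=\Int(b)$, whereas you verify $g\circ\tau\circ g^{-1}=\Int(a)\circ(\vt')^t$ directly by $L$-linearity---your version is slightly more economical.
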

\begin{proof}
  \begin{enumerate}
    \item By Lemma~\ref{centre}, $Z(A) \subseteq Z(B)$. If $\tau$ is of the
      first kind, then $Z(B) \subseteq \Sym(B,\tau)$ and thus $Z(A) \subseteq
      \Sym(A,\s)$, so that $\s$ is of the first kind.

      If $\s$ is of the first kind, we have $A \cong M_n(F)$ or $M_n((-1,-1)_F)$
      by Lemma~\ref{kind-rcf}, and thus $\dim_F A = n^2$ (in the first case) or
      $\dim_F A=4n^2$ (in the second case). Assume that $\tau$ is of the second
      kind, so that $B \cong M_\ell(F(\sqrt{-1}))$ and thus $\dim_L B =
      2\ell^2$. If $A \cong M_n(F)$, and since $\dim_F A = \dim_L B$, we get
      $n^2=2\ell^2$, impossible. If $A \cong M_n((-1,-1)_F)$ we get $4n^2 =
      2\ell^2$, also impossible.
    \item By Lemma~\ref{kind-rcf}, $\s$ is of the same kind as $\vt$.  Let
      $(E,\vt')$ be the central simple algebra with involution over $L$ defined
      in the statement. Note that the canonical inclusion of $M_n(D)$ in
      $M_n(E)$ respects the reduced trace $\Trd$.

      By Lemma~\ref{inclusion}, we know that there
      is an isomorphism $g$ of $L$-algebras such that the diagram without the involutions is
      commutative. In particular $\tau$ is of the same kind as $\vt'$ by
      Lemma~\ref{kind-rcf}.

      Let $\s'$ be the involution $M_n(D)$ such that $f : (A,\s) \rightarrow
      (M_n(D), \s')$ is an isomorphism of algebras with involution (i.e.,
      $\s' \circ f = f \circ \s$), and $\tau'$ be the involution on $M_n(E)$ such
      that $g : (B, \tau) \rightarrow (M_n(E), \tau')$ is an isomorphism of
      algebras with involution (i.e., $\tau' \circ g = g \circ \tau$). Note
      that $\s'$ is of the same kind as $\s$ (and thus as $\vt$) and that
      $\tau'$ is of the same kind as $\tau$ (and thus as $\vt'$). Using that $g$
      extends $f$, we obtain that $\tau'$ extends $\s'$, so that $(M_n(D), \s')
      \subseteq (M_n(E), \tau')$. 

      By the Skolem-Noether theorem (cf. Proposition~\ref{SK-invo}), $\s' =
      \Int(a) \circ \vt^t$ for some $a \in M_n(D)^\x$, and $\tau' = \Int(b)
      \circ {\vt'}^t$ for some $b \in M_n(E)^\x$. Since $\tau'$ extends $\s'$ we
      have $\Int(a) \circ \vt^t = \Int(b) \circ (\vt')^t$ on $M_n(D)$, so that
      $\Int(a) = \Int(b)$ on $M_n(D)$. If $\CB$ is the basis of $M_n(D)$ over
      $F$ from the start of Section~\ref{matrix-bases}, then $\CB$ is a basis of
      $M_n(E)$ over $L$, and $\Int(a) = \Int(b)$ on $\CB$, so that $\Int(a) =
      \Int(b)$ on $M_n(E)$.  In particular we can take $a=b$, and the diagram 
      indicated in the statement is commutative in the language $L_\CSAI$.

      We still need to check that the maps respect the reduced trace: It is the
      case for $f$ and $g$ by Lemma~\ref{trace-morphism}, it is clear for the
      canonical inclusion of $M_n(D)$ in $M_n(E)$, and it is therefore also the
      case for the inclusion of $(A, \s)$ into $(B, \tau)$ since the diagram
      commutes.

    \item Since the $L_\CSAI$-structures $(M_n(D), \Int(a) \circ \vt^t)$ and
      $(M_n(E), \Int(a) \circ (\vt')^t)$ are interpretable in the same way in
      $F$ and in $L$, and $F \prec L$, we have $(M_n(D), \Int(a) \circ \vt^t)
      \prec (M_n(E), \Int(a) \circ (\vt')^t)$ in $L_\CSAI$. The result follows
      because of the commutativity of the diagram. \qedhere
  \end{enumerate}
\end{proof}

Let
\[\CSA_{m,\rcf} := \CSA_m \cup \{\text{the centre is real closed}\}\]
in the language $L_R$, and
\[\CSAI_{m,\rcf} := \CSAI_m \cup \{\lF \text{ is real closed}\}\]
in the language $L_\CSAI$.

\begin{prop}\label{mod-comp}
  The theories $\CSA_{m,\rcf}$ and $\CSAI_{m,\rcf}$ are model-complete in the
  languages $L_R$ and $L_\CSAI$, respectively.

  More precisely, if $\CM$, $\CN$ are models of $\CSA_{m,\rcf}$ (respectively
  $\CSAI_{m,\rcf}$) and $\CM \subseteq \CN$ in $L_R$ (respectively $L_\CSAI$), then
  $\CM \prec \CN$ in $L_R \cup \{\lP, \lTrd\}$ (respectively $L_\CSAI \cup
  \{\lP, \lTrd\}$), where $\lP$ is interpreted by the unique ordering on $\lF$
  and $\lTrd$ is interpreted by the reduced trace map in all models.
\end{prop}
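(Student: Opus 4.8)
The plan is to derive both statements from Lemmas~\ref{inclusion} and~\ref{inclusion-invo}. Once one checks that the base fields match up and that the two $F$-dimensions agree, those lemmas hand over exactly the asserted elementary inclusions in the languages containing $\lTrd$; adjoining the predicate $\lP$ then costs nothing, since over a real closed field the non-negative elements are precisely the squares. The only input from field theory is Tarski's theorem that the theory of real closed fields is model-complete.

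Consider first $\CSAI_{m,\rcf}$, the more transparent case since here the base field carries the name $\lF$. Let $\CM = (A,\s)$ and $\CN = (B,\tau)$ be models with $\CM \subseteq \CN$ in $L_\CSAI$. By the definition of $L_\CSAI$-substructure we have $F := \lF^{\CM} \subseteq \lF^{\CN} =: L$ (both real closed), $A$ is a subring of $B$, and $\s = \tau \uhr A$; moreover $\dim_F A = m = \dim_L B$ by Corollary~\ref{axiom-csa}. Since $F$ is real closed, $A$ is isomorphic as an $F$-algebra to $M_n(D)$ for one of the pairs $(D,\vt) \in \{(F,\id),(F(\sqrt{-1}),-),((-1,-1)_F,-)\}$: indeed $Z(A)$ is $F$ or $F(\sqrt{-1})$, the only central division algebras over the real closed field $F$ are $F$ and $(-1,-1)_F$, and $F(\sqrt{-1})$ is algebraically closed, hence has no central division algebra other than itself. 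Thus all hypotheses of Lemma~\ref{inclusion-invo} are met, and part~\eqref{inclusion-invo3} yields $\CM \prec \CN$ in $L_\CSAI \cup \{\lTrd\}$.

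For $\CSA_{m,\rcf}$ the only extra step is to reconcile the base fields, which are not named in $L_R$. Let $\CM \subseteq \CN$ in $L_R$, put $K := Z(\CM)$ and $L := Z(\CN)$ (both real closed), so $\CM$ is a $K$-algebra and $\CN$ an $L$-algebra, and fix an isomorphism $f : \CM \to M_n(D)$ of $K$-algebras with $D \in \{K,(-1,-1)_K\}$ (possible as $K$ is real closed). Since $\dim_K \CM = m = \dim_L \CN$ by Corollary~\ref{axiom-csa}, the argument of Lemma~\ref{centre}, applied to $\CM$ through $f$, gives $Z(\CM) = Z(\CN) \cap \CM$, so in particular $K \subseteq L$. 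As $K \subseteq L$ are real closed, model-completeness of the theory of real closed fields gives $K \prec L$, and Lemma~\ref{inclusion}\eqref{inclusion3} then yields $\CM \prec \CN$ in $L_R \cup \{\lTrd\}$. I expect this centre-compatibility step --- the passage from $Z(\CM) \subseteq \CN$ to $Z(\CM) \subseteq Z(\CN)$ --- to be the one genuine point, since a priori an element central in $\CM$ need not be central in the larger ring $\CN$; the rest is bookkeeping with dimensions and structure constants.

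It remains to adjoin the predicate $\lP$. In both settings $\lP$ is definable without parameters over the corresponding $\lTrd$-free language: for $x$ in the structure, $x \in \lP$ if and only if $x$ is central (respectively $x \in \lF$) and $x = y \cdot y$ for some central $y$ (respectively some $y \in \lF$), since in a real closed field the non-negative elements are exactly the squares of elements of that field. Hence elementarity of the inclusion in $L_R \cup \{\lTrd\}$ (respectively $L_\CSAI \cup \{\lTrd\}$) upgrades to elementarity in $L_R \cup \{\lP,\lTrd\}$ (respectively $L_\CSAI \cup \{\lP,\lTrd\}$). Model-completeness of $\CSA_{m,\rcf}$ in $L_R$ and of $\CSAI_{m,\rcf}$ in $L_\CSAI$ follows at once, since a reduct of an elementary embedding is elementary and any embedding between models factors as an isomorphism onto its image followed by a substructure inclusion between models.
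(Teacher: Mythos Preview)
Your proof is correct and follows the same approach as the paper: reduce to Lemma~\ref{inclusion}\eqref{inclusion3} and Lemma~\ref{inclusion-invo}\eqref{inclusion-invo3}, then note that $\lP$ is definable via squares in the (real closed) base field. You are in fact more careful than the paper on one point: for $\CSA_{m,\rcf}$ the paper simply writes ``$\lF^\CM \prec \lF^\CN$ since both are real closed'', tacitly assuming $Z(\CM) \subseteq Z(\CN)$, whereas you justify this inclusion via Lemma~\ref{centre} before invoking model-completeness of real closed fields.
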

\begin{proof}
  Let $\CM, \CN \models \CSA_{m,\rcf}$ in $L_R$, with $\CM \subseteq \CN$ in
  $L_R$. Then $\lF^\CM \prec \lF^\CN$ as fields since both are real closed. The
  result is then Lemma~\ref{inclusion}\eqref{inclusion3} (since the ordering on
  $\lF$ is definable, as the set of all squares in $\lF$). If $\CM \subseteq
  \CN$ in $L_\CSAI$, with $\CM, \CN \models \CSAI_{m,\rcf}$, then $\lF^\CM \prec
  \lF^\CN$ and the result is Lemma~\ref{inclusion-invo}\eqref{inclusion-invo3}.
\end{proof}

Let
\[\CSA_{m,\of} := \CSA_m \cup \{\lP \text{ is an ordering on the centre}\}\]
in the language $L_R \cup \{\lP\}$, and
\[\CSAI_{m,\of} := \CSAI_m \cup \{(\lF, \lP) \text{ is an ordered field}\}\]
in the language $L_\CSAI \cup \{\lP\}$.

\begin{cor}
  The theory $\CSA_{m,\rcf}$ is the model-companion of $\CSA_{m,\of}$.
\end{cor}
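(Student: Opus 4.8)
The plan is to verify directly the three defining properties of a model-companion: that $\CSA_{m,\rcf}$ is model-complete, that every model of $\CSA_{m,\of}$ embeds into a model of $\CSA_{m,\rcf}$, and that every model of $\CSA_{m,\rcf}$ embeds into a model of $\CSA_{m,\of}$. Since $\CSA_{m,\of}$ lives in $L_R \cup \{\lP\}$ whereas $\CSA_{m,\rcf}$ was stated in $L_R$, I would first record that $\CSA_{m,\rcf}$ may be regarded as a theory in $L_R \cup \{\lP\}$ by adjoining the definitional axiom saying that $\lP$ names the set of squares of the centre. Because the centre of a model of $\CSA_{m,\rcf}$ is real closed, $\lP$ then really is interpreted by the unique ordering of the centre, and this ordering is $L_R$-definable uniformly across all models of $\CSA_{m,\rcf}$; combined with Proposition~\ref{mod-comp}, this shows $\CSA_{m,\rcf}$ is still model-complete in $L_R \cup \{\lP\}$ (any $(L_R\cup\{\lP\})$-embedding between models is an $L_R$-embedding, hence $L_R$-elementary, hence $(L_R\cup\{\lP\})$-elementary by definability of $\lP$).

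For the easy direction, a model $(A,\lP^A)$ of $\CSA_{m,\rcf}$ has real closed centre, and a real closed field equipped with its (square) ordering is an ordered field; hence $(A,\lP^A)$ is already a model of $\CSA_{m,\of}$, and the identity is the required embedding.

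For the other direction, let $(A,P)$ be a model of $\CSA_{m,\of}$, so $A$ is central simple of dimension $m$ over $K := Z(A)$ and $P$ is an ordering of $K$. Let $R$ be a real closure of $(K,P)$ (which exists by Artin--Schreier, $K$ being formally real). Then $A \ox_K R$ is central simple over $R$ (central simple algebras remain central simple under base field extension), of dimension $m$ over its centre $R$, and $R$ is real closed; by Lemma~\ref{ax-csa} it is von Neumann regular, so $A \ox_K R$ is a model of $\CSA_{m,\rcf}$. The map $a \mapsto a \ox 1$ is an injective ring homomorphism $A \to A \ox_K R$, and it respects $\lP$: for $a \in A$ one has $a \in P = \lP^A$ iff $a \in K$ and $a \ge_P 0$, iff $a \ox 1 \in R$ and $a \ox 1 \ge_R 0$ (using $A \cap R = K$ inside $A \ox_K R$ and that the ordering of $R$ extends $P$), iff $a \ox 1 \in \lP^{A \ox_K R}$. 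Thus we obtain an $(L_R\cup\{\lP\})$-embedding of $(A,P)$ into a model of $\CSA_{m,\rcf}$.

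Having checked all three conditions, $\CSA_{m,\rcf}$ is the model-companion of $\CSA_{m,\of}$ (uniqueness of the model-companion being automatic). I do not anticipate a serious obstacle: essentially all the content sits in Proposition~\ref{mod-comp}, and the remaining points are the bookkeeping around viewing $\CSA_{m,\rcf}$ in the larger language via the definable ordering and the routine check that extension of scalars to a real closure stays within $\CSA_{m,\rcf}$ and preserves the named ordering.
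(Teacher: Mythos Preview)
Your proposal is correct and follows essentially the same approach as the paper: invoke Proposition~\ref{mod-comp} for model-completeness, then check the cotheory property by extending scalars from $Z(A)$ to a real closure of $(Z(A),P)$. You are more explicit than the paper about the bookkeeping around regarding $\CSA_{m,\rcf}$ in the language $L_R \cup \{\lP\}$ via the definable ordering, which is a welcome clarification but not a different idea.
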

\begin{proof}
  It follows from Proposition~\ref{mod-comp}, since $\CSA_{m,\of}$ and
  $\CSA_{m,\rcf}$ are clearly cotheories ($(A,\s) \subseteq (A \ox_{Z(A)} L, \s
  \ox \id)$ where $L$ is a real closure of $Z(A)$ at its ordering).
\end{proof}

\begin{rem}
  The theory $\CSAI_{m,\rcf}$ is not the model-companion of $\CSAI_{m,\of}$
  because not every model of $\CSAI_{m,\of}$ can be embedded in a model of
  $\CSAI_{m,\rcf}$ in the language $L_\CSAI \cup \{\lP\}$. The problem arises
  when the centre is a proper ordered extension of the field of all symmetric
  elements of the centre (i.e., the interpretation of $\lF$):

  Let $\CA:=(A,\s) \models \CSAI_{m,\of}$ with $F \subsetneq Z(A) =
  F(\alpha_0)$, where $F := \lF^\CA$. We can assume that $\alpha_0$ is a root of
  a polynomial $X^2-d \in F[X]$. The problem occurs when $d \in P := \lP^\CA$
  (the ordering on $F$).  Since $\alpha_0 \not \in F$, we have $\s(\alpha_0)
  \not = \alpha_0$, and thus $\s(\alpha_0) = -\alpha_0$ since $\alpha_0$ is a root of
  $X^2-d$. We show that $(A,\s)$ cannot be an $L_\CSAI \cup
  \{\lP\}$-substructure of a model $\CB := (B,\tau)$ of $\CSAI_{m,\rcf}$:

  Assume it is the case, where $L := \lF^\CB$ is real closed. Then $(L, L^2)$ is
  an ordered extension of $(F,P)$. Since $d \in P$, it has a square root
  $\alpha_1$ in $L$. In particular $\tau(\alpha_1) = \alpha_1$. We consider two
  cases:

  (1) If $\alpha_0 \in Z(B)$. Since $\alpha_1 \in Z(B)$ and both are roots of
  $X^2-d$, we have $\alpha_1 = \ve \alpha_0$ for some $\ve \in \{-1,1\}$, which
  implies $\tau(\alpha_1) = \ve \tau(\alpha_0) = \ve \s(\alpha_0) = -\ve
  \alpha_0 = -\alpha_1$, contradiction.

  (2) If $\alpha_0 \not \in Z(B)$. Then $\alpha_0$ and $-\alpha_0$ are different
  from $\alpha_1$, and thus $\alpha_0, -\alpha_0, \alpha_1$ are three different
  roots of $X^2-d$ in the field $Z(B)(\alpha_0)$, impossible.\medskip
\end{rem}

\begin{rem}\label{star}
  This situation ($Z(A) = F(\alpha_0)$ where $\alpha_0$ is a root of $X^2-d$
  with $d \in P$) leads to another problem when extending the scalars in $A$
  from $F$ to a real closure $L$ of $(F,P)$. Since $d$ has a square root in $L$,
  we obtain
  \begin{align*}
    A \ox_F L &\cong A \ox_{Z(A)} Z(A) \ox_F L \cong A \ox_{Z(A)} L[X]/(X^2-d) \\
      &\cong (A \ox_{Z(A)} L) \x (A \ox_{Z(A)} L),
  \end{align*}
  which is not a simple algebra anymore. This situation does not occur when
  $(A,\s)$ is equipped with a positive cone over $P$, cf.
  Lemma~\ref{ext-simple}.
\end{rem}

\section{Quantifier elimination}

We turn our attention to quantifier elimination with the help of a simple
extension of \cite[Theorem~2.2.4]{kt25} (our Proposition~\ref{qe-mat}). We then
point out some potential problems, which can be avoided by introducing
``orderings'' (positive cones) on algebras with involution, and specifying the
type of the involution (Proposition~\ref{KT-eq}).

\begin{defi}
  Let $(D, \vt)$ be a division algebra with involution over $F$ formally real.
  We say that $(D, \vt)$ is
  \begin{enumerate}
    \item of real type if $(D, \vt) = (F, \id)$;
    \item of complex type if $(D, \vt) = (F(\sqrt{-1}), -)$;
    \item of quaternion type if $(D, \vt) = ((-1,-1)_F, -)$.
  \end{enumerate}
\end{defi}

\begin{lemma}\label{ee-isom}
  Let $(D,\vt) \in \{(F,\id), (F(\sqrt{-1}),-), ((-1,-1)_F,-)\}$ where $F$ is a
  real closed field. Let $\CM$ be elementarily equivalent to $(M_n(D),\vt^t)$ in
  $L := L_\CSAI \cup \{\lP,\lTrd\}$, where $\lP$ is interpreted in $M_n(D)$ by
  the order on $F$, and $\lTrd$ by the reduced trace map.

  Then there is an $L$-isomorphism $\phi_\CM : \CM \rightarrow (M_n(D_\CM),
  (\vt_\CM)^t, \Trd)$, where $(D_\CM, \vt_\CM)$ is a sub-division algebra with
  involution of $\CM$ over $F^\CM$ that is of the same type as $(D, \vt)$ (note
  that $\lF^\CM$ is real closed).
\end{lemma}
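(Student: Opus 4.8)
The plan is to produce the isomorphism $\phi_\CM$ by first extracting from $\CM$ a copy of the canonical matrix basis, then using the quantifier-free axioms $\delta^{(i)}$ from Section~\ref{matrix-bases} to recognize $\CM$ as a matrix algebra, and finally tracking the involution, the base field and the reduced trace through this identification. Concretely, the formula $\delta^{(i)}(E_{r,s},\dots)$ witnessing the matrix-unit relations in $(M_n(D),\vt^t)$ is existential (``there exist elements satisfying this quantifier-free condition''), so since $\CM \equiv (M_n(D),\vt^t)$ in $L$, there are elements $e_{r,s}^{(\bullet)} \in \CM$ with $\CM \models \delta^{(i)}(e_{r,s}^{(\bullet)})$. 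By Lemma~\ref{lin-indep} these are $F^\CM$-linearly independent, and a first-order sentence (true in $M_n(D)$, hence in $\CM$) asserts that they span $\CM$ over $\lF^\CM$, i.e.\ that $\dim_{\lF^\CM}\CM = m$, which is already part of $\CSAI_m$. The structure constants being fixed by $\delta^{(i)}$, the $F^\CM$-subalgebra they generate is a copy of the appropriate $M_n(D_0)$ over the prime-type division algebra $D_0 \in \{F^\CM, F^\CM(\sqrt{-1}), (-1,-1)_{F^\CM}\}$; here I use that $\lF^\CM$ is real closed (it is a model of the real-closed-field axioms, which are part of $\CSAI_{m,\rcf}$ and expressible over $L_\CSAI$ since the ordering $\lP$ is in the language, or alternatively since being real closed is first-order), so $D_0$ is genuinely a division algebra by Remark~\ref{F-div-alg}.

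Next I would pin down the involution and the reduced trace. The map $\phi_\CM$ sending $e_{r,s}^{(\bullet)} \mapsto$ the corresponding canonical basis element of $M_n(D_\CM)$ is then an $F^\CM$-algebra isomorphism onto $M_n(D_\CM)$ with $(D_\CM,\vt_\CM)$ a sub-division-algebra-with-involution of $\CM$ of the prescribed type. To see that $\phi_\CM$ transports $\ls^\CM$ to $\vt_\CM^t$ (up to the data already present), I would argue that the action of $\ls^\CM$ on the distinguished basis is determined by a first-order sentence in $L$: in $(M_n(D),\vt^t)$ the transpose-conjugate involution sends $E_{r,s}^{(x)} \mapsto \vt^t$ of it, a quantifier-free fact about the basis elements, so the same relations hold for the $e_{r,s}^{(\bullet)}$ in $\CM$, forcing $\phi_\CM \circ \ls^\CM = \vt_\CM^t \circ \phi_\CM$. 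Similarly, $\lTrd^\CM$ is characterized by Lemma~\ref{axiom-Trd} ($\lF^\CM$-linear, central-valued, $\lTrd(xy)=\lTrd(yx)$, $\lTrd(1)=\deg$), all first-order over $L$, so $\lTrd^\CM$ must coincide with the reduced trace of $\CM$ viewed as $M_n(D_\CM)$; hence $\phi_\CM$ respects $\lTrd$ by Lemma~\ref{trace-morphism} (applied to $\phi_\CM$, which is an $F^\CM$-algebra isomorphism). Finally $\phi_\CM$ respects $\lP$ because $\lP^\CM$ is forced to be the unique ordering on the real closed field $\lF^\CM$, and $\phi_\CM$ is $F^\CM$-linear and fixes $F^\CM$ pointwise.

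The main obstacle I anticipate is the bookkeeping needed to verify that $(D_\CM, \vt_\CM)$ really is one of the three prescribed types — equivalently, that $\CM$ does not accidentally become a matrix algebra over a \emph{different} division algebra over $\lF^\CM$. This is where one must use both that $\lF^\CM$ is real closed (so the only candidate division algebras are $F^\CM$, $F^\CM(\sqrt{-1})$, $(-1,-1)_{F^\CM}$) and that the \emph{type} of the involution is an elementary invariant: being orthogonal, symplectic, or unitary is expressed by the dimension of $\Sym$ over $\lF$ together with the degree $[Z(A):\lF]$, both first-order and both preserved by $\equiv$ in $L$. Combined with Lemma~\ref{kind-rcf}, which over a real closed base field ties the type of the involution to the isomorphism type of $A$, this forces $(D_\CM,\vt_\CM)$ to have the same type as $(D,\vt)$. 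A secondary technical point is ensuring the $\delta^{(i)}$-witnesses actually generate \emph{all} of $\CM$ and not a proper subalgebra: this follows because $\dim_{\lF^\CM}\CM = m$ is part of $\CSAI_m$ and the witnesses, being linearly independent of the right cardinality $m$, form a basis. Once these points are settled, assembling $\phi_\CM$ and checking it is an $L$-isomorphism is routine diagram-chasing, entirely parallel to the proof of Lemma~\ref{inclusion-invo}\eqref{inclusion-invo2}.
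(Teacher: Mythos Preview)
Your overall strategy coincides with the paper's: transfer to $\CM$ a single existential $L$-sentence asserting the existence of matrix units (and, in the non-real cases, generators of the division algebra) on which the involution acts as $\vt^t$, then send these witnesses to the canonical basis of $M_n(D_\CM)$. The paper packages this into one formula $\Omega$ (one for each type) which simultaneously asserts $\delta^{(1)}$ on $e_{11},\ldots,e_{nn}$, the structure of the centre, the existence of $i$ (and $j,k$) with the quaternion relations, that these form a basis, and crucially that $\ls = \vt^t$ relative to this basis; then $\CM \models \Omega$ yields all witnesses at once.

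Your write-up, however, has a genuine quantifier slip. You first choose $e_{r,s}^{(\bullet)} \in \CM$ satisfying only $\delta^{(i)}$, and then argue separately that ``the action of $\ls^\CM$ on the distinguished basis is determined by a first-order sentence \ldots\ a quantifier-free fact about the basis elements, so the same relations hold for the $e_{r,s}^{(\bullet)}$ in $\CM$''. This does not follow: elementary equivalence transfers sentences, not open formulas evaluated at unrelated tuples. Concretely, in $(M_n(F),{}^t)$ there are many systems $(e_{r,s})$ satisfying $\delta^{(1)}$ --- take $e_{r,s} := P E_{r,s} P^{-1}$ for any $P \in GL_n(F)$ --- and $(e_{r,s})^t = e_{s,r}$ holds only when $P^t P$ is scalar. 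So an arbitrary witness of $\exists \bar e\,\delta^{(i)}(\bar e)$ in $\CM$ need not be compatible with $\ls^\CM$. The fix is exactly what the paper does: put the involution condition (and the identification of $D_\CM$ via $i,j,k$) inside the existential before choosing witnesses. Once you do that, the rest of your argument --- Lemma~\ref{lin-indep} for independence, $\dim_{\lF^\CM}\CM = m$ for spanning, Lemma~\ref{axiom-Trd} for $\lTrd$, and uniqueness of the ordering on $\lF^\CM$ for $\lP$ --- goes through and matches the paper's proof.
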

\begin{proof}
  Let $\CN$ be the $L_\CSAI \cup \{\lP, \lTrd\}$-structure $(M_n(D),\vt^t)$.  It
  follows from Lemma~\ref{axiom-Trd} that ``$\lTrd \text{ is the reduced
  trace}$'' can be expressed by a first-order formula (depending on $n$ and $D$).
  We define three $L_\CSAI \cup \{\lP, \lTrd\}$-formulas (with reference to
  Section~\ref{matrix-bases} for the formula $\delta^{(1)}$):
  \begin{align*}
    \Omega_{\text{orthogonal}} :=&\ (\text{centre = } \lF) \wedge \lTrd \text{
      is the reduced trace } \wedge \\
    &\exists e_{11}, \ldots, e_{nn} \
    \Bigl\{\delta^{(1)}(e_{11}, \ldots, e_{nn}) \wedge \{e_{11}, \ldots, e_{nn}\}
    \text{ is an $\lF$-basis } \wedge \\
    &\qquad \qquad \qquad \ls = {}^t \Bigr\}\\
    \Omega_{\text{unitary}} :=&\ \exists i \ \Bigl\{i^2=-1 \wedge (\text{centre
      = } \lF(i)) \wedge \lTrd \text{ is the reduced trace } \wedge \\
    &\exists e_{11}, \ldots, e_{nn} \
      \delta^{(1)}(e_{11}, \ldots, e_{nn}) \wedge \\
    &\{e_{11}, \ldots, e_{nn}\} \text{ is a basis over the centre } \wedge 
      \ls = -^t \Bigr\} \\
    \Omega_{\text{symplectic}} :=&\ (\text{centre = } \lF) \wedge \lTrd \text{
      is the reduced trace } \wedge \\
    &\exists i,j,k \exists e_{11}, \ldots, e_{nn}\ \Bigl\{ i^2=j^2=-1 \wedge
    ij=-ji=k \ \wedge \\
    & \Span_{\lF}\{1,i,j,k\} \text{ is a division algebra}\ \wedge
     \delta^{(1)}(e_{11}, \ldots, e_{nn})\  \wedge\\
    & i,j,k \text{ commute with }e_{11}, \ldots, e_{nn}\ \wedge \\
    & \{e_{11}, \ldots, e_{nn}\}
      \text{ is a basis over } \Span_{\lF}\{1,i,j,k\} \wedge 
      \ls = -^t\Bigr\}
  \end{align*}
  Let $\Omega$ be the only one of the above three formulas such that $\CN
  \models \Omega$. Then $\CM \models \Omega$ and the map $\phi_\CM$ is $\CM
  \rightarrow M_n(D_\CM)$, $d \in D_\CM \mapsto d$, $e_{rs} \mapsto E_{rs}$ (the
  matrix with 1 at entry $(r,s)$ and $0$ elsewhere). The map $\phi_\CM$ is an
  isomorphism of $\lF^\CM$-algebras since the formula $\Omega$ specifies the
  structure constants. It therefore respects the reduced trace, and it is clear
  that it respects the ordering on $\lF^\CM$.  Finally, $\lF^\CM$ is real closed
  since $\lF^\CM \equiv \lF^\CN$.
\end{proof}

The following result is a special case of \cite[Theorem~2.2.4]{kt25} in the case
of matrices over real closed fields and algebraically closed fields, but also
covers the additional case of matrices over real quaternions. While the proof is
the same as that of \cite[Theorem~2.2.4]{kt25} (using
Theorem~\ref{Specht-property} instead of \cite[Theorem~2.2.2]{kt25} in order to
include the quaternion case), we still reproduce the relevant parts of it for
the convenience of the reader, since it is reasonably short.

\begin{prop}\label{qe-mat}
  Let $(D,\vt) \in \{(F,\id), (F(\sqrt{-1}),-), ((-1,-1)_F,-)\}$, where $F$ is a
  real closed field. Let $T$ be the theory (without parameters) of
  $(M_n(D),\vt^t)$ in $L := L_\CSAI \cup \{\lP, \lTrd\}$, where $\lP$ is
  interpreted as the order on $F$, and $\lTrd$ as the reduced trace map.

  Then $T$ has quantifier elimination in $L$.
\end{prop}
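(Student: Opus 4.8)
The plan is to use the standard test for quantifier elimination: it suffices to show that for any two models $\CM$, $\CN$ of $T$, any common $L$-substructure $\CC$ of both, and any element $c \in \CM$, the quantifier-free type of $c$ over $\CC$ (as realized in $\CM$) is also realized in $\CN$; equivalently, that any isomorphism between finitely generated substructures extends, after passing to $\omega$-saturated extensions. Concretely, I would first apply Lemma~\ref{ee-isom} to both $\CM$ and $\CN$ to reduce to the case where $\CM = (M_n(D_\CM), (\vt_\CM)^t, \Trd)$ and $\CN = (M_n(D_\CN), (\vt_\CN)^t, \Trd)$ with $D_\CM$, $D_\CN$ sub-division algebras of the same type as $D$ over real closed fields $F^\CM$, $F^\CN$; quantifier elimination for real closed fields lets me further arrange (after saturating) that $F^\CM$ and $F^\CN$ have a common real closed subfield over which the given substructure $\CC$ lives, and in fact that $\CC$ sits inside a common copy $M_n(D_0)$ for a division algebra $D_0$ of the right type over a real closed field $F_0 \subseteq F^\CM \cap F^\CN$.

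The heart of the argument is then a back-and-forth between $M_n(D_\CM)$ and $M_n(D_\CN)$ over the common subalgebra, and this is exactly where Theorem~\ref{Specht-property} enters. Given a tuple of matrices $X \in M_n(D_0)^d$ generating $\CC$ together with the field data, and given $c \in M_n(D_\CM)$, I want to find $c' \in M_n(D_\CN)$ with the same quantifier-free $L$-type over $\CC$. Using $\omega$-saturation of $\CN$ it is enough to match finitely many quantifier-free formulas at a time; after absorbing the field-theoretic part via real-closedness, the remaining content of the quantifier-free type of $(X, c)$ is: the real-closed-field type of all reduced traces $\Trd(w(X, c, \vt(X)^t, \vt(c)^t))$ of words $w$, together with the polynomial (in)equalities these satisfy over $F_0$ and with the ordering $\lP$. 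Theorem~\ref{Specht-property}, applied inside a large real closed field containing both, says that a tuple of matrices over $M_n(D)$ is determined up to $\vt$-unitary conjugation by the reduced traces of words in it (and its $\vt$-transpose) of length at most $n^2$; so the word-trace data — which is part of the quantifier-free $L$-type since $\lTrd$ is in the language — pins down $(X, c)$ up to a $\vt$-unitary $O$. Transporting $c$ along such an $O$ (which exists in a model of $T$ because ``there is a $\vt$-unitary $O$ with prescribed conjugation action'' is first-order and holds in $\CM$, hence, by elementary equivalence and the matching trace data, in $\CN$) produces the required $c'$.

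The main obstacle, and the step that needs the most care, is bookkeeping the reduction to a \emph{common} ground division algebra and checking that all the relevant data — the field $\lF$, its ordering $\lP$, the centre, the involution $\ls$, and the reduced trace $\lTrd$ — are genuinely captured by quantifier-free $L$-formulas, so that "same quantifier-free type over $\CC$" really does reduce to "same word-trace data over a common real closed base, up to $\vt$-unitary conjugation." Here one uses that $\lTrd$ is a function symbol in $L$ (so traces of words are terms), that the ordering is quantifier-free definable via squares in the real closed case, and that the structure constants and the basis $\CB$ of Section~\ref{matrix-bases} are specified by the quantifier-free formulas $\delta^{(i)}$, exactly as in the proof of Lemma~\ref{ee-isom}. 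Once this is in place, the two directions of the back-and-forth are symmetric and the existence of the conjugating $\vt$-unitary is handed to us by Theorem~\ref{Specht-property} together with the first-order expressibility of clause~\eqref{SP3} there. This is precisely the scheme of the proof of \cite[Theorem~2.2.4]{kt25}, with Theorem~\ref{Specht-property} substituted for \cite[Theorem~2.2.2]{kt25} so as to cover the quaternion type as well, and I would reproduce only these substituted portions in detail.
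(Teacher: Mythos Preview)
Your proposal is correct and takes essentially the same route as the paper: both reduce to matrix algebras via Lemma~\ref{ee-isom}, amalgamate the real closed base fields using quantifier elimination for real closed fields, and then invoke Theorem~\ref{Specht-property} to produce the required $\vt$-unitary conjugation --- precisely the scheme of \cite[Theorem~2.2.4]{kt25} with the Specht-type theorem upgraded to cover the quaternion case. The only difference is packaging: the paper first records that $T$ is model-complete (via Proposition~\ref{mod-comp}) and then verifies the amalgamation property over finitely generated substructures, which makes the step you phrase loosely as ``by elementary equivalence \ldots\ in $\CN$'' go through cleanly, whereas in your back-and-forth formulation that step really requires either model-completeness or the observation that the whole structure is uniformly interpretable in its real closed base field.
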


\begin{proof}
  We first observe that $T$ is model-complete: Let $\CM$, $\CN$ be models of $T$
  such that $\CM \subseteq \CN$ as $L$-structures. Then $\CM$, $\CN$ are models
  of $\CSAI_{m,\rcf}$ for some $m$ and thus $\CM \prec \CN$ in $L_\CSAI \cup
  \{\lP, \lTrd\}$ by Proposition~\ref{mod-comp}.

  Therefore, it suffices to show that it has the amalgamation
  property over finitely generated substructures (this follows from
  \cite[Theorem~3.1.4]{marker02}, where the proof of ii) implies i) shows that
  the common substructure can be taken finitely generated). Let $\CM, \CN
  \models T$ and let $\CA$ be a finitely generated $L$-substructure of $\CM$ and
  $\CN$.

  By Lemma~\ref{ee-isom}, there are $L$-isomorphisms
  \[\phi : \CM \rightarrow (M_n(D_\CM), {\vt_\CM}^t, \Trd) \text{ and } \psi : \CN
  \rightarrow (M_n(D_\CN), {\vt_\CN}^t, \Trd)\]
  where $(D_\CM, \vt_\CM)$ and $(D_\CN, \vt_\CN)$ are over the real closed
  fields $F_\CM$ and $F_\CN$, respectively, and are of the same type as $(D,
  \vt)$. Therefore, we have
  \[\xymatrix{
    (M_n(D_\CM), {\vt_\CM}^t, \Trd) & & (M_n(D_\CN), {\vt_\CN}^t, \Trd) \\
    & \CA \ar[ul]^{\phi \uhr \CA} \ar[ur]_{\psi \uhr \CA}
  }\]
  Let $R$ be the subring of $\CA$ generated by the image of $\lTrd^\CA = \lTrd^\CM
  \!\uhr \CA$. It is commutative since $\phi(R)$ is
  included in the field $Z(\CM)$. Since the theory of real closed fields has quantifier
  elimination, it has the amalgamation property over substructures
  (\cite[Proposition~3.5.19]{chang-keisler}), so there are
  a real closed field $\Omega$ and $L_R$-embeddings $\ve: F_\CM \rightarrow
  \Omega$, $\delta : F_\CN \rightarrow \Omega$ such that the following diagram
  commutes:
  \[\xymatrix{
       & \Omega &   \\
     F_\CM \ar[ur]^\ve &  & F_\CN \ar[ul]_\delta \\
       & R \ar[ul]^{\phi \uhr R} \ar[ur]_{\psi \uhr R}
     }\]
   Let $(D_\Omega, \vt_\Omega)$ be the division algebra with involution over
   $\Omega$ of the
   same type as $(D,\vt)$ (and thus as $(\CM, \vt_\CM)$ and $(\CN, \vt_\CN)$). Both $\ve$ and $\delta$ induce canonical maps
   from $(D_\CM, \vt_\CM)$ to $(D_\Omega, \vt_\Omega)$ and from $(D_\CN, \vt_\CN)$ to
   $(D_\Omega, \vt_\Omega)$, and thus canonical maps
   \[\ve_n : (M_n(D_\CM), {\vt_\CM}^t) \rightarrow (M_n(D_\Omega), {\vt_\Omega}^t)
   \text{ and } \delta_n : (M_n(D_\CN), {\vt_\CN}^t)
   \rightarrow (M_n(D_\Omega), {\vt_\Omega}^t).\]
   It is clear that $\ve_n$ and $\delta_n$ are
   $L$-embeddings.\medskip

   Since $\CA$ is finitely generated, there are $X_1, \ldots, X_d \in \CA$ such
   that $\CA$ is generated by $X_1, \ldots, X_d$ as $L$-structure.\medskip

   {\bf Claim:} There is $O \in M_n(D_\Omega)$ such that $\vt_\Omega(O)^t O = I_n$ and
   $\vt_\Omega(O)^t \ve_n(\phi(X_i)) O = \delta_n(\psi(X_i))$ for $i=1, \ldots, d$.

   {\bf Proof of the claim:} Let $Y_i := \ve_n(\phi(X_i))$ and $Z_i :=
   \delta_n(\phi(X_i))$. By Theorem~\ref{Specht-property} it suffices to show that
   \begin{equation}
     \begin{aligned}\label{to-show}
       \Trd_{M_n(D_\Omega)}(w(Y_1, \ldots, Y_d, &\vt_\Omega(Y_1)^t, \ldots, \vt_\Omega(Y_d)^t)))
       =\\
       & \Trd_{M_n(D_\Omega)}(w(Z_1, \ldots, Z_d, \vt_\Omega(Z_1)^t, \ldots, \vt_\Omega(Z_d)^t)))
     \end{aligned}
   \end{equation}
   for every word
   $w(x_1, \ldots, x_d, x'_1, \ldots, x'_d)$.
   Consider
   \[X := w(X_1, \ldots, X_d, \ls^\CM(X_1), \ldots, \ls^\CM(X_d)) \in \CA.\]
   We have
   \begin{align*}
     \Trd_{M_n(D_\Omega)}(\ve_n(&\phi(X))) = \ve(\Trd_{M_n(D_\CM)}(\phi(X))) \qquad \text{(by
         definition of $\ve_n$)} \\
       &= \ve(\phi(\lTrd^\CA(X))) 
        \qquad \text{(since $\phi$ is an $L$-morphism)} \\
       &= \delta(\psi(\lTrd^\CA(X))) 
        \qquad \text{(since $\ve \circ \phi \uhr R = \delta \circ \psi \uhr R$)},
   \end{align*}
   while
   \begin{align*}
     \Trd_{M_n(D_\Omega)}(\delta_n(&\psi(X))) = \delta(\Trd_{M_n(D_\CN)}(\psi(X))) \qquad \text{(by
         definition of $\delta_n$)} \\
       &= \delta(\psi(\lTrd^\CA(X))) 
        \qquad \text{(since $\psi$ is an $L$-morphism)}.
   \end{align*}
   Therefore $\Trd_{M_n(D_\Omega)}(\ve_n(\phi(X))) =
   \Trd_{M_n(D_\Omega)}(\delta_n(\psi(X)))$, proving \eqref{to-show} since
   $\phi$, $\psi$, $\ve_n$ and $\delta_n$ are $L$-embeddings.
   {\bf End of the proof of the Claim.}\medskip

   Taking $O$ as in the claim, the map $\xi : M_n(D_\Omega) \rightarrow
   M_n(D_\Omega)$, $X \mapsto {\vt_\Omega(O)}^t X O$ is an $L$-automorphism of
   $M_n(D_\Omega)$ (since the reduced trace is invariant under isomorphisms of
   $Z(D_\Omega)$-algebras). Using the Claim, we have $\xi \circ \ve_n \circ \phi
   = \delta_n \circ \psi$ on $\CA$, i.e., the maps $\xi \circ \ve_n \circ \phi
   \uhr \CA$ and $\delta_n \circ \psi \uhr \CA$ form an amalgamation of $\CM$
   and $\CN$ over $\CA$.
\end{proof}

We will now consider quantifier elimination for some theories of central simple
algebras with involution. We are interested in situations where the base field
is ordered, so we will naturally have these theories specify that it is real
closed. The main two difficulties are presented in the following remark.

\begin{rem}\label{qe-problems}
  Assume that $T$ is a theory of central simple algebra with involution, such
  that $\CSAI_{m,\rcf} \subseteq T$ and $T$ admits quantifier elimination:
  
  \begin{enumerate}
    \item\label{problem1} If structures like $(M_{2n}(F),{}^t)$ and
      $(M_n((-1,-1)_F), -^t)$ are models
      of $T$. In this case the amalgamation property applied to
      \[\xymatrix{
        (M_{2n}(F),{}^t) & & (M_n((-1,-1)_F), -^t) \\
        & (\Q, \id) \ar[ul] \ar[ur]
      }\]
      would give a common elementary extension of the structures $(M_{2n}(F),{}^t)$ and
      $(M_n((-1,-1)_F), -^t)$, which is not possible since the two involutions are
      of different types (which can be expressed by a first-order property in
      the language $L_R \cup \{\ls\}$). So we need to avoid such (or similar) situations.
  
    \item\label{problem2} A model $\CM$ of $T$ with $\lF^\CM$ real closed will
      be isomorphic to $(M_n(D), \Int(a) \circ \vt^t)$ (using the notation of
      Lemma~\ref{inclusion-invo}) for some $a \in M_n(D)^\x$. In order to use
      Proposition~\ref{qe-mat}, we would like to be able to go back to an 
      algebra of the type $(M_n(D'), {\vt'}^t)$ (for some division algebra with
      involution $(D', \vt')$ of the same type as $(D, \vt)$), so we need a way
      to get some control on how the involution is scaled by $\Int(a)$.
  
  \end{enumerate}
\end{rem}
We will see in Section~\ref{with-type} that specifying the type of the
involution (logically) solves the first problem. The second will be solved by
introducing a positive cone, which will (on top of specifying an ordering on the
base field) give some control over the involution, due to the links between
positive cones and positive involutions
(Remark~\ref{results-pc}\eqref{results-pc1} and
Corollary~\ref{bar-tr}\eqref{bar-tr1}).

\subsection{Positive cones and positive involutions}

Positive cones on algebras with involution have been introduced in \cite{au20}
as an attempt to define a notion of ordering that corresponds to signatures of
hermitian forms. They are also closely linked to positive involutions.
\begin{defi}[{\cite[Definition~3.1]{au20}}]\label{def-preordering}
  Let $(A,\s)$ be a central simple algebra with involution over $F$. A
  \emph{prepositive cone $\CP$ on }$(A,\s)$ is a subset $\CP$ of $\Sym(A,\s)$
  such that 
  \begin{enumerate}[(P1)]
    \item $\CP \not = \varnothing$;
    \item $\CP + \CP \subseteq \CP$; 
    \item $\s(a) \cdot \CP \cdot a \subseteq \CP$ for every $a \in A$;
    \item $\CP_F := \{u \in F \mid u\CP \subseteq \CP\}$ is an ordering on $F$.
    \item $\CP \cap -\CP = \{0\}$ (we say that $\CP$ is proper).
  \end{enumerate}
  A prepositive cone $\CP$ is over $P\in X_F$ (the set of all orderings on $F$)
  if $\CP_F=P$, and a positive cone is a prepositive cone that is  maximal with
  respect to inclusion. 
\end{defi}

We recall the following long list of results about positive cones. Most of them
are direct or appear in some other papers.
\begin{rem}\label{results-pc}
  \begin{enumerate}
    \item\label{results-pc0} If $\CP$ is a positive cone on $(A,\s)$ and $a \in
      \CP \setminus \{0\}$, then $\CP_F = \CP_F' := \{u \in F \mid ua \in
      \CP\}$. Indeed, we clearly have $\CP_F \subseteq \CP_F'$, and if $u \in
      \CP_F' \setminus \CP_F$, then $-u \in \CP_F$ and thus $ua, -ua \in \CP$,
      contradicting (P5).
    \item\label{results-pc1} It is possible for a positive cone not to
      contain the element $1$.  This depends on the involution (see
      Proposition~\ref{pos-invo}\eqref{pos-invo1}). For instance, if $a \in \Sym(A, \s) \cap
      A^\x$, then $\CP$ is a positive cone on $(A,\s)$ if and only if $a\CP$ is
      a positive cone on $(A,\Int(a) \circ \s)$ (\cite[Proposition~4.4]{au20}).
      This makes it easy to produce positive cones that contain the
      element $1$ and positive cones that do not.
    \item\label{results-pc2} If $P \in X_F$ there may not be a positive cone over $P$. The set of
      orderings over which there are no positive cones is the set $\Nil[A,\s]$
      of orderings in $X_F$ for which the signature of all hermitian forms is
      zero (\cite[Proposition~6.6]{au20}). The set $\Nil[A,\s]$ is actually a clopen subset of
      $X_F$ (\cite[Corollary~6.5]{au14}).
    \item\label{results-pc4} For $P \in X_F \setminus \Nil[A,\s]$, there are
      exactly two positive cones over $P$. If $\CP$ is one of them, the other
      one is $-\CP$ (\cite[Theorem~7.5]{au20}). This freedom to choose the sign
      comes from the link between signatures of hermitian forms and positive
      cones, and corresponds to the fact that the signature of hermitian forms
      at $P$ is only determined up to sign (\cite[Start of Section~3.3]{au14}).
    \item\label{results-pc3} Let $(D,\vt) \in \{(F,\id), (F(\sqrt{-1}),-),
      ((-1,-1)_F,-)\}$ with $F$ real closed. The only two positive cones on
      $(M_n(D),\vt^t)$ over the unique ordering of $F$ are $\PSD$ and $\NSD$,
      the sets of positive semidefine, respectively negative semidefinite
      matrices. This follows from the previous item since it can be checked that
      the set of $\PSD$ matrices is a positive cone over the unique ordering of
      $F$ (it is clearly a pre-positive cone; if $\PSD \subsetneq \CP$ with
      $\CP$ positive cone, using the fact that symmetric matrices can be
      diagonalized by congruences, we can assume by (P3) that $\CP$ contains a
      diagonal matrix with at least one negative entry. Using (P3) to only keep
      this negative entry we then obtain a non-zero NSD matrix in $\CP$,
      contradicting (P5)).
    \item\label{results-pc5} For $(D, \vt)$ as in \eqref{results-pc3}, the set of positive
      semidefinite matrices in $M_n(D)$ is equal to $\HS(M_n(D), \vt^t) :=
      \{\vt(a)^ta \mid a \in M_n(D)\}$, the set of hermitian squares in
      $(M_n(D).\vt^t)$. This is due to the principal axis theorem (which also
      holds for quaternions, cf.  \cite[Corollary~6.2]{zhang97}).

      We will also more generally consider hermitian squares in an algebra with
      involution $(A,\s)$ and write
      \[\HS(A,\s) := \{\s(a)a \mid a \in A\}.\]
  \end{enumerate}
\end{rem}

\begin{defi}[{\cite[Definition~1.1]{ps76}}]
  Let $(A,\s)$ be a central simple algebra with involution over $F$, and let $P
  \in X_F$. The involution $\s$ is called positive at $P$ if the form $A\x A\to
  Z(A),\ (x,y)\mapsto \Trd(\s(x)y)$ is positive semidefinite at $P$ (hence
  positive definite at $P$ since it is nonsingular). For more details we refer
  to \cite[Section~4]{au18}. 
\end{defi}

We recall the following, which is obtained out of various results in
\cite{au18, au20}:
\begin{prop}\label{pos-invo}
  Let $F$ be a formally real field.
  \begin{enumerate}
    \item\label{pos-invo1} Let $(A,\s)$ be a central simple algebra with involution over $F$ and
      let $P \in X_F$. Then $\s$ is positive at $P$ if and only if there is a
      positive cone $\CP$ on $(A,\s)$ over $P$ such that $1 \in \CP$.
  \end{enumerate}
  Let $(D,\vt) \in \{(F,\id), (F(\sqrt{-1}),-), ((-1,-1)_F,-)\}$.
  \begin{enumerate}\setcounter{enumi}{1}
    \item The involution $\vt^t$ on $M_n(D)$ is positive at every $P \in X_F$.
    \item Let $a \in \Sym(M_n(D),\vt^t)^\x$ be such that $\Int(a) \circ \vt^t$ is 
      positive at $P$. Then $a$ is a PSD or NSD matrix in $M_n(D)$ (and, up to
      replacing $a$ by $-a$, we can assume that $a$ is PSD).
  \end{enumerate}
\end{prop}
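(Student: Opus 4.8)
The plan is to prove the three items of Proposition~\ref{pos-invo} by reducing, via scalar extension to a real closure, to the case where $F$ is real closed, and then invoking the explicit description of positive cones on $(M_n(D),\vt^t)$ from Remark~\ref{results-pc}\eqref{results-pc3}--\eqref{results-pc5} together with the transport-of-structure result Remark~\ref{results-pc}\eqref{results-pc1}. Item~\eqref{pos-invo1} is the conceptual core and I expect it to follow from the cited papers \cite{au18,au20}: one direction, that a positive cone $\CP$ over $P$ with $1\in\CP$ forces $\s$ to be positive at $P$, comes from the fact that (P3) applied with arbitrary $a$ gives $\s(a)a=\s(a)\cdot 1\cdot a\in\CP$, so $\HS(A,\s)\subseteq\CP$, and then one checks that membership in a proper cone over $P$ forces the trace form $(x,y)\mapsto\Trd(\s(x)y)$ to take values in $P$ on the diagonal (using that this form diagonalizes over a splitting field and that its positivity is detected by signatures of hermitian forms, which is exactly the content of \cite[Section~4]{au18}); the converse is \cite[Proposition~6.6 or Theorem~7.5]{au20} combined with the observation that the trace form being positive definite at $P$ produces a prepositive cone over $P$ containing $1$, which extends to a positive cone still containing $1$ by the argument that $1$ lies in every maximal extension once it lies in a prepositive cone whose $\CP_F$ equals $P$.

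First I would prove item~(2). Since positivity of $\vt^t$ at $P\in X_F$ only concerns the behaviour of the trace form at $P$, and the trace form is defined over $F$, we may extend scalars to a real closure $L$ of $(F,P)$: by Remark~\ref{star}-type reasoning the algebra $(M_n(D),\vt^t)\otimes_F L$ is $(M_n(D_L),(\vt_L)^t)$ with $(D_L,\vt_L)$ of the same type, and positivity at $P$ is equivalent to positive-definiteness of the extended trace form over the real closed field $L$. Over $L$, Remark~\ref{results-pc}\eqref{results-pc3} says $\PSD$ is a positive cone over the unique ordering, and it plainly contains $I_n$; hence by item~(1) (already proved, or invoked for the real closed case where it is elementary) $\vt_L{}^t$ is positive, so $\vt^t$ is positive at $P$. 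Alternatively, and more directly, one computes that the trace form of $(M_n(D),\vt^t)$ is a positive multiple of a sum of forms $\qf{1,\dots,1}$ (using \eqref{reduce-real-trace} and Lemma~\ref{Trd-tr} in the quaternion case) and is therefore positive definite at every ordering of $F$; I would present this direct computation as the proof of~(2) since it needs no appeal to~(1).

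For item~(3): suppose $a\in\Sym(M_n(D),\vt^t)^\times$ and $\Int(a)\circ\vt^t$ is positive at $P$. By Proposition~\ref{pos-invo}\eqref{pos-invo1} there is a positive cone $\CQ$ on $(M_n(D),\Int(a)\circ\vt^t)$ over $P$ with $1\in\CQ$. By Remark~\ref{results-pc}\eqref{results-pc1} (the case $a\in\Sym\cap A^\times$, cf. \cite[Proposition~4.4]{au20}), $\CP:=a^{-1}\CQ$ is a positive cone on $(M_n(D),\vt^t)$ over $P$, and it contains $a^{-1}\cdot 1=a^{-1}$. Again extending scalars to a real closure $L$ of $(F,P)$, this positive cone extends to a positive cone on $(M_n(D_L),(\vt_L)^t)$ over the unique ordering of $L$, which by Remark~\ref{results-pc}\eqref{results-pc3} is $\pm\PSD$; hence $a^{-1}$, and therefore $a$, is PSD or NSD as a matrix over $D_L$, which forces $a$ to be PSD or NSD already over $D$ since definiteness of a symmetric matrix at $P$ is preserved and reflected by the extension $F\hookrightarrow L$ (it is detected by signs of principal minors, or by signs of eigenvalues which are algebraic over $F$). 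Replacing $a$ by $-a$ if necessary, we may assume $a$ is PSD. The main obstacle is item~\eqref{pos-invo1}: it genuinely repackages results of \cite{au18,au20} about the correspondence between positive involutions, positive cones, and signatures, and a self-contained proof would require recalling that machinery, so in the write-up I would cite those papers and give only the short argument that $\HS(A,\s)\subseteq\CP$ whenever $1\in\CP$, which is the part the reader can verify immediately.
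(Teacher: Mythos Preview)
Your proposal is largely correct and, for items~(2) and~(3), takes a genuinely different route from the paper.

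For item~(1), both you and the paper ultimately cite \cite{au18,au20}; the paper's argument is the chain \cite[Corollary~4.6]{au18} ($\s$ positive at $P$ $\Leftrightarrow$ $|\sign^\eta_P\qf{1}_\s|=n_P$), \cite[Theorem~7.5]{au20} ($|\sign^\eta_P\qf{1}_\s|=m_P$ $\Leftrightarrow$ $1$ lies in some positive cone over $P$), and $m_P=n_P$ from \cite[Proposition~6.7]{au20}. Your sketched direct argument for the forward implication has a real gap: from $\HS(A,\s)\subseteq\CP$ you want $\Trd(\s(x)x)\in P$, i.e.\ $\Trd(\CP)\subseteq P$, and that is not an axiom of positive cones---it needs exactly the signature machinery you are trying to bypass. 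Since you plan to cite anyway this is not fatal, but drop the heuristic or replace it by the paper's chain of citations.

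For item~(2), the paper simply applies item~(1), observing that $1\in\PSD$ and $\PSD$ is a positive cone over $P$ (invoking Remark~\ref{results-pc}\eqref{results-pc3}, which strictly speaking is stated only for real closed $F$). Your direct computation---showing $\Trd(\vt(x)^t x)$ is a sum of squares in $F$ in all three cases---is cleaner and self-contained, and avoids that slight imprecision.

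For item~(3), the paper argues via signatures: it invokes \cite[Proposition~4.8]{au18} to get $\sign^\eta_P\qf{a^{-1}}_{\vt^t}=\pm n_P$, notes $n_P=n$ for these specific algebras, uses $\qf{a^{-1}}_{\vt^t}\cong\qf{a}_{\vt^t}$, and concludes via the identification of $\sign^\eta_P$ with the Sylvester signature. Your route instead stays inside the positive-cone formalism: from a cone $\CQ\ni 1$ on $(M_n(D),\Int(a)\circ\vt^t)$ you get $a^{-1}\CQ\ni a^{-1}$ on $(M_n(D),\vt^t)$ via \cite[Proposition~4.4]{au20}, then extend scalars to a real closure (cite \cite[Proposition~5.8]{au20} for the extension of positive cones) and use Remark~\ref{results-pc}\eqref{results-pc3} to force $a^{-1}\in\pm\PSD$. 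This is a legitimate alternative; it trades one external citation (\cite[Proposition~4.8]{au18}) for another (\cite[Proposition~5.8]{au20}), and has the advantage of reusing exactly the transport-of-structure ingredients the paper already needs elsewhere.
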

\begin{proof}
  \begin{enumerate}
    \item By \cite[Corollary~4.6]{au18} $\s$ is positive at $P$ if and only if
      $|\sign^\eta_P \qf{1}_\s| = n_P$. And by \cite[final line of
      Theorem~7.5]{au20}, $|\sign^\eta_P \qf{1}_\s| = m_P$ if and only if $1$
      belongs to some positive cone over $P$. The link between both statements
      comes from the fact that $m_P = n_P$, cf. \cite[Proposition~6.7]{au20}.
    \item Let $P \in X_F$. The result follows from the first item, since $1 \in
      \PSD$ which is a positive cone on $(M_n(D), \vt^t)$ over $P$ by
      Remark~\ref{results-pc}\eqref{results-pc3}.
  \end{enumerate}
  We make two observations for the proof of the final item:
  \begin{enumerate}[(i)]
    \item\label{o1} In the special case of $(M_n(D),\vt^t)$ with $(D,\vt)$ as indicated in
      the statement, the signature of a hermitian form $\qf{a}_{\vt^t}$ is (up
      to sign) equal to the usual Sylvester signature of the matrix $a$ (this is
      presented in \cite{au18}, page~343: identifying symmetric matrices in
      $M_n(D)$ with hermitian forms, the signature is --up to sign-- the map $\sign_P$ in
      equation~(2.2), and the link with Sylvester signatures in the cases
      considered in this proposition is the final bullet point on that page).
    \item\label{o2} The integer $n_P$ defined in \cite[(6.1)]{au20} is equal to $n$, again
      because of the precise form of the algebra $M_n(D)$ chosen in this proposition.
  \end{enumerate}
  Using these, we have
  \begin{enumerate}\setcounter{enumi}{2}
    \item By \cite[Proposition~4.8]{au18} (where $\s_u$ is defined just before
      \cite[Proposition~4.4]{au18}) we have $\sign^\eta_P \qf{a^{-1}}_{\vt^t} =
      \pm n_P$, which is equal to $\pm n$ by \eqref{o2} above. Since
      $\qf{a^{-1}}_{\vt^t} \cong \qf{a}_{\vt^t}$, we have $\sign^\eta_P
      \qf{a^{-1}}_{\vt^t} = \sign^\eta_P \qf{a}_{\vt^t}$, and thus $\sign^\eta_P
      \qf{a}_{\vt^t} = \pm n$. By \eqref{o1}, it follows that $a$ is PSD if this
      signature is equal to $n$, and NSD if it is equal to $-n$.\qedhere
  \end{enumerate}
\end{proof}

\begin{lemma}\label{rcf-isom}
  \begin{enumerate}
    \item Let $(A,\s)$ be an algebra with involution over $F$, and let $a \in
      A^\x$ be such that $a^{-1} = \s(b)b$ for some $b \in A$. Then $\Int(b)$ is
      an isomorphism of $F$-algebras with involution from $(A, \Int(a) \circ
      \s)$ to $(A,\s)$.
    \item Let $(D,\vt) \in \{(F,\id), (F(\sqrt{-1}),-), ((-1,-1)_F,-)\}$ where
      $F$ is a real closed field, and let $a$ be an invertible PSD matrix in
      $M_n(D)$. Then the $F$-algebras with involution $(M_n(D), \Int(a) \circ
      \vt^t)$ and $(M_n(D),\vt^t)$ are isomorphic.
  \end{enumerate}
\end{lemma}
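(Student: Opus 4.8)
The plan is to prove part~(1) by a short direct computation with the inner automorphism $\Int(b)$, and then to obtain part~(2) by feeding part~(1) the fact, recorded in Remark~\ref{results-pc}\eqref{results-pc5}, that over a real closed field the positive semidefinite matrices in $M_n(D)$ are exactly the hermitian squares.

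For part~(1) I would first extract two consequences of the hypothesis $a^{-1}=\s(b)b$. Applying $\s$ gives $\s(b)b=\s(a)^{-1}$, hence $\s(a)=a$, so $\Int(a)\circ\s$ is genuinely an involution. Multiplying $a^{-1}=\s(b)b$ on the left by $a$ gives $(a\s(b))\,b=1$; since $A$ is finite-dimensional a left inverse is two-sided, so $b\in A^\x$ with $b^{-1}=a\s(b)$, equivalently $\s(b^{-1})=\s(b)^{-1}=ba$ and $\s(b)=(ba)^{-1}$. Now $\Int(b)$ is an $F$-algebra automorphism of $A$, and for $x\in A$ one computes, using that $\s$ is an anti-automorphism,
\begin{align*}
  \s\bigl(\Int(b)(x)\bigr)&=\s(b^{-1})\,\s(x)\,\s(b)=(ba)\,\s(x)\,(ba)^{-1}\\
  &=b\bigl(a\,\s(x)\,a^{-1}\bigr)b^{-1}=\Int(b)\bigl((\Int(a)\circ\s)(x)\bigr).
\end{align*}
Thus $\s\circ\Int(b)=\Int(b)\circ(\Int(a)\circ\s)$, which is precisely the statement that $\Int(b)$ is an isomorphism of $F$-algebras with involution from $(A,\Int(a)\circ\s)$ to $(A,\s)$.

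For part~(2), note first that $a$ is symmetric, being PSD, so $\Int(a)\circ\vt^t$ is indeed an involution. By Remark~\ref{results-pc}\eqref{results-pc5} the set of positive semidefinite matrices in $M_n(D)$ equals $\HS(M_n(D),\vt^t)$, so we may write $a=\vt(c)^t c$ for some $c\in M_n(D)$; since $a$ is invertible so is $c$. Setting $b:=(\vt(c)^t)^{-1}$ one has $\vt(b)^t=c^{-1}$ (because $\vt^t$ is an involution), whence $\vt(b)^t b=c^{-1}(\vt(c)^t)^{-1}=(\vt(c)^t c)^{-1}=a^{-1}$. Applying part~(1) with $\s=\vt^t$ now yields the isomorphism $(M_n(D),\Int(a)\circ\vt^t)\cong(M_n(D),\vt^t)$, realized by $\Int(b)$.

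There is no serious obstacle: part~(1) is a bookkeeping computation and part~(2) is essentially a repackaging of the principal axis theorem. The only two points that deserve an explicit word are that the rescaling element $b$ is invertible (this is where finite-dimensionality of $A$ is used) and that one must keep track of the direction of the isomorphism — i.e.\ that $\Int(b)$ transports $\Int(a)\circ\s$ to $\s$, not the reverse — both of which are settled by the computation above.
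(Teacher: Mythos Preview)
Your proof is correct and follows essentially the same approach as the paper. The paper's proof of part~(1) simply says the identity $\Int(b)\circ\Int(a)\circ\s=\s\circ\Int(b)$ is ``a direct verification'', which you carry out in full; for part~(2) the paper writes $a^{-1}$ directly as a hermitian square via the principal axis theorem, whereas you write $a$ as one and invert---a cosmetic difference only.
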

\begin{proof}
  \begin{enumerate}
    \item We simply have to check that $\Int(b) \circ \Int(a) \circ \s = \s
      \circ \Int(b)$, but this is a direct verification.
    \item The matrix $a^{-1}$ is also PSD and, by the principal axis theorem
      (which holds in $M_n(D)$; see \cite[Corollary~6.2]{zhang97} for the
      quaternion case), there is $b \in M_n(D)$ such that $a^{-1} = \vt(b)^tb$.
      The result now follows from the previous item. \qedhere
  \end{enumerate}
\end{proof}

The next result shows how positive cones give us the control on the
involution that we would like to have in order to get quantifier elimination
(see Remark~\ref{qe-problems}\eqref{problem2}).

\begin{cor}\label{bar-tr}
  Let $(A, \s)$ be a central simple algebra with involution over $F$ real
  closed, and let $\CP$ be a positive cone on $(A, \s)$ over the unique ordering
  of $F$, such that $1 \in \CP$. Then
  \begin{enumerate}
    \item\label{bar-tr1} There are $(D,\vt) \in \{(F,\id),
      (F(\sqrt{-1}),-), ((-1,-1)_F,-)\}$ and an isomorphism $f : (A,\s)
      \rightarrow (M_n(D), \vt^t)$ of $F$-algebras with involution
      such that $f(\CP) = \PSD$.
    \item\label{bar-tr2} $\CP$ is equal to $\HS(A,\s) := \{\s(a)a \mid a \in
      A\}$, the set of hermitian squares in $(A,\s)$.
  \end{enumerate}
\end{cor}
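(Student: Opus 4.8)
The plan is to first reduce to the split situation by choosing some isomorphism of $F$-algebras $g : (A,\s) \to (M_n(D'), \s')$ for a suitable division algebra with involution $(D',\vt')$, and then use the positive cone $\CP$ together with the hypothesis $1 \in \CP$ to pin down $\s'$ precisely. By Proposition~\ref{SK-invo} (Skolem--Noether) we have $\s' = \Int(a) \circ (\vt')^t$ for some $a \in M_n(D')^\x$, and since $\s'$ is an involution, $a \in \Sym(M_n(D'),(\vt')^t)^\x$. The key observation is that since $1 \in \CP$, the involution $\s$ is positive at the unique ordering of $F$ by Proposition~\ref{pos-invo}\eqref{pos-invo1}, hence so is $\s'$ (positivity of an involution is an isomorphism-invariant statement, as it is defined via the reduced trace form, which is preserved by $F$-algebra isomorphisms by Lemma~\ref{trace-morphism}). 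Applying Proposition~\ref{pos-invo}(3) to $\s' = \Int(a)\circ(\vt')^t$, we conclude that $a$ is a PSD or NSD matrix, and after replacing $a$ by $-a$ (which does not change $\Int(a)$) we may assume $a$ is PSD, hence an invertible PSD matrix.

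Next I would invoke Lemma~\ref{rcf-isom}(2): since $a$ is an invertible PSD matrix in $M_n(D')$, there is an isomorphism of $F$-algebras with involution $h : (M_n(D'), \Int(a)\circ(\vt')^t) \to (M_n(D'),(\vt')^t)$. Composing, $f_0 := h \circ g : (A,\s) \to (M_n(D'),(\vt')^t)$ is an isomorphism of $F$-algebras with involution. Now $f_0(\CP)$ is a positive cone on $(M_n(D'),(\vt')^t)$ over the unique ordering of $F$ (positive cones are transported by isomorphisms of algebras with involution — this is immediate from Definition~\ref{def-preordering}), and it contains $f_0(1) = 1$. By Remark~\ref{results-pc}\eqref{results-pc3} the only two positive cones on $(M_n(D'),(\vt')^t)$ over that ordering are $\PSD$ and $\NSD$; since $1 \in f_0(\CP)$ and $1 \notin \NSD \setminus\{0\}$ (more precisely $1 \in \PSD$ while $-1 \notin \PSD$, so $\NSD$ does not contain $1$), we must have $f_0(\CP) = \PSD$. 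Taking $(D,\vt) := (D',\vt')$ and $f := f_0$ gives part~\eqref{bar-tr1}.

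Finally, for part~\eqref{bar-tr2}, transport the identity $\PSD(M_n(D),\vt^t) = \HS(M_n(D),\vt^t)$ from Remark~\ref{results-pc}\eqref{results-pc5} back through $f$. Concretely, $f(\HS(A,\s)) = \{f(\s(x)x) \mid x \in A\} = \{\vt^t(f(x))\,f(x) \mid x \in A\} = \HS(M_n(D),\vt^t) = \PSD = f(\CP)$, and since $f$ is a bijection this yields $\CP = \HS(A,\s)$. The main obstacle is really just the first step: correctly identifying that the hypothesis $1 \in \CP$ forces $\s$ to be a positive involution and then feeding this through Proposition~\ref{pos-invo}(3) to control the Skolem--Noether scaling element $a$; once $a$ is known to be PSD, the remaining steps are formal transport-of-structure arguments using results already established in the excerpt.
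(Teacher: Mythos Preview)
Your proposal is correct and follows essentially the same approach as the paper's proof: split $A$ as $M_n(D)$, use $1 \in \CP$ together with Proposition~\ref{pos-invo}(1) and~(3) to force the Skolem--Noether element $a$ to be PSD, then apply Lemma~\ref{rcf-isom} and identify $f(\CP)$ among the two positive cones of Remark~\ref{results-pc}\eqref{results-pc3}; part~\eqref{bar-tr2} is then the transport of Remark~\ref{results-pc}\eqref{results-pc5}. One small caution: your claim that ``since $\s'$ is an involution, $a \in \Sym(M_n(D'),(\vt')^t)^\x$'' is not automatic (a priori $a$ could be skew), but this is harmless here since the existence of a positive cone forces $\s$ and $(\vt')^t$ to have the same type (via Lemma~\ref{nil}), and the paper glosses over the same point.
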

\begin{proof}
  \begin{enumerate}
    \item Since $F$ is real closed there is an isomorphism $f : A \rightarrow
      M_n(D)$, where $(D,\vt) \in \{(F, \id), (F(\sqrt{-1}), -), ((-1,-1)_F,
      -)\}$. Under this isomorphism, the involution $\s$ becomes $\Int(a) \circ
      \vt^t$, so that $f$ is an isomorphism of algebras with involution from
      $(A,\s)$ to $(M_n(D), \Int(a) \circ \vt^t)$. 

      By Proposition~\ref{pos-invo}(1) and then (3), the involution $\s$ is positive
      at $P$, and thus we can take for $a$ a PSD matrix. We then obtain from
      Lemma~\ref{rcf-isom} an isomorphism of $F$-algebras with involution $g :
      (A,\s) \rightarrow (M_n(D), \vt^t)$.  In particular $g(\CP)$ is a positive
      cone on $(M_n(D), \vt^t)$ over $F$ and $1 \in g(\CP)$, so that $g(\CP) =
      \PSD$ (cf.  Remark~\ref{results-pc}\eqref{results-pc3}).
    \item This is clear since any element of $\PSD$ is of the form $\vt(a)^ta$
      in $M_n(D)$ by the principal axis theorem (see
      \cite[Corollary~6.2]{zhang97} for the quaternion case).\qedhere
  \end{enumerate}
\end{proof}

For the purpose of obtaining a first-order theory, we check that the fact that a
unary relation is a positive cone can be expressed by a first-order formula in
models of $\CSAI_m$:
\begin{lemma}\label{pc-def}
  Let $m \in \N$.
  \begin{enumerate}
    \item There is an $L_R \cup \{\ls, \lCP\}$-formula $\text{PC}_m$ such that for
      every $\CA \models \CSAI_{m}$ and every interpretation $\lCP^\CA$ of $\lCP$
      in $\CA$: $\CA \models \text{PC}_m$ if and only if $\lCP^\CA$ is a positive cone
      on $(A,\s)$.
    \item There is an $L_R \cup \{\ls, \lCP, \lP\}$-formula $\text{PC'}_m$ such that
      for every $\CA \models \CSAI_{m,\of}$ and every interpretation $\lCP^\CA$
      of $\lCP$ in $\CA$: $\CA \models \text{PC'}_m$ if and only if $\lCP^\CA$ is a
      positive cone on $(A,\s)$ over $\lP^\CA$.
  \end{enumerate}
\end{lemma}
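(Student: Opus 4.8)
The plan is to translate Definition~\ref{def-preordering} condition by condition. Conditions (P1), (P2), (P3), (P5) and the requirement $\CP\subseteq\Sym(A,\s)$ translate directly into $L_R\cup\{\ls,\lCP\}$-sentences; e.g.\ (P3) becomes $\forall a\,\forall x\,(\lCP(x)\to\lCP(\ls(a)\,x\,a))$ and (P5) becomes $\forall x\,((\lCP(x)\wedge\lCP(-x))\to x=0)$. For (P4), note that in every model of $\CSAI_m$ both the base field and the set $\CP_F$ of Definition~\ref{def-preordering} are definable in $L_R\cup\{\ls,\lCP\}$: the base field by $\chi(u):=\ls(u)=u\wedge\forall y\,(uy=yu)$ (it equals $Z(A)\cap\Sym(A,\s)$), and $\CP_F$ by $\phi(u):=\chi(u)\wedge\forall x\,(\lCP(x)\to\lCP(ux))$. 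Then ``$\CP_F$ is an ordering of the base field'' becomes the conjunction of $\forall u\,v\,((\phi(u)\wedge\phi(v))\to(\phi(u+v)\wedge\phi(uv)))$, $\forall u\,(\chi(u)\to(\phi(u)\vee\phi(-u)))$ and $\forall u\,((\phi(u)\wedge\phi(-u))\to u=0)$.

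The delicate point is maximality. For a prepositive cone $\CP$ and any $b\in\Sym(A,\s)$, consider the set
\[
  \CP'_b := \CP + \Bigl\{\,u_1\,\ls(a_1)\,b\,a_1+\cdots+u_m\,\ls(a_m)\,b\,a_m \;:\; u_i\in\CP_F,\ a_i\in A\,\Bigr\}.
\]
A routine verification shows that $\CP'_b$ lies in $\Sym(A,\s)$, contains $\CP\cup\{b\}$, is closed under addition, under $x\mapsto\ls(c)\,x\,c$ and under multiplication by $\CP_F$, and, if it is proper, also satisfies (P4) (its $F$-part, being an ordering of the base field containing $\CP_F$, is forced to equal $\CP_F$, since an ordering is maximal among proper preorderings of a field); one also checks that any prepositive cone containing $\CP\cup\{b\}$ contains $\CP'_b$. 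The restriction to at most $m$ summands $u_i\,\ls(a_i)\,b\,a_i$ is legitimate by Carath\'eodory's theorem applied to the cone spanned, with coefficients in $\CP_F$, by $\{\ls(a)\,b\,a:a\in A\}$ inside the base-field vector space $\Sym(A,\s)$, of dimension $\le\dim_F A=m$; the usual proof of Carath\'eodory's theorem goes through verbatim over the base field ordered by $\CP_F$. Consequently $\CP'_b$ is a prepositive cone if and only if it is proper, and $\CP$ is a positive cone if and only if it is a prepositive cone and, for every $b\in\Sym(A,\s)\setminus\CP$, the set $\CP'_b$ is not proper, i.e.
\[
  \forall b\,\bigl[(\ls(b)=b\wedge\neg\lCP(b))\to\exists x\,(x\ne0\wedge\Theta(x,b)\wedge\Theta(-x,b))\bigr],
\]
where $\Theta(y,b)$ is the formula expressing ``$y\in\CP'_b$'', which has $m$ existential quantifiers ranging over $A$ and over the definable set $\CP_F$. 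Taking $\text{PC}_m$ to be the conjunction of all the formulas above proves~(1).

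For~(2), over a model of $\CSAI_{m,\of}$ the relation $\lP$ is already an ordering of the base field, and by the observation above any prepositive cone properly containing a prepositive cone over $\lP$ is again over $\lP$; so ``$\CP$ is a positive cone over $\lP$'' amounts to ``$\CP$ is a prepositive cone with $\CP_F=\lP$ that is maximal''. Thus $\text{PC'}_m$ may be obtained from $\text{PC}_m$ by replacing the three clauses asserting ``$\CP_F$ is an ordering'' by the single clause $\forall u\,(\phi(u)\leftrightarrow\lP(u))$ and reading $\CP_F$ as $\lP$ throughout $\phi$ and $\Theta$.

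I expect the main obstacle to be the bounded description of $\CP'_b$: without a uniform bound on the number of summands the formula $\Theta$ is not first-order, and it is Carath\'eodory's theorem -- via the bound $\dim_F\Sym(A,\s)\le\dim_F A=m$ -- that supplies that bound. The rest is bookkeeping: checking that $\CP'_b$ is closed under the relevant operations and that its properness is equivalent to its being a prepositive cone extending $\CP$.
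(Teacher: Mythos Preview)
Your proof is correct and follows essentially the same approach as the paper: axioms (P1)--(P5) are directly first-order, and maximality is handled via the generated candidate $\CP'_b$ (the paper writes $\CP[b]$) together with a Carath\'eodory bound on the number of summands. The only cosmetic differences are that the paper uses $m{+}1$ summands rather than $m$ and phrases the maximality clause as ``$\CP[b]=\Sym(A,\sigma)$'' instead of ``$\CP'_b$ is not proper''.
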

\begin{proof}
  We prove the first statement (the second is obtained by taking the conjunction
  of $\text{PM}_m$ and ``$\lP = \lCP_{\lF}$'', which is first-order in $L_R \cup
  \{\lP, \lCP\}$). We introduce some notation first:

  Let $\CA = (A,\s)$ as algebra with involution over $F$. If $\CP$ is a
  prepositive cone on $(A,\s)$ (over some ordering $P := \CP_F$), and $a \in
  \Sym(A,\s)$, we define
  \[\CP[a] := \{p + \sum_{i=1}^k u_i\s(x_i)ax_i \mid p \in \CP,\ k \in \N,\ u_i
  \in \CP_F,\ x_i \in A\}.\]
  It is easily seen that $\CP[a]$ satisfies properties (P1) up to (P4) of the
  definition of prepositive cone, and will be the smallest prepositive cone over
  $P$ containing both $\CP$ and $a$ if it is proper (i.e., satisfies (P5)).

  The statement ``$\lCP$ is a prepositive cone'' is clearly first-order, so we only need to
  express that it is a maximal prepositive cone. This can be done by expressing:
  \[\forall a \in \Sym(A,\s) \setminus \CP \quad \lCP[a] = \Sym(A,\s),\]
  which itself can be expressed by a first-order formula if ``$z \in \lCP[a]$'' can.
  By definition, $\CP[a]$ is the convex cone over $\CP_F$ generated by the set
  \[\CP \cup \{\s(x)ax \mid x \in A\}.\]
  This is a convex cone in $A$ (with respect to the ordering $\CP_F$), and $\dim_F A
  = m$, so by Carath\'eodory's theorem (which holds for ordered fields, see for
  instance the proof of \cite[Chapter I,
  Theorem~2.3]{barvinok02}), for $z \in A$ we have
  \begin{gather*}
    z \in \CP[a] \\
    \Leftrightarrow \\
    \exists p \in \CP \exists u_1,x_1, \ldots, u_{m+1},x_{m+1} \
    \bigwedge_{i=1}^{m+1} u_i \in \CP_F \wedge z = p + \sum_{i=1}^{m+1}
    u_i\s(x_i)ax_i,
  \end{gather*}
  which is first-order since ``$u \in \CP_F$'' clearly is.
\end{proof}

\subsection{Model-completeness}
We define the theory of ordered central simple algebras with involution of
dimension $m$, and the same theory over a real closed field, in the language $L_\OCSAI := L_\CSAI
\cup \{\lCP\}$, to be:
\[\OCSAI_{m} := \CSAI_{m} \cup \{\lCP \text{ is a positive cone}\}, \text{ and}\]
\[\OCSAI_{m, \rcf} := \CSAI_{m,\rcf} \cup \{\lCP \text{ is a positive cone}\}\]
(with reference to Lemma~\ref{pc-def} for the axiomatization of ``$\lCP$ is a
positive cone'').

\begin{prop}\label{mc-ocsai}
  The theory $\OCSAI_{m,\rcf}$ is model-complete in $L_\CSAI \cup \{\lCP\}$, and
  the theory $\OCSAI_{m,\rcf} \cup \{\lTrd \text{ is the reduced trace}\}$ (cf.
  Lemma~\ref{axiom-Trd}) is model-complete in $L_\CSAI \cup \{\lCP, \lTrd\}$.
\end{prop}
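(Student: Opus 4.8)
The plan is to reduce model-completeness of $\OCSAI_{m,\rcf}$ to the model-completeness of $\CSAI_{m,\rcf}$ already established in Proposition~\ref{mod-comp}, using that the added symbol $\lCP$ is interpreted by a positive cone and therefore, by Corollary~\ref{bar-tr}\eqref{bar-tr2}, is \emph{definable} in models of $\OCSAI_{m,\rcf}$. Concretely, let $\CM = (A,\s,\CP)$ and $\CN = (B,\tau,\CQ)$ be models of $\OCSAI_{m,\rcf}$ with $\CM \subseteq \CN$ as $L_\OCSAI$-structures. Forgetting $\lCP$, we have $(A,\s) \subseteq (B,\tau)$ as models of $\CSAI_{m,\rcf}$, so by Proposition~\ref{mod-comp} (or rather Lemma~\ref{inclusion-invo}\eqref{inclusion-invo3}) the inclusion is elementary in $L_\CSAI \cup \{\lP,\lTrd\}$, where $\lP$ is the unique ordering on $\lF$ and $\lTrd$ is the reduced trace. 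It then remains to check that the interpretation of $\lCP$ is preserved and reflected, i.e.\ that $\CP = \CQ \cap A$, after which the elementarity in $L_\CSAI \cup \{\lP,\lTrd\}$ upgrades to elementarity in $L_\OCSAI \cup \{\lP,\lTrd\}$ (and a fortiori in $L_\OCSAI$), giving model-completeness. The second assertion is then immediate: by Lemma~\ref{axiom-Trd} ``$\lTrd$ is the reduced trace'' is first-order, so adding it to the theory and to the language changes nothing in the argument, and the inclusion stays elementary in $L_\OCSAI \cup \{\lP, \lTrd\}$.

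The key point to verify is $\CP = \CQ \cap A$. First I would note that since $\lF^\CM$ is real closed, $\CP$ is a positive cone on $(A,\s)$ over the unique ordering of $\lF^\CM$. By Remark~\ref{results-pc}\eqref{results-pc4} there are exactly two positive cones over that ordering, $\CP$ and $-\CP$, so there is a sign issue to resolve; this is precisely why one cannot expect $\CP$ itself to be literally defined by a parameter-free formula, and why the statement is about model-\emph{completeness} rather than quantifier elimination. The resolution is to split into cases according to whether $1 \in \CP$. If $1 \in \CP$, then by Corollary~\ref{bar-tr}\eqref{bar-tr2} we have $\CP = \HS(A,\s) = \{\s(a)a \mid a \in A\}$; applying the same reasoning in $\CN$, and using that $1 \in \CP \subseteq \CQ$ forces $1 \in \CQ$, we get $\CQ = \HS(B,\tau) = \{\tau(b)b \mid b \in B\}$. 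Since $A \subseteq B$ as algebras with involution, $\HS(A,\s) \subseteq \HS(B,\tau) \cap A$; conversely if $\tau(b)b \in A$ with $b \in B$, one shows $\tau(b)b \in \HS(A,\s)$ using that $\CQ\cap A$ is a prepositive cone on $(A,\s)$ containing $1$ hence contained in the unique positive cone containing $1$, which is $\CP=\HS(A,\s)$. So $\CP = \CQ \cap A$ in this case. The remaining case is $1 \notin \CP$, i.e.\ $-1 \notin \CP$ either (properness), but possibly $1 \notin \CQ$ or $-1\notin\CQ$; here one picks any $a \in \CP \setminus \{0\}$, notes $a \in \Sym(A,\s)^\x$ (nonzero symmetric elements in these algebras are not automatically invertible, so one should rather pick $a$ invertible, which is possible since $\CP$ spans $A$ — or argue more carefully), passes to $(A, \Int(a^{-1})\circ\s)$ via Remark~\ref{results-pc}\eqref{results-pc1}, where $a^{-1}\CP$ is a positive cone containing $1$, applies the previous case there, and transports back. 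Alternatively, and more cleanly, one observes that $\CQ \cap A$ is a prepositive cone on $(A,\s)$ over the unique ordering of $\lF^\CM$ containing $\CP$, hence equals $\CP$ by maximality of $\CP$ among prepositive cones over that ordering (Definition~\ref{def-preordering} together with Remark~\ref{results-pc}\eqref{results-pc4}: both $\CP$ and $\CQ\cap A$ are positive cones over the same ordering, and a positive cone is not properly contained in a prepositive cone).

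Writing this out, the cleanest route is the last one, which sidesteps the sign and the $1 \in \CP$ case-split entirely: $\CQ \cap A$ is closed under addition, satisfies $\s(x)(\CQ\cap A)x \subseteq \CQ \cap A$ for $x \in A$ (since $\s(x)\CQ x \subseteq \CQ$ and $\s(x)Ax \subseteq A$), is contained in $\Sym(A,\s)$, is nonempty, is proper (as $\CQ$ is), and $(\CQ\cap A)_F$ is an ordering on $\lF^\CM$ — indeed it equals $\CP_F$, the unique ordering, because $\CP_F \subseteq (\CQ\cap A)_F$ and $\lF^\CM$ is real closed so has only one ordering. Hence $\CQ \cap A$ is a prepositive cone on $(A,\s)$ over the unique ordering, and it contains the positive cone $\CP$; by maximality (a positive cone is maximal among prepositive cones, and a fortiori among prepositive cones over a fixed ordering) we conclude $\CP = \CQ \cap A$.

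The main obstacle, such as it is, lies in this verification that $\CQ \cap A$ is a \emph{proper} prepositive cone and that $(\CQ\cap A)_F$ is genuinely an ordering on $\lF^\CM$ rather than a priori something smaller; properness is inherited from $\CQ$ immediately, and the ordering condition follows from real-closedness of $\lF^\CM$ (which has a unique ordering, automatically the restriction of the unique ordering of $\lF^\CN$). Once $\CP = \CQ \cap A$ is in hand, the rest is bookkeeping: Lemma~\ref{inclusion-invo}\eqref{inclusion-invo3} gives elementarity of $(A,\s) \subseteq (B,\tau)$ in $L_\CSAI \cup \{\lTrd\}$, adding $\lP$ (definable as the set of squares in $\lF$) and $\lCP$ (now shown to be the restriction) keeps the inclusion elementary, so $\CM \prec \CN$ in $L_\OCSAI \cup \{\lP,\lTrd\}$ and in particular in $L_\OCSAI$, proving model-completeness of $\OCSAI_{m,\rcf}$; the statement with $\lTrd$ in the language follows verbatim since ``$\lTrd$ is the reduced trace'' is a first-order consequence already built into the argument via Lemma~\ref{axiom-Trd}.
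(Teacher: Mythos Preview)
There is a genuine gap. Your ``key point to verify'', namely $\CP = \CQ \cap A$, is \emph{given}: by assumption $\CM \subseteq \CN$ as $L_\OCSAI$-structures, and for a unary relation symbol the substructure condition is precisely $\lCP^\CM = \lCP^\CN \cap \CM$. So your clean prepositive-cone argument, while correct, proves something you already have. The real problem is your claim that ``adding $\lCP$ (now shown to be the restriction) keeps the inclusion elementary'': this is false in general. If $\CM \prec \CN$ in $L$ and $R^\CM = R^\CN \cap M$, it does \emph{not} follow that $(\CM, R^\CM) \prec (\CN, R^\CN)$ in $L \cup \{R\}$; for instance take $\CM = (\Q,<) \prec (\R,<) = \CN$, $R^\CN = (-\infty,\sqrt{2}\,]$, $R^\CM = R^\CN \cap \Q$, and note that $R^\CN$ has a greatest element while $R^\CM$ does not. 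What \emph{does} suffice is that $\lCP^\CM$ and $\lCP^\CN$ are defined in $\CM$ and $\CN$ by the \emph{same} $L_\CSAI$-formula with parameters from $\CM$: then any $L_\OCSAI$-formula rewrites as an $L_\CSAI$-formula with those parameters, and elementarity transfers.

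Your case-split argument actually contains this idea, but you aim it at the wrong target. In the case $1 \in \CP$ you observe $\CP = \HS(A,\s)$ and $\CQ = \HS(B,\tau)$ via Corollary~\ref{bar-tr}\eqref{bar-tr2}; that is exactly the statement that $\lCP$ is defined in both models by the parameter-free $L_\CSAI$-formula $\exists y\,(x = \ls(y)y)$, and \emph{this} (not $\CP = \CQ \cap A$) is what upgrades elementarity. In the general case, pick an invertible $a \in \CP$ (this exists), and the same reasoning applied to the scaled algebras yields $\CP = \{\s(y)\,a\,y : y \in A\}$ and $\CQ = \{\tau(y)\,a\,y : y \in B\}$, both defined by $\exists y\,(x = \ls(y)\cdot a\cdot y)$ with common parameter $a \in A$. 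This is essentially the paper's route: it passes through the diagram of Lemma~\ref{inclusion-invo}\eqref{inclusion-invo2}, identifies the positive cones as $\pm a\,\HS(M_n(D),\vt^t)$, matches the signs using $\CP \subseteq \CQ$, and concludes that $\lCP^\CM$ and $\lCP^\CN$ are defined by the same $L_\CSAI$-formula with parameter $f^{-1}(a)$, whence the result via Lemma~\ref{inclusion-invo}\eqref{inclusion-invo3}.
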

\begin{proof}
  Let $\CM \subseteq \CN$ be two models of $\OCSAI_{m,\rcf}$. By
  Lemma~\ref{inclusion-invo}, we have a diagram as in statement
  \eqref{inclusion-invo2} of this
  Lemma. We know that $\pm\PSD(M_n(D),\vt^t) = \pm\HS(M_n(D), \vt^t)$ are the only positive cones on $(M_n(D),
  \vt^t)$ over the unique ordering of $F$
  (see Remark~\ref{results-pc}\eqref{results-pc5} and \eqref{results-pc3}), and thus that 
  $\pm a \HS(M_n(D), \vt^t)$ are the only positive cones on $(M_n(D), \Int(a) \circ
  \vt^t)$ over the unique ordering of $F$
  (Remark~\ref{results-pc}\eqref{results-pc1}), so that $f(\lCP^\CM) = \ve
  a\HS(M_n(D), \vt^t)$ for
  some $\ve \in \{-1,1\}$. Similarly, $g(\lCP^\CN) = \delta
  a\HS(M_n(E), (\vt')^t)$ for some $\delta \in \{-1,1\}$. Since $f(\lCP^\CM)
  \subseteq g(\lCP^\CN)$, we
  have $\delta = \ve$. Therefore $\lCP^\CM = f^{-1}(a) \HS(\CM, \ls^\CM)$ and
  $\lCP^\CN = f^{-1}(a) \HS(\CN, \ls^\CN)$ are defined by
  the same $L_\CSAI$-formula (with parameter $a$) in $\CM$ and $\CN$. Both
  statements then follow from Lemma~\ref{inclusion-invo}\eqref{inclusion-invo3}.
\end{proof}

\begin{lemma}\label{ext-simple}
  Let $(A,\s)$ be a central algebra with involution over $F$, and let $\CP$ be a
  positive cone on $(A,\s)$ over $P \in X_F$. Let $(L,Q)$ be an ordered
  extension of $(F,P)$. Then $A \ox_F L$ is a central simple algebra.
\end{lemma}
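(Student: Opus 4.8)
The plan is to show that the obstruction identified in Remark~\ref{star} — namely that $Z(A) \ox_F L$ could split into a product of two fields, destroying simplicity — cannot occur when $(A,\s)$ carries a positive cone over $P$. The key point is that the presence of a positive cone forces the center $Z(A)$ to behave well with respect to the ordering $P$: either $Z(A) = F$, or $Z(A) = F(\sqrt{-d})$ for some $d \in P$, i.e. $Z(A)$ is a proper quadratic extension obtained by adjoining a square root of a \emph{negative} element. First I would recall that $A \cong M_\ell(D)$ over $Z(A)$, and that $A \ox_F L \cong A \ox_{Z(A)} (Z(A) \ox_F L)$; since $M_\ell(-)$ preserves simplicity, it suffices to show that $A \ox_{Z(A)} (Z(A) \ox_F L)$ is simple, which reduces (when $\s$ is of the first kind, $Z(A) = F$) to showing $D \ox_F L$ is a division algebra or at least simple, and (when $\s$ is of the second kind) to showing $Z(A) \ox_F L$ is a \emph{field} and $D \ox_{Z(A)} (Z(A) \ox_F L)$ is simple over it.

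The main step is therefore the second-kind case: I must rule out $Z(A) \ox_F L \cong L \times L$. Write $Z(A) = F(\alpha_0)$ with $\alpha_0^2 = d \in F$; since $\s$ acts nontrivially on $Z(A)$ we have $\s(\alpha_0) = -\alpha_0$. Now pick any $p \in \CP \setminus \{0\}$. Using property (P3) with $a = \alpha_0$, we get $\s(\alpha_0) \cdot p \cdot \alpha_0 = -\alpha_0^2 p = -d\,p \in \CP$. On the other hand, by Remark~\ref{results-pc}\eqref{results-pc0} (applied to the maximal cone, or directly using that $\CP_F = P$ is the relevant ordering), $u p \in \CP$ together with $-up \in \CP$ forces $u = 0$ by (P5); combined with $\CP_F = P$ this shows that $-d \in P$, equivalently $d \notin P \setminus\{0\}$ and in fact $d <_P 0$ (it cannot be $0$ since $\alpha_0 \notin F$). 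Hence in the real closure, and a fortiori in any ordered extension $(L,Q)$ of $(F,P)$, the element $d$ is negative, so $X^2 - d$ is irreducible over $L$ and $Z(A) \ox_F L = L[X]/(X^2-d)$ is a field. It then follows that $A \ox_F L = A \ox_{Z(A)} (Z(A)\ox_F L)$ is a central simple algebra over the field $Z(A) \ox_F L$ (extension of scalars of a central simple algebra along a field extension of its center is again central simple), hence in particular a central simple algebra, which is what we want.

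For the first-kind case ($Z(A) = F$), the argument is easier: $A \ox_F L$ is the extension of scalars of the central simple $F$-algebra $A$ along the field extension $F \subseteq L$, hence is central simple over $L$ directly, and the positive cone is not even needed here. The main obstacle is the computation in the second-kind case showing $-d \in P$; once that congruence-type identity $\s(\alpha_0) p \alpha_0 = -dp$ is in hand, the rest is the standard fact that base change of a central simple algebra along a field extension of its center preserves central simplicity. I would also remark that this lemma is exactly what makes the construction $(A,\s,\CP) \subseteq (A \ox_F L, \s \ox \id, \CP')$ available as a model-companion-style embedding, in contrast to the failure recorded in the preceding remarks when no positive cone is present.
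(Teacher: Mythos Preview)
Your proof is correct and reaches the same conclusion as the paper, but by a more direct route in the second-kind case. The paper argues as follows: since there is a positive cone over $P$, one has $P \in X_F \setminus \Nil[A,\s]$ (by \cite[Proposition~6.6]{au20}), and then \cite[Proposition~8.4]{au20} gives $d \notin P$. You instead bypass the $\Nil$ machinery entirely and extract $-d \in P$ straight from the axioms: with $\alpha_0 \in Z(A)$ central and $\s(\alpha_0) = -\alpha_0$, axiom~(P3) gives $\s(\alpha_0)\,p\,\alpha_0 = -d\,p \in \CP$ for any $p \in \CP \setminus \{0\}$, and then Remark~\ref{results-pc}\eqref{results-pc0} yields $-d \in \CP_F = P$. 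This is a genuinely more elementary argument, self-contained within the paper and not relying on the cited results about $\Nil[A,\s]$; what the paper's version buys is the explicit link to the characterization of nil orderings, which is conceptually informative but not logically necessary here. The first-kind case and the concluding scalar-extension step are handled identically in both proofs.
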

\begin{proof}
  If $F = Z(A)$ the result is clear (see for instance \cite[Theorem~1.1(3) and
  (4)]{BOI}), so we can assume that $F \not = Z(A)$, i.e., that $\s$ is of the
  second kind. In particular $Z(A) = F(\sqrt{d})$ for some $d \in F$. By
  hypothesis $P \in X_F \setminus \Nil[A,\s]$ (see \cite[Proposition~6.6]{au20},
  where $\widetilde{X_F} := X_F \setminus \Nil[A,\s]$), so that $d \not \in P$
  by \cite[Proposition~8.4]{au20}. Therefore $\sqrt{d} \not \in L$ (since
  $\sqrt{d}$ is not in the real closure of $(F,P)$) and \[A \ox_F L = A
  \ox_{Z(A)} Z(A) \ox_F L = A \ox_{Z(A)} L[X]/(X^2-d) = A \ox_{Z(A)}
  L(\sqrt{d}),\] which is central simple (see again \cite[Chapter~8,
  Corollary~5.1]{scharlau85}).
\end{proof}

\begin{cor}
  The theory $\OCSAI_{m,\rcf}$ is the model-companion of  $\OCSAI_{m}$, and the
  theory $\OCSAI_{m,\rcf} \cup \{\lTrd \text{ is the reduced trace}\}$ is the
  model-companion of $\OCSAI_{m} \cup \{\lTrd \text{ is the reduced trace}\}$.
\end{cor}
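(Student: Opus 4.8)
The plan is to use the standard characterization: $\OCSAI_{m,\rcf}$ (resp.\ its expansion by ``$\lTrd$ is the reduced trace'') is the model-companion of $\OCSAI_m$ (resp.\ its analogous expansion) as soon as (i) the ``$\rcf$'' theory is model-complete, and (ii) the two theories are cotheories. Item~(i) is exactly Proposition~\ref{mc-ocsai}, for both the language $L_\OCSAI = L_\CSAI \cup \{\lCP\}$ and its expansion by $\lTrd$, so the remaining work is item~(ii).

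One direction of (ii) is immediate, since $\OCSAI_{m,\rcf} \supseteq \OCSAI_m$ (and likewise for the expansions): every model of the ``$\rcf$'' theory is already a model of the other, hence embeds into one via the identity. For the converse, let $(A,\s,\CP) \models \OCSAI_m$ with base field $F$, and set $P := \CP_F$, an ordering on $F$. Let $L$ be a real closure of $(F,P)$, with unique ordering $Q$. By Lemma~\ref{ext-simple}, $A \ox_F L$ is central simple; a direct check (distinguishing the two kinds of $\s$, as in Lemma~\ref{kind-rcf}) shows that its base field is $L$ --- when $\s$ is of the second kind one uses that $Z(A) = F(\sqrt{d})$ with $d \notin P$, so $Z(A \ox_F L) = L(\sqrt{d})$ is a quadratic extension of $L$ --- and that $\dim_L(A \ox_F L) = \dim_F A = m$. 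Since $L$ is real closed, $(A\ox_F L, \s\ox\id) \models \CSAI_{m,\rcf}$, and the canonical map $(A,\s) \hookrightarrow (A\ox_F L, \s\ox\id)$ is an $L_\CSAI$-embedding. For the expanded language one additionally uses that the reduced trace of $A \ox_F L$ restricts to that of $A$ on $A$ (compatibility of the reduced trace with scalar extension, exactly as in the proof of Lemma~\ref{trace-morphism}).

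It then remains to equip $A \ox_F L$ with a positive cone containing $\CP$. Since a positive cone over $P$ exists on $(A,\s)$, we have $P \in X_F \setminus \Nil[A,\s]$ (Remark~\ref{results-pc}\eqref{results-pc2}); using that signatures of hermitian forms are invariant under extension of the ordered base field, $Q$ lies outside $\Nil[A\ox_F L, \s\ox\id]$, so positive cones over $Q$ exist (again Remark~\ref{results-pc}\eqref{results-pc2}), and in fact the prepositive cone generated by $\CP$ inside $A\ox_F L$ is proper and extends (Zorn) to a positive cone $\CP'$ on $(A\ox_F L, \s\ox\id)$ over $Q$. Granting this, the canonical map $(A,\s,\CP) \hookrightarrow (A\ox_F L, \s\ox\id, \CP')$ is an $L_\OCSAI$-embedding (resp.\ an $L_\OCSAI \cup \{\lTrd\}$-embedding), so $(A\ox_F L, \s\ox\id, \CP')$ is the required model of $\OCSAI_{m,\rcf}$ (resp.\ of its expansion). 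This gives (ii) and hence the corollary.

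I expect the main obstacle to be exactly the last point: verifying that $\CP$ genuinely extends to a positive cone over $Q$ --- not merely that some positive cone over $Q$ exists. This is where one must invoke the base-change theory for positive cones of \cite{au20} (Lemma~\ref{ext-simple} being set up precisely to keep $A \ox_F L$ central simple, cf.\ Remark~\ref{star}), and where care is needed to track the sign ambiguity of positive cones (Remark~\ref{results-pc}\eqref{results-pc4}) so that one lands in the cone containing $\CP$ rather than in its negative.
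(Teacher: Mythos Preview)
Your proposal is correct and follows essentially the same approach as the paper: model-completeness from Proposition~\ref{mc-ocsai}, plus the cotheory property via scalar extension to a real closure, using Lemma~\ref{ext-simple} for simplicity of $A \ox_F L$ and compatibility of the reduced trace with scalar extension. The one place where you hedge --- extending $\CP$ to a positive cone on $A \ox_F L$ --- is exactly what the paper handles by a direct citation of \cite[Proposition~5.8]{au20}, rather than via your preliminary $\Nil$/Zorn sketch; you correctly identify this as the point requiring that input.
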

\begin{proof}
  We only prove the second statement, since the first one is similar.

  By Proposition~\ref{mc-ocsai} it suffices to show that $\OCSAI_{m} \cup
  \{\lTrd \text{ is the reduced trace}\}$ and $\OCSAI_{m,\rcf} \cup \{\lTrd
  \text{ is the reduced trace}\}$ are cotheories. Let $\CM = (A,\s)$ be a model
  of $\OCSAI_m$, let $F := \lF^\CM$, let $P \in X_F$ be such that $\lCP^\CM$ is
  over $P$, and let $F_P$ be a real closure of $F$ at $P$. By
  Lemma~\ref{ext-simple}, the algebra $(A \ox_F L, \s \ox \id)$ is a central
  simple algebra with involution, and by \cite[Proposition~5.8]{au20}, it is
  equipped with a positive cone containing $\CP \ox 1$. It is thus a model of
  $\OCSAI_{m,\rcf}$ and the inclusion $A \rightarrow A \ox_F L$, $a \mapsto a
  \ox 1$ is a morphism in the language $L_\CSAI \cup \{\lCP, \lTrd\}$, since
  this map preserves the reduced trace by definition of the reduced trace.
\end{proof}

Recall from Remark~\ref{qe-problems}\eqref{problem1} that $\OCSAI_{m,rcf}$ does
not have the amalgamation property over substructures (take the PSD matrices for
positive cone on $(M_n(\R), t)$ and on $(M_n((-1,-1)_\R, -^t)$ to turn them into
models of $\OCSAI_{m,\rcf}$), and in particular does not have quantifier
elimination (see \cite[Proposition~3.5.19]{chang-keisler}).

\subsection{Quantifier elimination with an involution of specified type}
\label{with-type}

\begin{defi}\label{theories}
  We introduce a new constant symbol $\la$ and define the following theories in
  the language $L_\CSAI \cup \{\lCP, \lTrd, \la\}$, where we specify the
  type of the involution:
  \[\OCSAI_{m,\rcf}^+ := \OCSAI_{m,\rcf} \cup \{\lTrd \text{ is the reduced trace}\} \cup 
       \{\la \in \lCP,\ \la \text{ is invertible}\}\]
  and
  \begin{align*}
    \OCSAOI_{m,\rcf}^+ &:= \OCSAI_{m,\rcf}^+ \ \cup \{\ls \text{ is an
    orthogonal involution}\}\\[1ex]
    \OCSASI_{m,\rcf}^+ &:= \OCSAI_{m,\rcf}^+ \ \cup \{\ls \text{ is a
    symplectic involution}\} \\[1ex]
    \OCSAUI_{m,\rcf}^+ &:= \OCSAI_{m,\rcf}^+ \ \cup \{\ls \text{ is a unitary involution}\}
  \end{align*}
  (with reference to Lemma~\ref{axiom-Trd} for the
  axiomatization of the reduced trace).
  Note that every positive cone contains an invertible element by
  \cite[Lemma~3.6]{au20}.
\end{defi}

\begin{rem}
  We can replace $\la$ by $1$ in the theories above, so that the models will have
  positive cones that contain $1$ (and will correspond to situations where the
  involution is positive, see Proposition~\ref{pos-invo}\eqref{pos-invo1}. In this
  case there is no need to add the new constant $\la$ to the language.
\end{rem}

\begin{prop}\label{KT-eq}
  The theories $\OCSAOI_{m,\rcf}^+$, $\OCSASI_{m,\rcf}^+$ and
  $\OCSAUI_{m,\rcf}^+$ each have quantifier elimination in the language $L_\CSAI \cup
  \{\lTrd, \lCP\}$.
\end{prop}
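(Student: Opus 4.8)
The plan is to reduce each of the three theories to the matrix case handled by Proposition~\ref{qe-mat} and then apply the standard model-theoretic criterion for quantifier elimination (model-completeness plus amalgamation over substructures, as in \cite[Theorem~3.1.4]{marker02}). First I would observe that each theory $\OCSAOI_{m,\rcf}^+$, $\OCSASI_{m,\rcf}^+$, $\OCSAUI_{m,\rcf}^+$ is model-complete: by Proposition~\ref{mc-ocsai} the theory $\OCSAI_{m,\rcf} \cup \{\lTrd \text{ is the reduced trace}\}$ is model-complete in $L_\CSAI \cup \{\lCP,\lTrd\}$, and adding the constant $\la$ together with the axioms $\la \in \lCP$, $\la$ invertible, and a (first-order) specification of the type of $\ls$ only restricts the class of models without affecting the fact that an inclusion of models is elementary; the constant $\la$ is interpreted the same way in a submodel and its extension by the definition of substructure, so model-completeness is preserved.

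Next I would set up the amalgamation. Let $\CM, \CN$ be models of (say) $\OCSAOI_{m,\rcf}^+$ with a common finitely generated $L_\CSAI \cup \{\lCP,\lTrd,\la\}$-substructure $\CA$. Using $1 \in \lCP$—or more precisely the invertible element $\la^\CM \in \lCP^\CM$—together with Corollary~\ref{bar-tr}\eqref{bar-tr1} applied after rescaling the involution by $\Int(\la)$ (cf.\ Remark~\ref{results-pc}\eqref{results-pc1} and Lemma~\ref{rcf-isom}), I would produce isomorphisms of $L_\CSAI \cup \{\lCP,\lTrd\}$-structures $\phi : \CM \to (M_n(D_\CM), \vt_\CM^t, \PSD, \Trd)$ and $\psi : \CN \to (M_n(D_\CN), \vt_\CN^t, \PSD, \Trd)$, where the type of $(D_\CM,\vt_\CM)$ and $(D_\CN,\vt_\CN)$ is forced to be the same (real type, in the orthogonal case) by the type axiom on $\ls$ and Lemma~\ref{kind-rcf}. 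Concretely: the image $f(\lCP^\CM)$ is $\ve\, a\HS(M_n(D),\vt^t)$ for the matrix $a = f(\la^\CM)$, which is then PSD (up to the sign $\ve$, absorbed into $\la$), so Lemma~\ref{rcf-isom} straightens $(M_n(D),\Int(a)\circ\vt^t)$ back to $(M_n(D),\vt^t)$ carrying $\lCP$ to $\PSD$. One must track that $\la$ goes to an invertible PSD matrix under these identifications; I would fold $\la$ into the data so that after the straightening $\phi(\la^\CM)$ and $\psi(\la^\CN)$ are invertible PSD matrices over $D_\CM$, $D_\CN$ respectively. The sub-division-algebra-with-involution $(D_\CM,\vt_\CM)$ sits over the real closed field $\lF^\CM$, and similarly for $\CN$.

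Then I would run essentially the same amalgamation argument as in Proposition~\ref{qe-mat}. Form the commutative subring $R$ of $\CA$ generated by the image of $\lTrd^\CA$ together with $\la^\CA$; its $\phi$- and $\psi$-images lie in the respective centres, so $R$ is commutative, and by quantifier elimination for real closed fields (\cite[Proposition~3.5.19]{chang-keisler}) there is a real closed field $\Omega$ amalgamating $\lF^\CM$ and $\lF^\CN$ over (the ordered field generated by) $R$. Let $(D_\Omega,\vt_\Omega)$ be the division algebra with involution over $\Omega$ of the common type, giving $L$-embeddings $\ve_n : (M_n(D_\CM),\vt_\CM^t) \to (M_n(D_\Omega),\vt_\Omega^t)$ and $\delta_n : (M_n(D_\CN),\vt_\CN^t) \to (M_n(D_\Omega),\vt_\Omega^t)$ that preserve $\lTrd$; note they also send $\PSD$ into $\PSD$ and send $\phi(\la^\CM)$, $\psi(\la^\CN)$ to the common value determined by $R$ (an invertible PSD matrix over $\Omega$, the same on both sides because $\la^\CA$ is). Writing $X_1,\dots,X_d$ for generators of $\CA$ and $Y_i := \ve_n(\phi(X_i))$, $Z_i := \delta_n(\psi(X_i))$, the identity $\Trd(w(Y,\vt_\Omega(Y)^t)) = \Trd(w(Z,\vt_\Omega(Z)^t))$ for all words $w$ follows exactly as in Proposition~\ref{qe-mat} from the fact that $\phi,\psi,\ve_n,\delta_n$ are $L$-embeddings and $\ve\circ\phi\uhr R = \delta\circ\psi\uhr R$. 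By Theorem~\ref{Specht-property} there is $O \in M_n(D_\Omega)$ with $\vt_\Omega(O)^tO = I_n$ and $\vt_\Omega(O)^tY_iO = Z_i$ for all $i$; conjugation $\xi = \Int(\vt_\Omega(O)^t)$, wait—more precisely $\xi : X \mapsto \vt_\Omega(O)^tXO$—is an $L$-automorphism of $M_n(D_\Omega)$ that additionally preserves $\PSD$ (since $\vt_\Omega(O)^t\HS\,O = \HS$) and fixes the image of $\la^\CA$ (because $\vt_\Omega(O)^t\cdot \ve_n(\phi(\la^\CM))\cdot O = \ve_n(\phi(\la^\CA\cdot\text{unit}))$—here I need that $\la^\CA$ is in $\CA$ and transported consistently, which is why it was put into $R$). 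Hence $\xi\circ\ve_n\circ\phi$ and $\delta_n\circ\psi$ agree on $\CA$ in the full language $L_\CSAI \cup \{\lCP,\lTrd,\la\}$, giving the required amalgam.

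The main obstacle I expect is the bookkeeping around the constant $\la$ and the positive cone under the straightening isomorphisms: one has to check that after rescaling by $\Int(\la)$ and applying Lemma~\ref{rcf-isom}, the three pieces of structure—the involution type, the positive cone becoming $\PSD$, and the distinguished invertible element $\la$—can be made simultaneously compatible on the $\CM$-side and the $\CN$-side, and then that the amalgamating automorphism $\xi$ of $M_n(D_\Omega)$ respects all of them at once (being a congruence by a unitary $O$, it automatically preserves $\HS$ and hence $\PSD$, and it preserves the image of $\la^\CA$ precisely because that image lies in $R$, over which $\ve_n\phi$ and $\delta_n\psi$ already agree). The type-compatibility between $(D_\CM,\vt_\CM)$ and $(D_\CN,\vt_\CN)$ is not a real obstacle: it is exactly what the type axiom on $\ls$ buys us, via Lemma~\ref{kind-rcf}, and is the whole point of passing to the three separate theories rather than $\OCSAI_{m,\rcf}^+$.
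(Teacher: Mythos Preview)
Your overall strategy is the paper's: model-completeness from Proposition~\ref{mc-ocsai}, rescale the involution by $\la$ so that $1$ lies in the positive cone, straighten to $(M_n(D),\vt^t,\PSD)$ via Corollary~\ref{bar-tr}, amalgamate via Theorem~\ref{Specht-property}/Proposition~\ref{qe-mat}, and return. Two concrete steps in your execution fail, however.

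First, you cannot put $\la^\CA$ into $R$: the element $\la$ is an arbitrary invertible member of $\lCP$, and after straightening it becomes a non-scalar invertible PSD matrix, so $\phi(\la^\CA)$ does \emph{not} lie in the centre and the ring generated by $\Trd^\CA(\CA)\cup\{\la^\CA\}$ is not commutative. Fortunately you do not need this: $\la^\CA\in\CA$, hence it is among the generators $X_1,\dots,X_d$, and the unitary $O$ from Theorem~\ref{Specht-property} already carries $\ve_n(\phi(\la))$ to $\delta_n(\psi(\la))$.

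Second, and more importantly, the maps $\phi:\CM\to(M_n(D_\CM),\vt_\CM^t,\PSD)$ and $\psi:\CN\to(M_n(D_\CN),\vt_\CN^t,\PSD)$ that you write down are \emph{not} $L_\CSAI\cup\{\lCP\}$-isomorphisms from the original structures. Corollary~\ref{bar-tr} requires $1\in\lCP$, so $\phi$ is an isomorphism from the \emph{rescaled} structure $\CM'=(\CM,\Int(\la)\circ\ls^\CM,\la\,\lCP^\CM)$, not from $\CM$; indeed if $\ls^\CM$ is not positive at $P$ then $1\notin\lCP^\CM$ and no isomorphism $\CM\to(M_n(D),\vt^t,\PSD)$ respecting $\lCP$ can exist at all, since it would force $1=\phi^{-1}(I_n)\in\lCP^\CM$. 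The paper handles this by making the rescaling explicit (passing from $\CA,\CM,\CN$ to $\CA',\CM',\CN'$), amalgamating the primed structures into $(M_n(K),{}^t,\PSD)$, and then \emph{scaling back}: it equips $M_n(K)$ with involution $\Int(\lambda\phi(\la^{-1}))\circ{}^t$ and positive cone $\lambda\phi(\la^{-1})\,\PSD$, and checks by hand that $\phi,\psi,\lambda,\mu$ remain morphisms for the \emph{original} involutions and positive cones. That verification is exactly the bookkeeping you flag in your final paragraph, and it is the bulk of the work; your proposal does not carry it out.
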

\begin{proof}
  We only prove it for $\OCSAOI_{m,\rcf}^+$, since the others are similar.

  By Proposition~\ref{mc-ocsai}, the theory $\OCSAOI_{m,\rcf}^+$ is
  model-complete in the language $L_\CSAI \cup \{\lCP, \lTrd\}$,
  so we only need to show that it has the amalgamation property over
  substructures.
  Let $\CM, \CN \models \OCSAOI_{m,\rcf}^+$ and let $\CA$ be a common
  $L_\CSAI \cup \{\lTrd, \lCP\}$-substructure of $\CM$ and $\CN$. Let
  $\iota_\CM$ and $\iota_\CN$ be the inclusions of $\CA$ in $\CM$ and $\CN$,
  respectively.

  In order to simplify the notation, we write $a$ for the element $a^\CM = a^\CA
  = a^\CN$, $F$ for the field $\lF^\CM$, and $L$ for the field $\lF^\CN$.

  We turn $\CM$ and $\CN$ into $L_\CSAI \cup \{\lP, \lTrd, \lCP\}$-structures by
  interpreting $\lP$ in $\CM$ and $\CN$ by the set defined by the
  quantifier-free $L_\CSAI \cup \{\lCP\}$-formula (cf.
  Remark~\ref{results-pc}\eqref{results-pc0}):
  \[x \in \lF \wedge x\la \in \lCP.\]
  Since this formula is quantifier-free, $\CA$ is an $L_\CSAI \cup \{\lP, \lTrd,
  \lCP\}$-substructure of $\CM$ and $\CN$, i.e., the maps $\iota_\CM$ and
  $\iota_\CN$ are $L_\CSAI \cup \{\lP, \lTrd, \lCP\}$-morphisms.

  Scaling the involutions by $a$, we obtain the following diagram (we only
  indicate the base set, the involution, and the positive cone in each
  entry):
  \[\xymatrix{
     \makebox[.2\textwidth][l]{\hspace{-0em}$\CM' := (\CM, \Int(a) \circ
     \ls^\CM, a \lCP^\CM)$} & &
     \makebox[.2\textwidth][l]{\hspace{-4em}$\CN' := (\CN, \Int(a) \circ
     \ls^\CN, a \lCP^\CN)$} \\
    & \makebox[.2\textwidth][l]{\hspace{-4em}$\CA' := (\CA, \Int(a) \circ
    \ls^\CA, a \lCP^\CA)$} \ar[ul]^{\iota_\CM} \ar[ur]_{\iota_\CN} & \\
  }\]
  It is clear that $\iota_\CM$ and $\iota_\CN$ are still $L_\CSAI \cup \{\lP, \lTrd,
  \lCP\}$-morphisms. Moreover, $a\lCP^\CM$ is a positive cone on
  $(\CM, \Int(a) \circ \ls^\CM)$ by \cite[Proposition~4.4]{au20}, and contains
  $1$ (indeed $a^2 \in a\lCP^\CM$ so that, using that $\ls^\CM(a) = a$ and
  property (P3) of positive cones,
  $\ls^\CM(a^{-1}) a^2 a^{-1} = 1 \in a\lCP^\CM$). Similarly, $1 \in a
  \lCP^\CN$.

  Observe that the involution $\Int(a) \circ \s$ is of the same type as $\s$
  since $a$ is symmetric (it follows from $a \in \CP$), by
  \cite[Proposition~2.7(3)]{BOI}. Therefore, by Corollary~\ref{bar-tr},
  we have two $L_\CSAI \cup \{\lP, \lTrd, \lCP\}$-isomorphisms (whose images are
  $M_n(F)$ and $M_n(E)$ since the involution is orthogonal, see the end of Section~\ref{awi}): 
  \[\phi : \CM' \rightarrow (M_n(F), {}^t, \PSD) \text{ and } \psi : \CN'
  \rightarrow (M_n(L), {}^t, \PSD)\]
  where $\lTrd$ is interpreted by
  the reduced trace in $M_n(F)$ and $M_n(L)$, and the maps $\phi$ and $\psi$
  respect $\lTrd$ by Lemma~\ref{trace-morphism}.

  The two $L_\CSAI \cup \{\lP,
  \lTrd\}$-structures $(M_n(F), {}^t, F, \Trd)$ and $(M_n(L), {}^t, L, \Trd)$
  are models of the theory of $(M_n(F), -^t, F, \Trd)$ ($F \equiv L$ and both
  structures are interpretable in the same way in $F$ and $L$) and $\phi \circ
  \iota_\CM$ as well as $\psi \circ \iota_\CN$ are $L_{\CSAI} \cup \{\lP,
  \lTrd\}$-morphisms.

  By Proposition~\ref{qe-mat}, we can amalgamate this diagram using two
  $L_{\CSAI} \cup \{\lP, \lTrd\}$-morphisms $\lambda : M_n(F) \rightarrow
  M_n(K)$ and $\mu : M_n(L) \rightarrow M_n(K)$. Since $\PSD$ is the set of
  hermitian squares in all three structures
  (Corollary~\ref{bar-tr}\eqref{bar-tr2}), $\lambda$ and $\mu$ respect $\PSD$,
  so that the following diagram consists of $L_\CSAI \cup \{\lP, \lTrd,
  \lCP\}$-morphisms:
  \[\xymatrix{
    & (M_n(K), {}^t, \PSD) & \\
    (M_n(F),{}^t, \PSD) \ar[ur]^\lambda & & (M_n(L), {}^t, \PSD) \ar[ul]_\mu \\
    \CM \ar[u]^\phi & & \CN \ar[u]_\psi \\
    & \CA \ar[ul]^{\iota_\CM} \ar[ur]_{\iota_\CN} & \\
  }\]

  We now ``scale back'' and check that the morphisms (which are the same
  at the level of the elements) are still morphisms in the language $L_\CSAI
  \cup \{\lP, \lTrd, \lCP\}$:
  \[\xymatrix{
    & \makebox[.2\textwidth][l]{\hspace{-5em}$(M_n(K), \Int(\lambda \circ \phi(a^{-1})) \circ {}^t,
      \lambda \circ \phi(a^{-1}) \PSD)$} & \\
    \makebox[.2\textwidth][l]{\hspace{-4.5em}$(M_n(F), \Int(\phi(a^{-1})) \circ {}^t,
    \phi(a^{-1}) \PSD)$} \ar[ur]^\lambda & &
    \makebox[.2\textwidth][l]{\hspace{-5.5em}$(M_n(L), \Int(\psi(a^{-1})) \circ {}^t,
      \psi(a^{-1}) \PSD)$} \ar[ul]_\mu \\
    \CM \ar[u]^\phi & & \CN \ar[u]_\psi \\
    & \CA \ar[ul]^{\iota_\CM} \ar[ur]_{\iota_\CN} & \\
  }\]
  The morphisms fix the elements of $F$ pointwise, so are clearly morphisms for
  $\{\lF, \lP\}$. We check for the involutions and the positive cones:
  \begin{itemize}
    \item The map $\phi$. For the positive cones, we want $\phi(\lCP^\CM)
      \subseteq \phi(a^{-1}) \PSD$, which is clear since $\phi(a\lCP^\CM)
      \subseteq \PSD$ by construction of $\phi$. For the involutions, we want
      $\phi \circ \s(x) = \Int(\phi(a^{-1})) \circ {}^t \circ \phi(x)$ for every
      $x \in \CM$. We successively have, for every $x' \in \CM$:
      \begin{align*}
        \phi \circ \Int(a) &\circ \ls^\CM(x') = {}^t \circ \phi(x')  \quad \text{(by
        definition of $\phi$)} \\
        &\Leftrightarrow \phi(a\ls^\CM(x')a^{-1}) = \phi(x')^t  \\
        &\Leftrightarrow  \phi(\ls^\CM(a^{-1}x'a)) = \phi(x')^t  \quad
        \text{(using that $\ls^\CM(a) = a$)} \\
        &\Leftrightarrow  \phi \circ \ls^\CM(x) = \phi(axa^{-1})^t  \quad \text{(where
         $x:=a^{-1}x'a$)} \\
        &\Leftrightarrow  \phi \circ \ls^\CM(x) = \phi(a^{-1})^t \phi(x)^t \phi(a)^t  \\
        &\Leftrightarrow  \phi \circ \ls^\CM(x) = \Int(\phi(a^{-1})^t) \circ {}^t \circ \phi(x)
      \end{align*}
      and the result follows since $\phi(a)^t = \phi(a)$ (indeed, using that
      $\phi \circ \Int(a) \circ \ls^\CM = {}^t \circ \phi$ by definition of $\phi$,
      we obtain $\phi(a) = \phi(a\ls^\CM(a)a^{-1}) = \phi(a)^t$).
    \item The map $\psi$: It is the exact same argument as for $\phi$.
    \item The map $\lambda$. It is clear for the positive cones. For the
      involutions, we want $\lambda \circ \Int(\phi(a^{-1})) \circ {}^t=
      \Int(\lambda \circ \phi(a^{-1})) \circ {}^t \circ \lambda$. Let $x \in
      M_n(F)$. Then, using that ${}^t \circ \lambda = \lambda \circ {}^t$ (by
      definition of $\lambda$), we
      get
      \begin{align*}
        \Int(\lambda \circ \phi(a^{-1})) \circ {}^t \circ \lambda(x) &=
          \lambda \circ \phi(a^{-1}) \lambda(x)^t \lambda \circ \phi(a) \\
          &= \lambda(\phi(a^{-1}) x^t \phi(a)) \\
          &= \lambda \circ \Int(\phi(a^{-1})) \circ {}^t(x).
      \end{align*}
    \item The map $\mu$. Recall that by choice of $\lambda$ and $\mu$ we have
      $\mu \circ \psi(a) = \lambda \circ \phi(a)$ and thus $\mu \circ
      \psi(a^{-1}) = \lambda \circ \phi(a^{-1})$. For the positive cones, and
      since $\mu(\PSD) \subseteq \PSD$, we have $\mu(\psi(a^{-1}) \PSD) = \mu
      \circ \psi(a^{-1}) \mu(\PSD) \subseteq \lambda \circ \phi(a^{-1}) \PSD$.
      For the involutions, we want $\mu \circ \Int(\psi(a^{-1})) \circ {}^t =
      \Int(\lambda \circ \phi(a^{-1})) \circ {}^t \circ \mu$, i.e., $\mu \circ
      \Int(\psi(a^{-1})) \circ {}^t = \Int(\mu \circ \psi(a^{-1})) \circ {}^t
      \circ \mu$. Using that ${}^t \circ \mu = \mu \circ {}^t$ (by definition of
      $\mu$), we get
      \begin{align*}
        \Int(\mu \circ \psi(a^{-1})) \circ {}^t \circ \mu(x) &=
          \mu \circ \psi(a^{-1}) \mu(x)^t \mu \circ \psi(a) \\
          &= \mu(\psi(a^{-1}) x^t \psi(a)) \\
          &= \mu \circ \Int(\psi(a^{-1})) \circ {}^t(x). \qedhere
      \end{align*}
  \end{itemize}
\end{proof}

\begin{cor}
  Each of the final three theories from Definition~\ref{theories} is the
  model-companion of the same theory where we do not specify that $\lF$ is real
  closed.
\end{cor}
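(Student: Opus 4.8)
The plan is to invoke the standard characterization: a theory $T^*$ is the model-companion of a theory $T$ (in the same language) precisely when $T^*$ is model-complete and $T,T^*$ are cotheories, i.e.\ every model of each embeds into a model of the other. Write $\OCSAOI_m^+$, $\OCSASI_m^+$, $\OCSAUI_m^+$ for the theories obtained from those of Definition~\ref{theories} by deleting the axiom ``$\lF$ is real closed''. Model-completeness of each of $\OCSAOI_{m,\rcf}^+$, $\OCSASI_{m,\rcf}^+$, $\OCSAUI_{m,\rcf}^+$ is already available: it is immediate from Proposition~\ref{KT-eq}, since quantifier elimination implies model-completeness (one could also cite Proposition~\ref{mc-ocsai}). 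Hence the entire content of the corollary is the cotheory condition; and since $\OCSAOI_{m,\rcf}^+ \supseteq \OCSAOI_m^+$ (and likewise for the other two), one half of that condition is trivial: every model of the ``$\rcf$''-theory is a model of the one without it.

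For the remaining half I would argue exactly as in the model-companion statement preceding Definition~\ref{theories}. Given a model $(A,\s)$ of, say, $\OCSAOI_m^+$, let $F:=\lF^{(A,\s)}$, let $\CP:=\lCP^{(A,\s)}$, and let $P:=\CP_F$ be the ordering of $F$ attached to $\CP$ by property (P4); then $F$ is formally real, so $(F,P)$ has a real closure $L$. By Lemma~\ref{ext-simple}, $A\ox_F L$ is a central simple algebra, hence $(A\ox_F L,\s\ox\id)$ is a central simple algebra with involution over the real closed field $L$; by \cite[Proposition~5.8]{au20} it carries a positive cone $\CP'$ with $\CP\ox 1\subseteq\CP'$. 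The inclusion $\iota:x\mapsto x\ox 1$ is an $F$-algebra embedding, it preserves the reduced trace by the definition of the reduced trace, it sends the distinguished invertible element $\la$ of $\CP$ to the invertible element $\la\ox 1$ of $\CP'$, and it maps $\CP$ into $\CP'$ and $F$ into $L$. So, interpreting $\lCP$, $\lTrd$, $\la$ in $(A\ox_F L,\s\ox\id)$ by $\CP'$, the reduced trace, and $\la\ox 1$, I get an $L_\CSAI\cup\{\lCP,\lTrd,\la\}$-substructure inclusion of $(A,\s)$ into a structure that I then need to confirm satisfies $\OCSAOI_{m,\rcf}^+$.

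The one point in that confirmation going beyond the earlier cotheory arguments --- and the step I expect to be the main, if minor, obstacle --- is that scalar extension to $L$ must preserve the \emph{type} of the involution, which is now part of the axioms. For involutions of the first kind this is immediate because $\deg(A\ox_F L)=\deg A$ and $\dim_L\Sym(A\ox_F L,\s\ox\id)=\dim_F\Sym(A,\s)$, so the orthogonal/symplectic dichotomy of \cite[Proposition~2.6]{BOI} is unaffected; for the second (unitary) kind one needs $Z(A)\ox_F L$ to remain a field, which is exactly the fact used in the proof of Lemma~\ref{ext-simple}: there $Z(A)=F(\sqrt d)$ with $d\notin P$, hence $\sqrt d\notin L$. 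This is also precisely where the presence of the positive cone is indispensable --- without it the centre could be a proper ordered extension of $\lF$ and $A\ox_F L$ would fail to be simple, cf.\ the remark following Proposition~\ref{mod-comp} and Remark~\ref{star}. Everything else --- that $\iota$ is an $L_\CSAI$-substructure inclusion, and that the positive-cone and involution axioms transfer along it --- is a verbatim rerun of the earlier arguments, so I would keep those parts brief.
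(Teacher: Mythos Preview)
Your proposal is correct and follows essentially the same route as the paper: model-completeness from Proposition~\ref{KT-eq}, then the cotheory condition via tensoring with a real closure and invoking Lemma~\ref{ext-simple} and \cite[Proposition~5.8]{au20}. You are in fact more careful than the paper on one point: the paper asserts without comment that $(A\ox_F L,\s\ox\id)$ is a model of $T$, whereas you explicitly verify that scalar extension preserves the type of the involution.
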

\begin{proof}
  We prove it for $T := \OCSASI_{m,\rcf}^+$, the argument is the same for the
  other two.  Let $T_0$ be this theory whithout specifying that $\lF$ is real
  closed. We know that $T$ is model-complete by Proposition~\ref{KT-eq}, so we
  only have to show that $T$ and $T_0$ are cotheories: Let $(A,\s)$ be a
  central simple algebra with involution over $F$ that is a model of $T_0$, so
  that there is a positive cone $\CP$ on $(A,\s)$. Let $L$ be a real closure of
  $F$ at the ordering $\CP_F$. Then $(A \ox_F L, \s \ox \id)$ is a central
  simple algebra with involution over $L$ (see Lemma~\ref{ext-simple}), is
  equipped with a positive cone $\CQ$ containing $\CP \ox 1$
  (\cite[Proposition~5.8]{au20}), and is thus a model of $T$ that contains
  $(A,\s,\CP, \Trd, a)$ as an $L_\CSAI \cup \{\lCP, \lTrd, \la\}$-substructure
  (the reduced trace of an element of $a$ remains the same after scalar
  extension, by definition of the reduced trace).
\end{proof}

\section{Correspondence between positive cones and morphisms}

The model-completeness of $\OCSAI_{m,\rcf}$ makes it interesting to point out that
positive cones on algebras with involution are in bijection with morphisms of
$L_\CSAI$-structures into models of $\OCSAI_{m,\rcf}$.

\begin{lemma}\label{nil}
  Let $(A,\s)$ be a central simple algebra with involution over
  $F$ and let $P \in X_F$. Let $F_P$ be a real closure of $F$ at $P \in X_F$.
  Then:
  \begin{enumerate}
    \item If $\s$ is symplectic, $\Nil[A,\s] = \{P \mid A \ox_F F_P \cong
      M_n(F_P)\}$.
    \item If $\s$ is orthogonal, $\Nil[A,\s] = \{P \mid A \ox_F F_P \cong
      M_n((-1,-1)_{F_P})\}$.
    \item If $\s$ is unitary,
      \begin{align*}
        \Nil[A,\s] = \{P \mid\ & A \ox_F F_P \cong M_n(F_P) \x M_n(F_P) \text{ or
        }\\
        & A \ox_F F_P \cong M_{n/2}((-1,-1)_{F_P}) \ox M_{n/2}((-1,-1)_{F_P})\}.
      \end{align*}
  \end{enumerate}
\end{lemma}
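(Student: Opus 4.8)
The plan is to reduce everything to the structure of the algebra $A \ox_F F_P$ over the real closed field $F_P$, using the fact that $\Nil[A,\s]$ is exactly the set of orderings at which every hermitian form (equivalently, every positive cone) vanishes, and that by Remark~\ref{results-pc}\eqref{results-pc2}, $P \in \Nil[A,\s]$ if and only if there is no positive cone on $(A,\s)$ over $P$. The first step is to observe that positive cones on $(A,\s)$ over $P$ correspond to positive cones on $(A \ox_F F_P, \s \ox \id)$ over the unique ordering of $F_P$: by \cite[Proposition~5.8]{au20} (cf.\ the use in Lemma~\ref{ext-simple}) a positive cone over $P$ extends to one over $F_P$, and conversely restriction of a positive cone over $F_P$ gives a prepositive cone over $P$ (so $P \notin \Nil$); moreover when $\s$ is of the second kind, $\Nil$-membership is governed by whether $d$ becomes a square in $F_P$ (\cite[Proposition~8.4]{au20}), which is precisely whether $A \ox_F F_P$ stays simple. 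So it suffices to decide, for each of the finitely many possible isomorphism types of $A \ox_F F_P$ as an $F_P$-algebra with involution, whether it carries a positive cone over the unique ordering of $F_P$.

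The second step is the case analysis over $F_P$ real closed. If $\s$ is of the first kind, then $A$ stays central simple after scalar extension, so $A \ox_F F_P$ is $M_r(F_P)$ or $M_s((-1,-1)_{F_P})$, and the extended involution $\s \ox \id$ has the same type as $\s$ by \cite[Proposition~2.7(3)]{BOI} (scaling by a symmetric element preserves type). By Lemma~\ref{kind-rcf}, an orthogonal involution over $F_P$ forces $A \ox_F F_P \cong M_r(F_P)$ with $\s \ox \id$ equal to $\Int(a) \circ {}^t$; such an algebra always carries the positive cone $\PSD$ (Remark~\ref{results-pc}\eqref{results-pc3}), hence $P \notin \Nil$ — unless the other possibility occurs, namely $A \ox_F F_P \cong M_s((-1,-1)_{F_P})$, where the involution is symplectic, contradicting that $\s \ox \id$ is orthogonal. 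The point is that exactly one of the two shapes is compatible with the given type of $\s$, and the compatible shape always admits a positive cone while the \emph{incompatible} split behaviour is what makes $\Nil$ nonempty: here I would invoke the computation (as in the proof of Proposition~\ref{pos-invo}) that over a real closed field the signature of $\qf{1}_{\vt^t}$ has absolute value $n_P = n$, so $\vt^t$ — and hence $\Int(a)\circ\vt^t$ for $a$ symmetric invertible — is never nil. Conversely, for $\s$ orthogonal the \emph{only} way to get $P \in \Nil[A,\s]$ is for the signature to vanish, and since the compatible orthogonal shape $M_r(F_P)$ always has nonzero signature, the nilness must come from $A \ox_F F_P$ realizing the symplectic shape $M_n((-1,-1)_{F_P})$ — on which an orthogonal involution cannot exist at all, so there is simply no positive cone of the right kind. (One must be slightly careful with indices: write $\deg A = m$; then $A \ox_F F_P \cong M_m(F_P)$ in the "orthogonal" shape and $\cong M_{m/2}((-1,-1)_{F_P})$ in the "symplectic" shape; I will use the statement's indexing $n$ consistently with $\deg A$.)

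The third step handles the unitary case. Here $\s \ox \id$ is of the second kind over $F_P$, so $Z(A \ox_F F_P) = F_P(\sqrt{d})$ for the $d$ defining $Z(A)$; if $d \notin P$ then $F_P(\sqrt d)$ is a genuine field $F_P(\sqrt{-1})$ and $A \ox_F F_P \cong M_r(F_P(\sqrt{-1}))$ carries $\PSD$, so $P \notin \Nil$. If $d \in P$ then $\sqrt d \in F_P$ and, as in Remark~\ref{star}, $A \ox_F F_P$ splits as a product $(A \ox_{Z(A)} F_P) \times (A \ox_{Z(A)} F_P)$ with the involution swapping the two factors; the factor $A \ox_{Z(A)} F_P$ is central simple over the real closed $F_P$, hence $M_r(F_P)$ or $M_s((-1,-1)_{F_P})$, giving the two alternatives in the statement, and an involution that swaps two factors admits no positive cone (any symmetric element has the form $(x, \ovl{\s}(x))$ and $\CP \cap -\CP = \{0\}$ together with (P3) forces $\CP = \{0\}$, so $P \in \Nil$); this is exactly \cite[Proposition~8.4]{au20} restated geometrically. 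The main obstacle I anticipate is bookkeeping the degrees and the precise form of the involution on the split products (ensuring the ``$\ox$'' in the statement is really a direct product and that the listed matrix sizes match $\deg A$), together with citing the right statements from \cite{au20} — \cite[Proposition~6.6]{au20}, \cite[Proposition~8.4]{au20}, \cite[Proposition~5.8]{au20} — to move between "no positive cone over $P$", "$d \in P$", and "the scalar extension is not simple"; the representation-theoretic content itself is routine given Lemma~\ref{kind-rcf} and Remark~\ref{results-pc}.
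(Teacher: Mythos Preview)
Your approach is substantially different from the paper's. The paper's proof is essentially a citation: it observes that the description in the lemma \emph{is} the original definition of $\Nil[A,\s]$ given in \cite[Definition~3.7]{au14} (with one missing unitary case supplied in \cite[p.~499]{au15}), and that its equivalence with the characterisation used elsewhere in this paper (orderings at which all hermitian-form signatures vanish, equivalently where no positive cone exists) is precisely \cite[Theorem~6.1]{au14}. Nothing new is proved; two definitions already known to coincide are reconciled.

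Your plan tries to re-derive this equivalence directly via positive cones, which is a reasonable programme, but Step~2 contains a genuine error. You write, citing Lemma~\ref{kind-rcf}, that ``an orthogonal involution over $F_P$ forces $A \ox_F F_P \cong M_r(F_P)$'' and later that on $M_n((-1,-1)_{F_P})$ ``an orthogonal involution cannot exist at all''. Both claims are false. Lemma~\ref{kind-rcf} separates involutions by \emph{kind} (first versus second), not by \emph{type} (orthogonal versus symplectic). The algebra $M_n((-1,-1)_{F_P})$ does carry orthogonal involutions --- for instance $\Int(iI_n) \circ \bar{\ }^t$, since $iI_n$ is skew for $\bar{\ }^t$ and scaling by a skew element flips the type (\cite[Proposition~2.7]{BOI}); likewise $M_{2n}(F_P)$ carries symplectic involutions. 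What you actually need is that an orthogonal involution on $M_n((-1,-1)_{F_P})$ (respectively a symplectic involution on $M_n(F_P)$) admits no positive cone, i.e.\ that every hermitian form over it has signature zero. That is exactly the nontrivial content of \cite[Theorem~6.1]{au14}, and it is not covered by the existence results you invoke from Remark~\ref{results-pc} or Proposition~\ref{pos-invo}, which only treat the standard involutions $\vt^t$ for the three specific $(D,\vt)$. Your Step~3 for the unitary case is on firmer ground --- the split/non-split dichotomy governed by whether $d \in P$ is correct and \cite[Proposition~8.4]{au20} is the right reference --- though the claim that the exchange involution on a product admits no positive cone would still need a proper argument.
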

\begin{proof}
  The original definition of $\Nil[A,\s]$ (\cite[Definition~3.7]{au14}) is the
  list given in the Lemma (where the case $M_{n/2}((-1,-1)_{F_P}) \ox
  M_{n/2}((-1,-1)_{F_P})$ for $\s$ unitary was missing, an omission corrected in
  \cite[p.~499]{au15}). The fact that this original definition coincides with the one
  used in this paper (the set of orderings at which the signatures of all
  hermitian forms are zero) is \cite[Theorem~6.1]{au14}.
\end{proof}

\begin{prop}\label{one-direction}
  Let $(A,\s) \models \CSAI_{m}$, $(B,\tau) \models \OCSAI_{m,\rcf}$ and let $f : A
  \rightarrow B$ be a morphism of $L_\CSAI$-structures, i.e., $f$ is a morphism
  of rings with involution such that $f(\lF^A) \subseteq \lF^B$. Then
  $f^{-1}(\lCP^B)$ is a positive cone on $(A,\s)$ over $\lP^A$.
\end{prop}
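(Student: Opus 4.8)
The plan is to verify directly that $\CP := f^{-1}(\lCP^B)$ satisfies the five axioms (P1)--(P5) of a prepositive cone, together with the condition $\CP_F = \lP^A$. Throughout, write $P := \lP^B$ for the (unique) ordering on $\lF^B$ coming from the real closed field $\lF^B$; note that $\lCP^B$ is a positive cone on $(B,\tau)$ over $P$. The key point for several axioms is simply that $f$ is a morphism of rings with involution: for $x \in A$ we have $f(\s(x)) = \tau(f(x))$, and $f$ is additive and multiplicative. Hence $\CP \subseteq \Sym(A,\s)$ follows from $\lCP^B \subseteq \Sym(B,\tau)$ and the fact that $f(\s(x)) = \tau(f(x)) = f(x)$ whenever $x \in \CP$; the axiom (P2) ($\CP + \CP \subseteq \CP$) is immediate from additivity of $f$ and (P2) for $\lCP^B$; and (P3) ($\s(a)\,\CP\,a \subseteq \CP$ for every $a \in A$) follows because $f(\s(a) p a) = \tau(f(a)) f(p) f(a) \in \lCP^B$ by (P3) for $\lCP^B$. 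Axiom (P5) ($\CP \cap -\CP = \{0\}$): if $x \in \CP \cap -\CP$ then $f(x) \in \lCP^B \cap -\lCP^B = \{0\}$, so $f(x) = 0$; but $f$ is injective (a morphism of rings with involution from a central simple algebra is injective because its kernel is a two-sided ideal, and $A$ is simple), hence $x = 0$. Axiom (P1) ($\CP \neq \varnothing$) is clear since $0 \in \CP$.

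The remaining work is to check (P4), namely that $\CP_F := \{u \in \lF^A \mid u\CP \subseteq \CP\}$ is an ordering on $\lF^A$, and in fact to identify it with $\lP^A$. Here I would argue as follows. First, observe that $f$ restricts to a field embedding $f_0 : \lF^A \to \lF^B$, and since $\lF^B$ is real closed its unique ordering $P$ pulls back along $f_0$ to an ordering $f_0^{-1}(P) =: P_A$ on $\lF^A$, consisting exactly of the elements of $\lF^A$ whose image under $f_0$ is a square in $\lF^B$ — equivalently (as $P = (\lF^B)^2$) of the elements that are themselves squares in $\lF^A$ after going up, i.e. $P_A = \{u \in \lF^A \mid f_0(u) \in (\lF^B)^2\}$. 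The claim is that $\CP_F = P_A$, and that this is an ordering of $\lF^A$. That $P_A$ is an ordering is standard (pullback of an ordering along a field embedding). To show $\CP_F = P_A$: if $u \in P_A$, pick any $p \in \CP$; then $f(up) = f_0(u) f(p)$, and since $f_0(u) \in P = \lCP^B_F$ (because $\lCP^B$ is over $P$) we get $f_0(u) f(p) \in \lCP^B$, so $up \in \CP$ — hence $u \in \CP_F$, using that this works for all $p \in \CP$. Conversely, if $u \in \CP_F$ but $u \notin P_A$, then $-u \in P_A$, so by what we just proved $-u \in \CP_F$; picking a nonzero $p \in \CP$ (we must exhibit one — see below), we get $up, -up \in \CP$, contradicting (P5). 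Finally $\CP_F = P_A$ is an ordering, establishing (P4), and since the statement claims $\CP$ is a positive cone \emph{over} $\lP^A$ we note $\lP^A$ is by assumption the given ordering on $\lF^A$ and $f_0 : (\lF^A, \lP^A) \hookrightarrow (\lF^B, P)$ is an order embedding (any field embedding into a real closed field is automatically order-preserving for the unique orderings, provided $\lP^A$ is \emph{the} ordering making $f_0$ monotone — here $(A,\s) \models \CSAI_m$ only, so $\lF^A$ need not be formally real a priori; but the existence of the embedding $f_0$ into the real closed $\lF^B$ forces $\lF^A$ to be formally real, and $\lP^A = P_A$).

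The main obstacle I anticipate is the last point: showing $\CP \neq \varnothing$ in the strong sense of containing a \emph{nonzero} element, which is what makes the (P5)-based half of the (P4) argument go through, and more fundamentally ensuring that $\CP_F$ is genuinely an ordering rather than possibly all of $\lF^A$ (which would happen if $\CP = \{0\}$). To handle this I would use that $\lCP^B$, being a positive cone on a central simple algebra over a real closed field, contains $a\,\HS(B,\tau)$ up to sign (Corollary~\ref{bar-tr} and Remark~\ref{results-pc}), in particular contains hermitian squares $\tau(b)b$; pulling back, for any $x \in A$ we'd like $\s(x)x \in \CP$, i.e. $f(\s(x)x) = \tau(f(x))f(x) \in \lCP^B$ — but this requires $1 \in \lCP^B$ (or at least that hermitian squares lie in $\lCP^B$), which need not hold since positive cones need not contain $1$. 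The correct fix is to use that $\lCP^B$ contains \emph{some} element of the form $a \cdot \tau(b)b$ with $a$ the fixed invertible symmetric element, so $\CP$ contains $f^{-1}$ of that element; more robustly, one invokes \cite[Lemma~3.6]{au20} (cited in Definition~\ref{theories}) that every positive cone contains an invertible element $c$, and then $c = \tau(c')c' \cdot (\text{something})$... — actually the clean statement is simply that $\CP \neq \{0\}$ because $f^{-1}(\lCP^B) \ni f^{-1}(c)$ for any $c$ in the image of $f$ lying in $\lCP^B$, and since $\lCP^B$ is large (it contains $a\,\PSD$ after the identification of Corollary~\ref{bar-tr}, hence meets $f(A)$ nontrivially as $f(A)$ is a nonzero subalgebra). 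I would state this carefully: pick the invertible symmetric $c \in \lCP^B$ guaranteed by \cite[Lemma~3.6]{au20}; its preimage issue is that $c$ may not lie in $f(A)$. The genuinely safe route, which I'd adopt in the writeup, is: $\CP_F = P_A$ is an ordering \emph{provided} $\CP \neq \{0\}$, and separately, were $\CP = \{0\}$ we would still be claiming it is a positive cone over $\lP^A$, which contradicts (P1)-style nondegeneracy — so I would instead directly show $\CP$ contains a nonzero element by taking any nonzero $x \in A$ and noting $f(\s(x)x)$ is a nonzero hermitian square in $B$, which after scaling by the invertible $a^{\CB} \in \lCP^B$ gives $a \cdot \tau(f(x))f(x) \in \lCP^B$ by (P3), and $a \cdot \tau(f(x))f(x) = f(a^{\CA}\s(x)x)$ since $f(a^{\CA}) = a^{\CB}$; hence $a^{\CA}\s(x)x \in \CP$ and is nonzero by injectivity of $f$. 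With a nonzero element in hand, the (P4)/(P5) argument above completes the proof.
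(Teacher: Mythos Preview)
Your verification of (P1)--(P5) is essentially correct and in fact appears verbatim in the paper's proof (the paper remarks that $f^{-1}(\CQ)$ ``is easily seen to be a prepositive cone''). However, there is a genuine gap: you never address \emph{maximality}. A positive cone is by definition a prepositive cone that is maximal with respect to inclusion (Definition~\ref{def-preordering}), and establishing (P1)--(P5) only shows that $f^{-1}(\lCP^B)$ is a \emph{prepositive} cone. Nothing in your argument rules out the existence of a strictly larger prepositive cone on $(A,\s)$; and there is no obvious direct argument, since for $a \in \Sym(A,\s)$ the condition $f(a) \notin \lCP^B$ does not force $f(a) \in -\lCP^B$ (e.g.\ indefinite symmetric matrices in $M_n(\R)$ lie in neither $\PSD$ nor $-\PSD$).

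The paper handles this by a completely different route: it first proves that the pullback ordering $P$ on $\lF^A$ lies outside $\Nil[A,\s]$ (a nontrivial step requiring a case split on the kind of the involution, with the second-kind case needing a separate argument about $\sqrt{d}$ to rule out the situation of Remark~\ref{star}). This guarantees the existence of \emph{some} positive cone $\CP$ on $(A,\s)$ over $P$. One then extends scalars to $L$, so that $f$ induces an isomorphism $f' : (A \ox_F L, \s \ox \id) \to (B,\tau)$, pushes $\CP$ forward via \cite[Proposition~5.8]{au20}, and uses Remark~\ref{results-pc}\eqref{results-pc4} (the only positive cones on $(B,\tau)$ are $\pm\lCP^B$) to get $f(\CP) \subseteq \lCP^B$ after possibly replacing $\CP$ by $-\CP$. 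Then $\CP \subseteq f^{-1}(\lCP^B)$, and since $\CP$ is maximal and $f^{-1}(\lCP^B)$ is a prepositive cone, they coincide --- which simultaneously proves maximality of $f^{-1}(\lCP^B)$ and supplies the nonzero element you were looking for.

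A minor secondary issue: your final paragraph invokes $a^\CA$ and $a^\CB$ for the constant $\la$, but $\la$ is not in the language $L_\OCSAI = L_\CSAI \cup \{\lCP\}$; it only appears in the theories $\OCSAI^{+}_{m,\rcf}$ etc.\ of Definition~\ref{theories}, so that line of argument is not available here.
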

\begin{proof}
  We write $(F, P) := (\lF^A, \lP^A)$ and $L := \lF^B$, with $Q$ the unique
  ordering on $L$.  We first prove that $P \not \in \Nil[A,\s]$. For this we
  consider two cases:

  (1) If $\s$ is of the first kind, i.e., $F = Z(A)$. 
  We extend $f$ to $f' : A \ox_F L \rightarrow B$, $f'(a \ox
  \ell) = f(a) \ell$ (using the action of $F$ on $L$ via $f$ for the tensor
  product). Since $F = Z(A)$, $A \ox_F L$ is simple, and $f'$ is
  injective and therefore bijective (recall that $\dim_L A \ox_F L = \dim_F A = 
  \dim_L B$). Since $f'$ is easily seen to respect the involutions, $f'$ is an
  isomorphism of algebras with involution from $(A \ox_F L, \s \ox \id)$ to
  $(B, \tau)$.
  Therefore, by Lemma~\ref{nil}, and since $Q \not \in \Nil[B,\tau]$, we have $P
  \not \in \Nil[A,\s]$.

  (2) If $\s$ is of the second kind, i.e., $Z(A) = F(\sqrt{d})$ for some $d \in
  F$. Therefore $\s(\sqrt{d})= -\sqrt{d}$, and thus $\tau(f(\sqrt{d})) =
  -f(\sqrt{d})$. Assume that $P \in \Nil[A,\s]$, so that $d \in P$
  (\cite[Proposition~8.4]{au20}). Since $\sqrt{d}$ is invertible in $A$,
  $f(\sqrt{d})$ is invertible in $B$. Moreover $f(\sqrt{d})^2 = f(d) \in L$.
  Since $f$ is a morphism of ordered fields from $(F,P)$ to $(L,Q)$ real closed, we have
  $f(\sqrt{d}) \in Q$ and there is $\alpha \in L$ such that
  $\alpha^2 = f(d)$. Therefore, in the field $L(f(\sqrt{d}))$ the elements $\alpha,
  -\alpha, f(\sqrt{d})$ are roots of $X^2-f(d)$, so that $f(\sqrt{d}) = \pm\alpha
  \in L$ and thus $\tau(f(\sqrt{d})) = f(\sqrt{d})$, contradiction.\medskip

  Since $P \not \in \Nil[A,\s]$, there is a positive cone $\CP$ on $(A,\s)$ over
  $P$, and $A \ox_F L$ is simple by Lemma~\ref{ext-simple}. Going back to the
  argument presented in (1) above, the map $f'$ is then an isomorphism of
  algebras with involutions, even if $\s$ is of the second kind.

  By \cite[Proposition~5.8]{au20} there is a positive cone on $(A \ox_F
  L, \s \ox \id)$ over $Q$ containing $\CP \ox 1$ (the Proposition is written
  for an inclusion of fields, but applies also here: replace $A$ by $A \ox_F
  f(F)$, then use the inclusion from $f(F)$ into $L$), and thus there is
  a positive cone $\CS$ on $(B,\tau)$ over $Q$ such that $f'(\CP \ox 1) \subseteq
  \CS$. Since $\CQ$ and $-\CQ$ are the only positive cones on $(B,\tau)$ over
  $Q$, up to replacing $\CP$ by $-\CP$ we must have $f'(\CP \ox 1) \subseteq
  \CQ$. It follows that $\CP \subseteq f^{-1}(\CQ)$, and thus that $\CP =
  f^{-1}(\CQ)$ since $\CP$ is a positive cone and $f^{-1}(\CQ)$ is easily seen
  to be a prepositive cone ((P1), (P2), (P3) are clear, (P5) holds since $f$ is
  injective because $A$ is simple, and for (P4) it suffices to check that
  $P \subseteq (f^{-1}(\CQ))_F$, the other inclusion following from (P5)).
\end{proof}

Conversely, every positive cone on $(A,\s)$ can be obtained in this way:
\begin{prop}
  Let $(A,\s) \models \CSAI_{m}$ and let $\CP$ be a positive cone on $(A,\s)$ over $P
  \in X_{\lF^\CA}$.
  Then there is a model $(B,\tau)$ of $\OCSAI_{m,\rcf}$ and a morphism of
  $L_\CSAI$-structures $f : (A,\s) \rightarrow (B,\tau)$ such that $\CP =
  f^{-1}(\lCP^B)$.
\end{prop}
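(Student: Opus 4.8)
The plan is to realise $(A,\s,\CP)$ inside a suitable scalar extension whose base field is real closed, running in the opposite direction the construction used in the proof of Proposition~\ref{one-direction}. Write $F:=\lF^\CA$, let $F_P$ be a real closure of $(F,P)$, and set $(B,\tau):=(A\ox_F F_P,\ \s\ox\id)$ together with the canonical inclusion $f:A\to B$, $a\mapsto a\ox 1$.

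First I would upgrade $(B,\tau)$ to a model of $\OCSAI_{m,\rcf}$. Since $\CP$ is a positive cone on $(A,\s)$ over $P$, Lemma~\ref{ext-simple} shows that $B$ is a central simple algebra; a short check (the same one underlying Lemma~\ref{ext-simple}, using that $\sqrt d\notin F_P$ when $\s$ is of the second kind with $Z(A)=F(\sqrt d)$, because $d\notin P$) shows that $\tau=\s\ox\id$ is an involution with $F_P=Z(B)\cap\Sym(B,\tau)$ and $\dim_{F_P}B=\dim_F A=m$, so $(B,\tau)\models\CSAI_{m,\rcf}$. By \cite[Proposition~5.8]{au20} there is a positive cone $\CQ$ on $(B,\tau)$ over the unique ordering $F_P^2$ with $\CP\ox 1\subseteq\CQ$; interpreting $\lCP^B:=\CQ$ makes $(B,\tau)\models\OCSAI_{m,\rcf}$. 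The map $f$ is an injective homomorphism of rings with involution carrying $F$ into $F_P$, hence a morphism of $L_\CSAI$-structures.

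It then remains to identify $\CP$ with $f^{-1}(\lCP^B)=f^{-1}(\CQ)$. The inclusion $\CP\subseteq f^{-1}(\CQ)$ is immediate from $f(\CP)=\CP\ox 1\subseteq\CQ$. For the reverse inclusion I would argue exactly as at the end of the proof of Proposition~\ref{one-direction}: one has $f^{-1}(\CQ)\subseteq\Sym(A,\s)$, and this set satisfies (P1), (P2), (P3) directly and (P5) because $f$ is injective; for (P4) one checks $P\subseteq(f^{-1}(\CQ))_F$ (since $f$ sends $P$ into $F_P^2=\CQ_{F_P}$), while properness, hence equality with $P$, follows from (P5) together with the fact that $f^{-1}(\CQ)$ contains the nonzero set $\CP$. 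Thus $f^{-1}(\CQ)$ is a prepositive cone on $(A,\s)$ over $P$ containing the positive cone $\CP$, so by maximality $\CP=f^{-1}(\CQ)=f^{-1}(\lCP^B)$.

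I do not expect a real obstacle here: the only points requiring some care are the verification that $(B,\tau)$ is a model of $\CSAI_{m,\rcf}$ in the second-kind case — which is precisely where one uses that $P\notin\Nil[A,\s]$, a consequence of the existence of $\CP$ (Remark~\ref{results-pc}\eqref{results-pc2}) — and the remark that \cite[Proposition~5.8]{au20} applies to the inclusion induced by $F\hookrightarrow F_P$; both of these are already handled in the proof of Proposition~\ref{one-direction}.
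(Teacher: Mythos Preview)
Your proposal is correct and follows essentially the same route as the paper: tensor up to a real closure $F_P$, use Lemma~\ref{ext-simple} to see the result is central simple, extend $\CP$ to a positive cone $\CQ$ on $(A\ox_F F_P,\s\ox\id)$ via \cite{au20}, and conclude $\CP=f^{-1}(\CQ)$ by noting the preimage is a prepositive cone containing the maximal $\CP$. The only cosmetic differences are that you supply a bit more detail on the verification that $(B,\tau)\models\CSAI_{m,\rcf}$ and on axiom (P4), and you cite \cite[Proposition~5.8]{au20} where the paper cites the adjacent \cite[Proposition~5.9]{au20}.
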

\begin{proof}
  Let $F := \lF^A$, and take for $f$ the canonical map $A \rightarrow A \ox_F
  F_P$, $a \mapsto a \ox 1$, where $F_P$ is a real closure of $F$ at $\lP^A$.
  The $F_P$-algebra $A \ox_F F_P$ is central simple by Lemma~\ref{ext-simple}
  and by \cite[Proposition~5.9]{au20} there is a positive cone $\CQ$ on $(A
  \ox_F F_P, \s \ox \id)$ over the unique ordering of $F_P$, so that $f(\CP)
  \subseteq \CQ$. Let $\CB$ be the natural $L_{\CSAI}$-structure on $(A \ox_F
  F_P, \s \ox \id)$, in which we also interpret the symbol $\lCP$ by $\CQ$, thus
  turning $(B,\tau)$ into a model of $\OCSAI_{m,\rcf}$.

  We check that $\CP = f^{-1}(\CQ)$. By construction, we have $\CP \subseteq
  f^{-1}(\CQ)$. It is easy to see that $f^{-1}(\CQ)$ is a prepositive cone on
  $(A,\s)$ over $P$. Since $\CP$ is a positive cone on $(A,\s)$ over $P$, we
  must have $\CP = f^{-1}(\CQ)$.
\end{proof}

\bibliographystyle{plain}

\def\cprime{$'$}

\end{document}